\newcommand{\citep}[1]{\cite{#1}}
\newtheorem{thm}{Theorem}
\newtheorem{prop}[thm]{Proposition}
\newtheorem{defi}[thm]{Definition}
\newtheorem{lem}[thm]{Lemma}
\newcommand{\ignore}[1]{}
\newcommand{\vrho}{\varrho}
\newcommand{\vepsilon}{\varepsilon}
\newcommand{\vphi}{\varphi}
\newcommand{\vek}[1]{\mathchoice{\displaystyle\boldsymbol{#1}}
{\textstyle\boldsymbol{#1}}{\scriptstyle\boldsymbol{#1}}
{\scriptscriptstyle\boldsymbol{#1}}}
\newcommand{\tnb}[1]{\mathchoice{\displaystyle\mathboldsans{#1}}
{\textstyle\mathboldsans{#1}}{\scriptstyle\mathboldsans{#1}}
{\scriptscriptstyle\mathboldsans{#1}}}
\newcommand{\tns}[1]{\mathchoice{\displaystyle\mathsans{#1}}
{\textstyle\mathsans{#1}}{\scriptstyle\mathsans{#1}}
{\scriptscriptstyle\mathsans{#1}}}
\newcommand{\vtil}[1]{\vek{\tilde{#1}}}
\newcommand{\vhat}[1]{\vek{\hat{#1}}}
\newcommand{\That}[1]{\tnb{\hat{#1}}}
\newcommand{\EXP}[1]{\mathbb{E}\left(#1\right)}
\newcommand{\dlangle}{\langle\negthinspace\langle}
\newcommand{\drangle}{\rangle\negthinspace\rangle}
\newcommand{\divg}{\mathop{\mathrm{div}}\nolimits}
\newcommand{\spn}{\mathop{\mathrm{span}}\nolimits}
\newcommand{\dd}{\partial}
\newcommand{\di}{\mathrm{d}}
\newcommand{\ii}{\mathchoice{\displaystyle\mathrm i}
{\textstyle\mathrm i}{\scriptstyle\mathrm i}
{\scriptscriptstyle\mathrm i}}
\newcommand{\ip}[2]{\langle #1, #2 \rangle}
\newcommand{\bkt}[2]{\langle #1 | #2 \rangle}
\newcommand{\ipj}[1]{\langle #1 \rangle}
\newcommand{\ipd}[2]{\dlangle #1, #2 \drangle}
\newcommand{\bkd}[2]{\dlangle #1 | #2 \drangle}
\newcommand{\ns}[1]{| #1 |}
\newcommand{\nd}[1]{\| #1 \|}
\newcommand{\Hf}[2]{\tensor[^#1]{#2}{}}
\newcommand{\Lperp}{\mathop{\underline{\perp}}\nolimits}
\newcommand{\stindep}{\mathop{\perp\negthickspace\negthickspace\perp}\nolimits}
\definecolor{myred}{rgb}{1, 0.2, 0.2}
\newcommand{\authorhgm}{Hermann G. Matthies}
\newcommand{\authoral}{Alexander Litvinenko}
\newcommand{\authorbr}{Bojana V. Rosi\'c}
\newcommand{\authorez}{Elmar Zander}
\newcommand{\authorop}{Oliver Pajonk}
\newcommand{\affilwire}{Institute of Scientific Computing \authorcr
                        Technische Universit\"at Braunschweig}
\newcommand{\thetitle}{Inverse Problems in a Bayesian Setting}
\newcommand{\theauthor}{\authorhgm\thanks{corresponding author}, \authorez,
            \authorbr, \authorcr \authoral, \authorop}
\newcommand{\thesubject}{---(MSC2010) 62F15, 65N21, 62P30, 60H15, 60H25, 74G75,
            80A23, 74C05\\ 
            ---(PACS2010) 46.65.+g, 46.35.+z, 44.10.+i\\
            ---(ACM1998) G.1.8, G.3, J.2}
\newcommand{\thekeywords}{inverse identification, uncertainty
  quantification, Bayesian update, parameter identification,
  conditional expectation, filters, functional and spectral approximation}
\newcommand{\textdate}{20th October 2015}
\begin{document}

\title{\thetitle\thanks{Partly supported by the Deutsche
          Forschungsgemeinschaft (DFG) through SFB 880.}}

\author{\theauthor}

\affil{\affilwire}

\date{\textdate}


\ignore{          


\setcounter{page}{0}
\thispagestyle{empty}
\begin{center} {\bf \Large This page intentionally left blank }\end{center}
\cleardoublepage

\include{titlepage}

\newpage

\thispagestyle{empty}
\vspace*{\stretch{2}}

\begin{flushleft}
\begin{tabular}{ll}
\makeatletter
This document was created \textdate{} using \LaTeXe. \\[1cm]
\makeatother
\end{tabular}

\begin{tabular}{ll}
\begin{minipage}{6cm}
Institute of Scientific Computing\\ 
Technische Universit\"at Braunschweig\\
Hans-Sommer-Stra\ss{}e 65\\
D-38106 Braunschweig, Germany\\

\texttt{url: \url{www.wire.tu-bs.de}}\\
\makeatletter
\texttt{mail: \href{mailto:wire@tu-bs.de?subject=\thetitle}{wire@tu-bs.de}}
\makeatother
\end{minipage}
&
\begin{minipage}{2.5cm}
\vspace{-0.5cm}
\includegraphics[scale=0.34]{common/logo_wire}

\end{minipage}
\end{tabular}

\vspace*{\stretch{1}}

Copyright \copyright{} by \theauthor{}\\[5mm]
\end{flushleft}

This work is subject to copyright. All rights are reserved, whether the whole or part of the material is concerned, specifically the rights of translation, reprinting, reuse of illustrations, recitation, broadcasting, reproduction on microfilm or in any other way, and storage in data banks. Duplication of this publication or parts thereof is permitted in connection with reviews or scholarly analysis. Permission for use must always be obtained from the copyright holder.\\[5mm]

Alle Rechte vorbehalten, auch das des auszugsweisen Nachdrucks, der auszugsweisen oder vollständigen Wiedergabe (Photographie, Mikroskopie), der Speicherung in Datenverarbeitungsanlagen und das der Übersetzung.


}            

\maketitle

%

\begin{abstract}
  In a Bayesian setting, inverse problems and uncertainty
  quantification (UQ) --- the propagation of uncertainty through a
  computational (forward) model --- are strongly connected.  In the
  form of conditional expectation the Bayesian update becomes
  computationally attractive.  We give a detailed account of this
  approach via conditional approximation, various approximations, and
  the construction of filters.  Together with a functional or spectral
  approach for the forward UQ there is no need for time-consuming and
  slowly convergent Monte Carlo sampling.  The developed sampling-free
  non-linear Bayesian update in form of a filter is derived from the
  variational problem associated with conditional expectation.  This
  formulation in general calls for further discretisation to make the
  computation possible, and we choose a polynomial approximation.
  After giving details on the actual computation in the framework of
  functional or spectral approximations, we demonstrate the workings
  of the algorithm on a number of examples of increasing complexity.
  At last, we compare the linear and nonlinear Bayesian update in form
  of a filter on some examples.

\vspace{5mm}
{\noindent\textbf{Keywords:} \thekeywords}

\vspace{5mm}
{\noindent\textbf{Classification:} \thesubject}

\end{abstract}

%
%
%
%
%
%
%
%









%

\section{Introduction}  \label{S:intro}
Inverse problems deal with the determination of parameters in
computational models, by comparing the prediction of these models with
either real measurements or observations, or other, presumably more
accurate, computations.  These parameters can typically not be
observed or measured directly, only other quantities which are somehow
connected to the one for which the information is sought.  But it is
typical that we can compute what the observed response should be,
under the assumption that the unknown parameters have a certain
value.  And the difference between predicted or forecast response is
obviously a measure for how well these parameters were identified.

There are different ways of attacking the problem of parameter
identification theoretically and numerically.  One way is to define
some measure of discrepancy between predicted observation and the
actual observation.  Then one might use optimisation algorithms to
make this measure of discrepancy as small as possible by changing the
unknown parameters.  Classical least squares approaches start from this
point.  The parameter values where a minimum is attained is then
usually taken as the `best' value and regarded as close to the `true'
value.

One of the problems is that for one the measure of discrepancy crops
up pretty arbitrarily, and on the other hand the minimum is often not
unique.  This means that there are many parameter values which explain
the observations in a `best' way.  To obtain a unique solution, some
kind of `niceness' of the optimal solution is required, or
mathematically speaking, for the optimal solution some regularity is
enforced, typically in competition with discrepancy measure to be
minimised.  This optimisation approach hence leads to regularisation
procedures, a good overview of which is given by \citep{Engl2000}.
 
Here we take another tack, and base our approach on the Bayesian idea
of updating the knowledge about something like the unknown parameters
in a probabilistic fashion according to Bayes's theorem.  In order to
apply this, the knowledge about the parameters has to be described in
a \emph{Bayesian} way through a probabilistic model \citep{jaynes03},
\citep{Tarantola2004}, \citep{Stuart2010}.  As it turns out, such a
probabilistic description of our previous knowledge can often be
interpreted as a regularisation, thus tying these differing approaches
together.

The Bayesian way is on one hand difficult to tackle, i.e.\ finding a
computational way of doing it; and on the other hand often becomes
computationally very demanding.  One way the Bayesian update may be
achieved computationally is through sampling. On the other hand, we
shall here use a functional approximation setting to address such
stochastic problems.  See \citep{boulder:2011} for a synopsis on our
approach to such parametric problems.


It is well-known that such a Bayesian update is in fact closely
related to \emph{conditional expectation} \citep{Bobrowski2006/087},
\citep{Goldstein2007}, and this will be the basis of the method
presented.  For these and other probabilistic notions see for example
\citep{Papoulis1998/107} and the references therein.

The functional approximation approach towards stochastic problems is
explained e.g.\ in \citep{matthies6}.  These approximations are in the
simplest case known as Wiener's so-called \emph{homogeneous} or
\emph{polynomial chaos} expansion \citep{Wiener1938}, which are
polynomials in independent Gaussian RVs --- the `chaos' --- and which
can also be used numerically in a Galerkin procedure
\citep{ghanemSpanos91}, \citep{matthiesKeese05cmame},
\citep{matthies6}.  This approach has been generalised to other types
of RVs \citep{xiuKarniadakis02a}.  It is a computational variant of
\emph{white noise analysis}, which means analysis in terms of
independent RVs, hence the term `white noise' \citep{holdenEtAl96},
\citep{Janson1997}, \citep{hida}, see also \citep{matthiesKeese05cmame},
\citep{Roman_Sarkis_06}, and \citep{GalvisSarkis:2012} for here relevant
results on stochastic regularity.  Here we describe a computational
extension of this approach to the inverse problem of Bayesian
updating, see also \citep{opBvrAlHgm12}, \citep{bvrAlOpHgm12-a}, \citep{OpBrHgm12},
\citep{BvrAkJsOpHgm11}.

To be more specific, let us consider the following situation:
we are investigating some physical system which
is modelled by an evolution equation for its state: 
\begin{equation}  \label{eq:I}
      \frac{\di}{\di t}u = A(q;u(t)) + \eta(q;t); \qquad u(0) = u_a\text{ given} .
\end{equation}
where $u(t) \in \C{U}$ describes the state of the system
at time $t \in [0,T]$ lying in a Hilbert space $\C{U}$ (for the sake of
simplicity), $A$ is a---possibly non-linear---operator modelling
the physics of the system, and $\eta\in\C{U}^*$ is some external
influence (action / excitation / loading).  Both $A$ and $\ell$ may
involve some \emph{noise} --- i.e.\ a random process --- so that
\eqref{eq:I} is a stochastic evolution equation.

Assume that the model depends on some parameters $q \in \C{Q}$, which
are uncertain.  These may actually include the initial conditions for
the state, $u_a$.  To have a concrete example of \refeq{eq:I},
consider the diffusion equation
\begin{equation} \label{eq:I-c}
\frac{\dd}{\dd t}u(x,t) - \divg (\kappa(x) \nabla u(x,t)) = \eta(x,t),
\quad x\in\C{G},
\end{equation}
with appropriate boundary and initial conditions, where $\C{G} \subset
\D{R}^n$ is a suitable domain.  The diffusing quantity is $u(x,t)$
(heat, concentration) and the term $\eta(x,t)$ models sinks and
sources.  Similar examples will be used for the numerical experiments
in \refS{bayes-lin} and \refS{bayes-non-lin}.  Here $\C{U} =
H^1_E(\C{G})$, the subspace of the Sobolev space $H^1(\C{G})$
satisfying the essential boundary conditions, and we assume that the
diffusion coefficient $\kappa(x)$ is uncertain.  The parameters could
be the positive diffusion coefficient field $\kappa(x)$, but for
reasons to be explained fully later, we prefer to take $q(x) =
\log(\kappa(x))$, and assume $q \in \C{Q} = L_2(\C{G})$.

The updating methods have to be well defined and stable in a
continuous setting, as otherwise one can not guarantee numerical
stability with respect to the PDE discretisation refinement, see
\citep{Stuart2010} for a discussion of related questions.  Due to this
we describe the update before any possible discretisation in the
simplest Hilbert space setting.

On the other hand, no harm will result for the basic understanding if
the reader wants to view the occurring spaces as finite dimensional
Euclidean spaces.  Now assume that we observe a function of the state
$Y(u(q),q)$, and from this observation we would like to identify the
corresponding $q$.  In the concrete example \refeq{eq:I-c} this could
be the value of $u(x_j,t)$ at some points $x_j \in \C{G}$.  This is
called the \emph{inverse} problem, and as the mapping $q\mapsto Y(q)$
is usually not invertible, the inverse problem is \emph{ill-posed}.
Embedding this problem of finding the best $q$ in a larger class by
modelling our knowledge about it with the help of probability theory,
then in a Bayesian manner the task becomes to estimate conditional
expectations, e.g.\ see \citep{jaynes03}, \citep{Tarantola2004},
\citep{Stuart2010}, and the references therein.  The problem now is
\emph{well-posed}, but at the price of `only' obtaining probability
distributions on the possible values of $q$, which now is modelled as
a $\C{Q}$-valued random variable (RV).  On the other hand one
naturally also obtains information about the remaining uncertainty.
Predicting what the measurement $Y(q)$ should be from some assumed $q$
is computing the \emph{forward} problem.  The \emph{inverse} problem
is then approached by comparing the forecast from the forward problem
with the actual information.

Since the parameters of the model to be estimated are uncertain, all
relevant information may be obtained via their stochastic description.
In order to extract information from the posterior, most estimates
take the form of expectations w.r.t.\ the posterior.  These
expectations --- mathematically integrals, numerically to be evaluated
by some quadrature rule --- may be computed via asymptotic,
deterministic, or sampling methods.  In our review of current work we
follow our recent publications \citep{opBvrAlHgm12}, \citep{bvrAlOpHgm12-a},
\citep{OpBrHgm12}, \citep{BvrAkJsOpHgm11}.

One often used technique is a Markov chain Monte Carlo (MCMC) method
\citep{Madras-Fields:2002}, \citep{Gamerman06}, constructed such that the
asymptotic distribution of the Markov chain is the Bayesian posterior
distribution; for further information see \citep{BvrAkJsOpHgm11} and
the references therein.

These approaches require a large number of samples in order to obtain
satisfactory results.  Here the main idea here is to perform the
Bayesian update directly on the polynomial chaos expansion (PCE)
without any sampling \citep{opBvrAlHgm12}, \citep{bvrAlOpHgm12-a},
\citep{boulder:2011}, \citep{OpBrHgm12}, \citep{BvrAkJsOpHgm11}.  This
idea has appeared independently in \citep{Blanchard2010a} in a simpler
context, whereas in \citep{saadGhn:2009} it appears as a variant of
the Kalman filter (e.g.\ \citep{Kalman}).  A PCE for a push-forward of
the posterior measure is constructed in \citep{moselhyYMarz:2011}.

From this short overview it may already have become apparent that the
update may be seen abstractly in two different ways.  Regarding the
uncertain parameters
\begin{equation}  \label{eq:RVq}
q: \Omega \to \C{Q} \text{  as a RV on a probability space   }
  (\Omega, \F{A}, \D{P})
\end{equation}
where the set of elementary events is $\Omega$, $\F{A}$ a
$\sigma$-algebra of events, and $\D{P}$ a probability measure, one set
of methods performs the update by changing the probability measure
$\D{P}$ and leaving the mapping $q(\omega)$ as it is, whereas the
other set of methods leaves the probability measure unchanged and
updates the function $q(\omega)$.  In any case, the push forward
measure $q_* \D{P}$ on $\C{Q}$ defined by $q_* \D{P}(\C{R}) :=
\D{P}(q^{-1}(\C{R}))$ for a measurable subset $\C{R} \subset \C{Q}$ is
changed from prior to posterior.  For the sake of simplicity we assume
here that $\C{Q}$ --- the set containing possible realisations of
$q$ --- is a Hilbert space.  If the parameter $q$ is a RV, then so is
the state $u$ of the system \refeq{eq:I}.  In order to avoid a
profusion of notation, unless there is a possibility of confusion, we
will denote the random variables $q, f, u$ which now take values in
the respective spaces $\C{Q}, \C{U}^*$ and $\C{U}$ with the same
symbol as the previously deterministic quantities in \refeq{eq:I}.

In our overview \citep{BvrAkJsOpHgm11} on spectral methods in
identification problems, we show that Bayesian identification methods
\citep{jaynes03}, \citep{Tarantola2004}, \citep{Goldstein2007},
\citep{Stuart2010} are a good way to tackle the identification
problem, especially when these latest developments in functional
approximation methods are used.  In the series of papers
\citep{bvrAlOpHgm12-a}, \citep{boulder:2011}, \citep{OpBrHgm12},
\citep{BvrAkJsOpHgm11}, Bayesian updating has been used in a
linearised form, strongly related to the Gauss-Markov theorem
\citep{Luenberger1969}, in ways very similar to the well-known Kalman
filter \citep{Kalman}.  These similarities ill be used to construct an
abstract linear filter, which we term the \tbf{Gauss-Markov-Kalman}
filter (GMKF).  This turns out to be a linearised version of
\emph{conditional expectation}.  Here we want to extend this to a
non-linear form, and show some examples of linear (LBU) and non-linear
(QBU) Bayesian updates.

The organisation of the remainder of the paper is as follows: in
\refS{bayes} we review the Bayesian update---classically defined via
conditional probabilities---and recall the link between conditional
probability measures and conditional expectation.  In \refS{char-rv},
first we point out in which way --- through the conditional
expectation --- the posterior measure is characterised by Bayes's
theorem, and we point out different possibilities.  Often, one does
not only want a characterisation of the posterior measure, but
actually an RV which has the posterior measure as push-forward or
distribution measure.  Some of the well-known filtering algorithms
start from this idea.  Again by means of the conditional expectation,
some possibilities of construction such an RV are explored, leading to
`filtering' algorithms.

In most cases, the conditional expectation can not be computed
exactly.  We show how the abstract version of the conditional
expectation is translated into the possibility of real computational
procedures, and how this leads to various approximations, also in
connection with the previously introduced filters.

We show how to approximate the conditional expectation up to any
desired polynomial degree, not only the linearised version
\citep{Luenberger1969}, \citep{Kalman} which was used in
\citep{opBvrAlHgm12}, \citep{bvrAlOpHgm12-a}, \citep{boulder:2011},
\citep{OpBrHgm12}, \citep{BvrAkJsOpHgm11}.  This representation in
monomials is probably numerically not very advantageous, so we
additionally show a version which uses general function bases for
approximation.

The numerical realisation in terms of a functional or spectral
approximations --- here we use the well known Wiener-Hermite chaos ---
is shortly sketched in \refS{num-real}.  In \refS{bayes-lin} we then
show some computational examples with the \emph{linear version (LBU)},
whereas in \refS{bayes-non-lin} we show how to compute with the
non-linear or quadratic (QBU) version.  Some concluding remarks are
offered in \refS{concl}.

%
%
%
%
%
%
%
%
%
%
%
%
%
%
%
%


%

\section{Bayesian Updating} \label{S:bayes}
Here we shall describe the frame in which we want to treat the problem
of Bayesian updating, namely a dynamical system with time-discrete
observations and updates.  After introducing the setting in
\refSS{bayes-setting}, we recall Bayes's theorem in
\refSS{bayes-laplace-thm} in the formulation of Laplace, as well as
its formulation in the special case where densities exist, e.g.\
\citep{Bobrowski2006/087}.  The next \refSS{cond-expect} treats the
more general case and its connection with the notion of
\emph{conditional expectation}, as it was established by Kolmogorov,
e.g.\ \citep{Bobrowski2006/087}.  This notion will be the basis of our
approach to characterise a RV which corresponds to the posterior measure.

\subsection{Setting} \label{SS:bayes-setting}
In the setting of \refeq{eq:I} consider the following problem:
one makes observations $y_n$ at times
$0 < t_1 < \dots < t_n \dots \in [0,T]$, and from these one would like to
infer what $q$ (and possibly $u(q;t)$) is.  In order to include a
possible identification of the state $u(q;t_n)$, we shall define a new
variable $x = (u, q)$, which we would thus like to identify:

Assume that $U:\C{U}\times\C{Q}\times[0,T] \ni (u_a,q,t) \mapsto
u(q;t) \in\C{U}$ is the flow or solution operator of \refeq{eq:I},
i.e.\ $u(q;t)=U(u_a,t_a,q,t)$, where $u_a$ is the initial condition at
time $t_a$.  We then look at the operator which advances the variable
$x=(u, q) \in \C{X} = \C{U}\times\C{Q}$ from $x_n=(u_n,q)$ at time
$t_n$ to $x_{n+1}=(u_{n+1},q)$ at $t_{n+1}$, where the Hilbert space
$\C{X}$ carries the natural inner product implied from $\C{U}$ and
$\C{Q}$,
\[ x_n=(u_n,q) \mapsto x_{n+1}=(u_{n+1},q) = (U(u_n,t_n,q,t_{n+1}),q) \in \C{X},\]
 or a bit more generally encoded in an operator $\hat{f}$:
\begin{equation}  \label{eq:dyn}
  \forall n\in\D{N}_0: \qquad 
      x_{n+1} = \hat{f}(x_n, w_n, n); \qquad x_0 = x_a \in \C{X} \text{ given} .
\end{equation}
This is a discrete time step advance map, for example of the dynamical
system \refeq{eq:I}, where a random `error' term $w_n$ is included,
which may be used to model randomness in the dynamical system per se,
or possible discretisation errors, or both, or similar things.  Most
dynamical --- and also quasi-static and stationary systems,
considering different loadings as a sequence in some pseudo-time ---
can be put in the form \refeq{eq:dyn} when observed at discrete points
in time.  Obviously, for fixed model parameters like $q$ in
\refeq{eq:I} the evolution is trivial and does not change anything,
but the \refeq{eq:dyn} allows to model everything in one formulation.

Often the dependence on the random term is assumed to be linear, so
that one has
\begin{equation}  \label{eq:dyn-l}
  \forall n\in\D{N}_0: \qquad 
      x_{n+1} = f(x_n) + \vepsilon S_x(x_n) w_n; \qquad x_0 = x_a  \text{ given},
\end{equation}
where the scalar $\vepsilon \geq 0$ explicitly measures the size of
the random term $w_n$, which is now assumed to be discrete white noise
of unit variance and zero mean, and possible correlations are
introduced via the linear operator $S_x(x_n)$.

But one can not observe the entity $q$ or $u(q; t)$, i.e.\ $x=(q,u)$
directly---like in Plato's cave allegory we can only see a `shadow'
--- here denoted by a vector $y\in\C{Y}$ --- of
it, formally given by a `measurement operator'
\begin{equation}  \label{eq:iI}
Y: \C{X}=\C{Q} \times \C{U} \ni (q,u(t_n)) \mapsto y_{n+1} = Y(q; u(t_n)) \in \C{Y},
\end{equation}
where for the sake of simplicity we assume $\C{Y}$ to be a Hilbert
space.

Typically one considers also some observational `error'
$\vepsilon v_n$, so that the observation may be expressed as 
\begin{equation} \label{eq:iiI}
y_{n+1} = H(Y(q; u(t_n)),\vepsilon v_n) = \hat{h}(x_n,\vepsilon v_n),
\end{equation}
where similarly as before $v_n$ is a discrete white noise process, and
the observer map $H$ resp.\ $\hat{h}$ combines the `true' quantity
$Y(q; u(t_n))$ to be measured with the error, to give the observation
$y_n$..

Translating this into the notation of the discrete dynamical system
\refeq{eq:dyn}, one writes
\begin{equation}  \label{eq:dyn-m}
  y_{n+1} = \hat{h}(x_n, \vepsilon v_n) \in \C{Y},
\end{equation}
where again the operator $\hat{h}$ is often assumed to be linear in
the noise term, so that one has similarly to \refeq{eq:dyn-l}
\begin{equation} \label{eq:dyn-ml}
  y_{n+1} = h(x_n) + \vepsilon S_y(x_n) w_n \in \C{Y}.  
\end{equation}

The mappings $Y$ in \refeq{eq:iI}, $H$ in \refeq{eq:iiI}, $\hat{h}$ in
\refeq{eq:dyn-m}, resp.\ $h$ \refeq{eq:dyn-ml} are usually not
invertible and hence the problem is called \emph{ill-posed}.  One way to
address this is via regularisation (see e.g.\ \cite{Engl2000}), but
here we follow a different track.  Modelling our lack of knowledge
about $q$ and $u(t_n)$ in a Bayesian way \cite{Tarantola2004} by
replacing them with a $\C{Q}$- resp.\ $\C{U}$-valued random variable
(RV), the problem becomes well-posed \cite{Stuart2010}.  But of course
one is looking now at the problem of finding a probability
distribution that best fits the data; and one also obtains a
probability distribution, not just \emph{one} pair $x_n=(q, u(t_n))$.

We shall allow for $\C{X}$ to be an infinite-dimensional space, as
well as for $\C{Y}$; although practically in any real situation only
finitely many components are measured.  But by allowing for the
infinite-dimensional case, we can treat the case of partial
differential equations --- PDE models --- like \refeq{eq:I} directly
and not just their discretisations, as its often done, and we only use
arguments which are independent on the number of observations.  In
particular this prevents hidden dependencies on local compactness, the
dimension of the model, or the number of measurements, and the
possible break-down of computational procedures as these dimensions
grow, as they will be designed for the infinite-dimensional case.  The
procedure practically performed in a real computation on a
finite-dimensional model and a finite-dimensional observation may then
be seen as an approximation of the infinite-dimensional case, and
analysed as such.

Here we focus on the use of a Bayesian approach inspired by the
`linear Bayesian' approach of \citep{Goldstein2007} in the framework of
`white noise' analysis \citep{hida}, \citep{holdenEtAl96},
\citep{Janson1997}, \citep{malliavin97}, \citep{Wiener1938},
\citep{xiuKarniadakis02a}.  Please observe that although the unknown
`truth' $x_n$ may be a deterministic quantity, the model for the
observed quantity $y_{n+1}$ involves randomness, and it therefore
becomes a RV as well.

To complete the mathematical setup we assume that $\Omega$ is a
measure space with $\sigma$-algebra $\F{A}$ and with a probability
measure $\D{P}$, and that $x: \Omega \to \C{X}$ and similarly $q, u$,
and $y$ are random variables (RVs).  The corresponding
\emph{expectation} will be denoted by $\bar{x} = \EXP{x} =
\int_{\Omega} x(\omega)\; \D{P}(\di \omega)$, giving the mean
$\bar{x}$ of the random variable, also denoted by $\ipj{x} :=
\bar{x}$.  The quantity $\tilde{x} := x - \bar{x}$ is the zero-mean or
fluctuating part of the RV $x$.

The space of vector valued RVs, say $x:\Omega \to \C{X}$, will for
simplicity only be considered in the form $\E{X} = \C{X} \otimes
\C{S}$, where $\C{X}$ is a Hilbert space with inner product
$\ip{\cdot}{\cdot}_{\C{X}}$, $\C{S}$ is a Hilbert space of scalar RVs ---
here we shall simply take $\C{S}=L_2(\Omega,\F{A},\D{P})$ --- with
inner product $\ip{\cdot}{\cdot}_{\C{S}}$, and the tensor product
signifies the Hilbert space completion with the scalar product as
usually defined for elementary tensors $x_1 \otimes s_1, x_2 \otimes
s_2 \in\E{X}$ with $x_1, x_2 \in \C{X}$ and $s_1, s_2 \in \C{S}$ by
\[ \ipd{x_1 \otimes s_1}{x_2 \otimes s_2}_{\E{X}} := 
           \ip{x_1}{x_2}_{\C{X}}\ip{s_1}{s_2}_{\C{S}} ,\]
and extended to all of $\E{X}$ by linearity.

Obviously, we may also consider the expectation not only as a linear
operator $\D{E}:\E{X} \to \C{X}$, but, as $\C{X}$ is isomorphic to the
subspace of constants $\E{X}_c := \C{X}\otimes \spn\{1\} \subset
\E{X}$, also as an orthogonal projection onto that subspace $\D{E} =
P_{\E{X}_c}$, and we have the orthogonal decomposition
\[ \E{X} = \E{X}_c \oplus \E{X}_c^\perp, \text{ with } \E{X}_c^\perp
=: \E{X}_0 , \]
where $\E{X}_0$ is the zero-mean subspace, so that
\[  \forall x\in\E{X}:\quad \bar{x} = \EXP{x} = P_{\E{X}_c} x \in
\E{X}_c,\; \tilde{x} = (I-P_{\E{X}_c}) x \in \E{X}_0 . \]

Later, the covariance operator between two Hilbert-space valued RVs
will be needed.  The covariance operator between two RVs $x$ and $y$
is denoted by
\[ C_{x y} : \C{Y} \ni v \mapsto \EXP{\tilde{x}\,
  \ip{\tilde{y}}{v}_{\E{Y}}} \in \C{X} \cong \E{X}_c. \]
For $x \in \C{X}\otimes \C{S}$ and $y \in \C{Y}\otimes \C{S}$ it is
also often written as $C_{x y}=\EXP{\tilde{x} \otimes \tilde{y}}$.

\subsection{Recollection of Bayes's theorem} \label{SS:bayes-laplace-thm}
Bayes's theorem is commonly accepted as a consistent way to incorporate
new knowledge into a probabilistic description \citep{jaynes03},
\citep{Tarantola2004}, and its present mathematical form is due to
Laplace, so that a better denomination would be the
\emph{Bayes-Laplace} theorem.

The elementary textbook statement of the theorem is about
conditional probabilities
\begin{equation}  \label{eq:iII}
 \D{P}(\C{I}_x|\C{M}_y) = \frac{\D{P}(\C{M}_y|\C{I}_x)}{\D{P}(\C{M}_y)}\D{P}(\C{I}_x),
 \quad \D{P}(\C{M}_y) > 0,
\end{equation}
where $\C{I}_x \subseteq \C{X}$ is some measurable subset of possible
$x$'s, and the measurable subset $\C{M}_z \subseteq \C{Y}$ is the
information provided by the measurement. Here the conditional
probability $\D{P}(\C{I}_x|\C{M}_y)$ is called the \emph{posterior}
probability, $\D{P}(\C{I}_x)$ is called the \emph{prior} probability,
the conditional probability $\D{P}(\C{M}_y|\C{I}_x)$ is called the
\emph{likelihood}, and $\D{P}(\C{M}_y)$ is called the evidence.  The
\refeq{eq:iII} is only valid when the set $\C{M}_y$ has non-vanishing
probability measure, and becomes problematic when $\D{P}(\C{M}_y)$
approaches zero, cf.\ \citep{jaynes03}, \citep{rao2005}.  This arises
often when $\C{M}_y = \{y_m\}$ is a one-point set representing a
measured value $y_m \in \C{Y}$, as such sets have typically vanishing
probability measure.  In fact the well-known \emph{Borel-Kolmogorov
  paradox} has led to numerous controversies and shows the possible
ambiguities \citep{jaynes03}.  Typically the posterior measure is
singular w.r.t.\ the prior measure, precluding a formulation in
densities.  Kolmogorov's resolution of this situation shall be
sketched later.

One well-known very special case where the formulation in densities is
possible, which has particular requirements on the likelihood, is
when $\C{X}$---as here---is a metric space, and there is a background
measure $\mu$ on $(\C{X}, \F{B}_{\C{X}})$ --- $\F{B}_{\C{X}}$ is the
Borel-$\sigma$-algebra of $\C{X}$ --- and similarly with $\nu$ and
$(\C{Y}, \F{B}_{\C{Y}})$, and the RVs $x$ and $y$ have probability
density functions (pdf) $\pi_x(x)$ w.r.t.\ $\mu$ and $\pi_y(y)$
w.r.t.\ $\nu$ resp.{}, and a joint density $\pi_{xy}(x,y)$ w.r.t.\
$\mu\otimes\nu$.  Then the theorem may be formulated as
(\citep{Tarantola2004} Ch.\ 1.5, \citep{rao2005}, \citep{jaynes03})
\begin{equation}  \label{eq:iIIa}
 \pi_{(x|y)}(x|y) = \frac{\pi_{xy}(x,y)}{\pi_y(y)} =
 \frac{\pi_{(y|x)}(y|x)}{Z_y} \pi_x(x),
\end{equation}
where naturally the marginal density $Z_y := \pi_y(y) = \int_{\C{X}}
\pi_{xy}(x,y)\;\mu(\di x)$ (from German \emph{Zustandssumme}) is a
normalising factor such that the conditional density
$\pi_{(x|y)}(\cdot|y)$ integrates to unity w.r.t\ $x$.  In this case
the limiting case where $\D{P}(\C{M}_y)$ vanishes may be captured via
the metric \citep{rao2005} \citep{jaynes03}.  The joint density
\[ \pi_{xy}(x,y) = \pi_{(y|x)}(y|x) \pi_x(x) \]
may be factored into the likelihood function $\pi_{(y|x)}(y|x)$ and
the prior density $\pi_x(x)$, like $\pi_y(y)$ a marginal density, $\pi_x(x) =
\int_{\C{Y}} \pi_{xy}(x,y)\;\nu(\di y)$.  These terms in the second
equality in \refeq{eq:iIIa} are in direct correspondence with those in
\refeq{eq:iII}.  Please observe that the model for the RV representing
the error in \refeq{eq:dyn-m} determines the likelihood functions
$\D{P}(\C{M}_y|\C{I}_x)$ resp.\ $\pi_{(y|x)}(y|x)$.  To require the
existence of the joint density is quite restrictive.  As
\refeq{eq:dyn-m} shows, $y$ is a function of $x$, and a joint density
on $\C{X}\times\C{Y}$ will generally not be possible as $(x,y) \in
\C{X} \times \C{Y}$ are most likely on a sub-manifold; but the
situation of \refeq{eq:dyn-ml} is one possibility where a joint
density may be established.  The background densities are typically in
finite dimensions the Lebesgue measure on $\D{R}^d$, or more general
Haar measures on locally compact Lie-groups \citep{Segal1978}.  Most
computational approaches determine the pdfs \citep{Marzouk2007},
\citep{Stuart2010}, \citep{Kucherova10}.

However, to avoid the critical cases alluded to above, Kolmogorov
already defined conditional probabilities via conditional expectation,
e.g.\ see \citep{Bobrowski2006/087}.  Given the conditional
expectation operator $\EXP{\cdot|\C{M}_y}$, the conditional
probability is easily recovered as $\D{P}(\C{I}_x|\C{M}_y) =
\EXP{\chi_{\C{I}_x}|\C{M}_y}$, where $\chi_{\C{I}_x}$ is the
characteristic function of the subset $\C{I}_x$.  It may be shown that
this extends the simpler formulation described by \refeq{eq:iII} or
\refeq{eq:iIIa} and is the more fundamental notion, which we examine
next.  Its definition will lead directly to practical computational
procedures.

\subsection{Conditional expectation} \label{SS:cond-expect}
The easiest point of departure for conditional expectation
\citep{Bobrowski2006/087} in our setting is to define it not just for
one piece of measurement $\C{M}_y$---which may not even be possible
unambiguously---but for sub-$\sigma$-algebras $\F{S} \subset \F{A}$ on
$\Omega$.  A sub-$\sigma$-algebra $\F{S}$ is a mathematical
description of a reduced possibility of randomness --- the smallest
sub-$\sigma$-algebra $\{\emptyset, \Omega\}$ allows only the constants
in $\E{X}_c$ --- as it contains fewer events than the full algebra
$\F{A}$.  The connection with a measurement $\C{M}_y$ is to take
$\F{S}:=\sigma(y)$, the $\sigma$-algebra generated by the measurement
$y=\hat{h}(x, \vepsilon v)$ from \refeq{eq:dyn-m}.  These are all
events which are consistent with possible observations of some value
for $y$.  This means that the observation of $y$ allows only a certain
`fineness' of information to be obtained, and this is encoded in the
sub-$\sigma$-algebra $\F{S}$.

\subsubsection{Scalar random variables} \label{SSS:cond-expect-scalar}
For scalar RVs ---functions $r(x)$ of $x$ with finite variance, i.e.\ elements
of $\C{S}:=L_2(\Omega, \F{A}, \D{P})$---the subspace corresponding to
the sub-$\sigma$-algebra $\C{S}_\infty :=L_2(\Omega, \F{S},\D{P})$ is
a closed subspace \citep{Bobrowski2006/087} of the full space $\C{S}$.
One example of such a scalar RV is the function 
\[ r(x) := \chi_{\C{I}_x}(x) = \begin{cases} 1 \; \text{ if } x \in \C{I}_x ,  \\
                                           0 \; \text{ otherwise},
                         \end{cases}   \]
mentioned at the end of \refSS{bayes-laplace-thm} used to define
conditional probability of the subset $\C{I}_x \subseteq \C{X}$ once a
conditional expectation operator is defined: $\D{P}(\C{I}_x|\F{S}) =
\EXP{\chi_{\C{I}_x}|\F{S}}$.
\begin{defi} \label{D:scalar-cond-exp} For scalar functions of $x$ ---
  scalar RVs $r(x)$ --- in $\C{S}$, the conditional expectation
  $\EXP{\cdot | \F{S}}$ is defined as the orthogonal projection onto
  the closed subspace $\C{S}_\infty$, so that $\EXP{r(x) | \F{S}} \in
  \C{S}_\infty$, e.g.\ see \citep{Bobrowski2006/087}.
\end{defi}
The question is now on how to characterise this subspace
$\C{S}_\infty$, in order to make it more accessible for possible
numerical computations.  In this regard, note that the
\emph{Doob-Dynkin} lemma \citep{Bobrowski2006/087} assures us that if a
RV $s(x)$ --- like $\EXP{r(x)|\F{S}}$ --- is in the subspace
$\C{S}_\infty$, then $s(x) = \vphi(y)$ for some $\vphi\in
L_0(\C{Y})$, the space of measurable scalar functions on
$\C{Y}$.  We state this key fact and the resulting
new characterisation of the conditional expectation in
\begin{prop}  \label{prop:Doob-Dynkin}
The subspace $\C{S}_\infty$ is given by
\begin{equation}  \label{eq:iIVxx}
\C{S}_\infty =  \overline{\spn} \{\vphi \; | \; 
     \vphi(\hat{h}(x,\vepsilon v));\; \vphi \in L_0(\C{Y})
      \; \text{ and } \vphi \in \C{S}\}.      
\end{equation}
The conditional expectation of a scalar RV $r(x)\in\C{S}$, being the
orthogonal projection, minimises the distance to the original RV over
the whole subspace:
\begin{equation} \label{eq:iIII}
  \EXP{r(x) |\F{S}} := P_{\C{S}_\infty}(r(x)) := \textup{arg min}_{\tilde{r}\in\C{S}_\infty}
  \; \| r(x) - \tilde{r} \|_{\C{S}},
\end{equation}
where $P_{\C{S}_\infty}$ is the orthogonal projector onto
$\C{S}_\infty$.  The \refeq{eq:iIVxx} and \refeq{eq:iIII} imply the
existence of a optimal map $\phi \in L_0(\C{Y})$ such that
\begin{equation}  \label{eq:opt-condex}
  \EXP{r(x) |\F{S}} = P_{\C{S}_\infty}(r(x)) = \phi(\hat{h}(x,\vepsilon v)).
\end{equation}
In \refeq{eq:iIII}, one may equally well minimise the square of the
distance, the \emph{loss-function}
\begin{equation} \label{eq:iIII-l}
\beta_{r(x)}(\tilde{r})= \frac{1}{2}  \; \| r(x) - \tilde{r} \|^2_{\C{S}}. 
\end{equation}
Taking the vanishing of the first variation / G\^ateaux derivative of
the loss-function \refeq{eq:iIII-l} as a necessary condition for a minimum
leads to a simple geometrical interpretation: the difference
between the original scalar RV $r(x)$ and its projection has to be
perpendicular to the subspace:
\begin{equation}  \label{eq:iIV}
  \forall \tilde{r} \in \C{S}_\infty: \;
  \ip{r(x)  - \EXP{r(x)|\F{S}}}{\tilde{r}}_{\C{S}} = 0, \; 
  \text{ i.e. } r(x)  - \EXP{r(x)|\F{S}} \in \C{S}_\infty^\perp .
\end{equation}
Rephrasing \refeq{eq:iIII} with account to \refeq{eq:iIV} and
\refeq{eq:iIII-l} leads for the optimal map $\phi \in L_0(\C{Y})$ to
\begin{equation} \label{eq:iV-DD}
  \EXP{r(x) |\sigma(y)} =  \phi(\hat{h}(x,\vepsilon v)) :=
  \textup{arg min}_{\vphi\in L_0(\C{Y})} \; \beta_{r(x)}(\vphi(\hat{h}(x,\vepsilon v))),
\end{equation}
and the orthogonality condition of \refeq{eq:iV-DD} which corresponds to
\refeq{eq:iIV} leads to
\begin{equation}  \label{eq:iIV-DDl}
  \forall \vphi \in L_0(\C{Y}): \;
  \ip{r(x)  - \phi(\hat{h}(x,\vepsilon v))}{\vphi(\hat{h}(x,\vepsilon v))}_{\C{S}} = 0 .
\end{equation}
\end{prop}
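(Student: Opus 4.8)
The plan is to assemble the statement from three standard ingredients: the Doob--Dynkin lemma quoted above, the Hilbert-space projection theorem applied to the closed subspace $\C{S}_\infty$, and the elementary variational characterisation of an orthogonal projection. None of the steps requires a delicate estimate; the work is in the measure-theoretic bookkeeping.

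\emph{First} I would establish \refeq{eq:iIVxx}. Since $\F{S}=\sigma(y)$ with $y=\hat{h}(x,\vepsilon v)$, the Doob--Dynkin lemma says that a scalar RV is $\F{S}$-measurable if and only if it is of the form $\vphi(\hat{h}(x,\vepsilon v))$ for some measurable $\vphi\in L_0(\C{Y})$; intersecting with the finite-variance requirement built into $\C{S}=L_2(\Omega,\F{A},\D{P})$ picks out exactly $\C{S}_\infty=L_2(\Omega,\F{S},\D{P})$. This set is already a linear subspace and, being an $L_2$-space over a sub-$\sigma$-algebra, is complete, hence closed in $\C{S}$, so forming the closed span on the right-hand side of \refeq{eq:iIVxx} changes nothing and the identity holds. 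The point that needs care is precisely the compatibility between the clause ``$\vphi\in L_0(\C{Y})$'' and the clause ``$\vphi\in\C{S}$'' (meaning $\vphi(\hat{h}(x,\vepsilon v))\in\C{S}$): only those measurable $\vphi$ whose composition lands in $L_2$ are admitted, and it is this restriction that keeps the span a closed subspace.

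\emph{Second}, with $\C{S}_\infty$ identified as a closed subspace of the Hilbert space $\C{S}$, Definition \ref{D:scalar-cond-exp} reads $\EXP{r(x)|\F{S}}:=P_{\C{S}_\infty}(r(x))$, and the projection theorem furnishes a unique element of $\C{S}_\infty$ attaining $\min_{\tilde{r}\in\C{S}_\infty}\|r(x)-\tilde{r}\|_{\C{S}}$, which is \refeq{eq:iIII}; minimising $\|r(x)-\tilde{r}\|_{\C{S}}$ and its square $\beta_{r(x)}(\tilde{r})$ is equivalent because $t\mapsto t^2/2$ is strictly increasing on $[0,\infty)$. Feeding the Doob--Dynkin representation back in, the minimiser $\EXP{r(x)|\F{S}}$ is itself of the form $\phi(\hat{h}(x,\vepsilon v))$ for some optimal $\phi\in L_0(\C{Y})$, which is \refeq{eq:opt-condex}; and rewriting the minimisation over $\C{S}_\infty$ as the minimisation over the representing maps $\vphi$ (those with $\vphi(\hat{h}(x,\vepsilon v))\in\C{S}$) gives \refeq{eq:iV-DD}.

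\emph{Third}, the orthogonality conditions. The functional $\beta_{r(x)}$ is convex and quadratic on $\C{S}$, so its minimiser over the subspace $\C{S}_\infty$ is characterised by the vanishing of the first variation: for every direction $\tilde{r}\in\C{S}_\infty$, $\left.\frac{\di}{\di\tau}\right|_{\tau=0}\beta_{r(x)}\bigl(\EXP{r(x)|\F{S}}+\tau\tilde{r}\bigr)=-\ip{r(x)-\EXP{r(x)|\F{S}}}{\tilde{r}}_{\C{S}}=0$, which is \refeq{eq:iIV}, i.e.\ the residual lies in $\C{S}_\infty^\perp$; substituting $\tilde{r}=\vphi(\hat{h}(x,\vepsilon v))$ turns this into \refeq{eq:iIV-DDl}. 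Conversely, convexity makes this necessary condition also sufficient, so \refeq{eq:iIV} (equivalently \refeq{eq:iIV-DDl}) characterises $\EXP{r(x)|\F{S}}$ uniquely. The only genuine obstacle, as noted, is keeping the distinction between the \emph{full} measurable space $L_0(\C{Y})$ and the subfamily of $\vphi$ that actually represents elements of the closed subspace $\C{S}_\infty$ onto which we project; once that is pinned down, the projection theorem and the first-variation computation are routine Hilbert-space calculus.
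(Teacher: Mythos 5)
Your proposal is correct and follows essentially the same route as the paper: \refeq{eq:iIVxx} via the Doob--Dynkin lemma, existence and characterisation of the minimiser via the orthogonal projection onto the closed subspace $\C{S}_\infty$ (equivalently, minimisation of the strictly convex quadratic loss), and the orthogonality conditions \refeq{eq:iIV}, \refeq{eq:iIV-DDl} from the vanishing G\^ateaux derivative. Your extra care about which $\vphi\in L_0(\C{Y})$ actually represent elements of $\C{S}$ is a welcome clarification but does not change the argument.
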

\begin{proof}
 The \refeq{eq:iIVxx} is a direct statement of the
 Doob-Dynkin lemma \citep{Bobrowski2006/087}, and the \refeq{eq:iIII}
 is equivalent to the definition of the conditional expectation being
 an orthogonal projection in $L_2(\Omega,\F{A},\D{P})$ --- actually an
 elementary fact of Euclidean geometry.

 The existence of the optimal map $\phi$ in \refeq{eq:opt-condex} is a
 consequence of the minimisation of a continuous, coercive, and
 strictly convex function --- the norm \refeq{eq:iIII} --- over the
 closed set $\C{S}_\infty$ in the complete space $\C{S}$.  The
 equivalence of minimising the norm \refeq{eq:iIII} and
 \refeq{eq:iIII-l} is elementary, which is re-stated in \refeq{eq:iV-DD}. 

 The two equivalents statements --- the `Galerkin orthogonality'
 conditions --- \refeq{eq:iIV} and \refeq{eq:iIV-DDl}
 follow not only from requiring the G\^ateaux derivative of
 \refeq{eq:iIII-l} to vanish, but also express an elementary fact of
 Euclidean geometry.
\end{proof}


The square of the distance $r(x)-\phi(y)$ may be interpreted as a
difference in variance, tying conditional expectation with variance
minimisation; see for example \citep{Papoulis1998/107},
\citep{Bobrowski2006/087}, and the references therein for basic
descriptions of conditional expectation.  See also
\citep{Luenberger1969}.

\subsubsection{Vector valued random variables} \label{SSS:cond-expect-vector}
Now assume that $R(x)$ is a function of $x$ which takes values in a
vector space $\C{R}$, i.e.\ a $\C{R}$-valued RV, where $\C{R}$ is a
Hilbert space.  Two simple examples are given by the conditional mean
where $R(x):=x\in\C{X}$ with $\C{R}=\C{X}$, and by the conditional
variance where one takes $R(x) := (x-\bar{x})\otimes(x-\bar{x}) =
(\tilde{x})\otimes(\tilde{x})$, where $\C{R} = \E{L}(\C{X})$.  The
Hilbert tensor product $\E{R}=\C{R} \otimes \C{S}$ is again needed for
such vector valued RVs, where a bit more formalism is required, as we
later want to take linear combinations of RVs, but with linear
operators as `coefficients' \citep{Luenberger1969}, and this is most
clearly expressed in a component-free fashion in terms of
$L$-invariance, where we essentially follow \citep{bosq2000},
\citep{bosq2007}:
\begin{defi} \label{D:L-inv} Let $\E{V}$ be a subspace of $\E{R}=\C{R}
  \otimes \C{S}$.  The subspace is called \emph{linearly closed},
  $L$-\emph{closed}, or $L$-\emph{invariant}, iff $\E{V}$ is closed,
  and $\forall v \in \E{V}$ and $\forall L \in \E{L}(\C{R})$ it holds
  that $L v \in \E{V}$.
\end{defi}
In finite dimensional spaces one can just apply the notions for the
scalar case in \refSSS{cond-expect-scalar} component by component, but
this is not possible in the infinite dimensional case.  Of course the
vectorial description here collapses to the scalar case upon taking
$\C{R} = \D{R}$.  From \citep{bosq2000} one has the following
\begin{prop} \label{prop:fst-res-lcl} It is obvious that the whole
  space $\E{R}=\C{R} \otimes \C{S}$ is linearly closed, and that for a
  linearly closed subspace $\E{V} \subseteq \E{R}$ its orthogonal
  complement $\E{V}^\perp$ is also linearly closed.  Clearly, for a
  closed subspace $\C{S}_a \subseteq \C{S}$, the tensor space $\C{R}
  \otimes \C{S}_a$ is linearly closed, and hence the space of
  constants $\E{R}_c = \C{R}\otimes\spn\{\chi_\Omega\} \cong \C{R}$ is
  linearly closed, as well as its orthogonal complement $\E{R}_0 =
  \E{R}_c^\perp$, the subspace of zero-mean RVs.
\end{prop}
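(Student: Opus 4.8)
The statement collects several elementary facts about $L$-invariant (linearly closed) subspaces of $\E{R} = \C{R} \otimes \C{S}$; each follows from unwinding Definition~\ref{D:L-inv}. My plan is to treat the four assertions in the order they appear, using only the definition of $L$-invariance and the behaviour of adjoints under the inner product on the Hilbert tensor product.

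First, that $\E{R}$ itself is linearly closed is immediate: $\E{R}$ is closed in itself, and for any $L \in \E{L}(\C{R})$ and $v \in \E{R}$ we have $Lv \in \E{R}$ by definition of $L$ acting on $\C{R}$-valued RVs (componentwise on the first factor of elementary tensors, extended by linearity and continuity). Second, for the orthogonal complement: let $\E{V}$ be linearly closed, take $w \in \E{V}^\perp$ and $L \in \E{L}(\C{R})$; I must show $Lw \perp \E{V}$. The key point is that the adjoint $L^* \in \E{L}(\C{R})$ also belongs to $\E{L}(\C{R})$, and the action on the tensor product satisfies $\ipd{Lw}{v}_{\E{R}} = \ipd{w}{L^* v}_{\E{R}}$ (check this on elementary tensors, where it reduces to $\ip{L r_1}{r_2}_{\C{R}}\ip{s_1}{s_2}_{\C{S}} = \ip{r_1}{L^* r_2}_{\C{R}}\ip{s_1}{s_2}_{\C{S}}$, then extend by bilinearity and density). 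Since $\E{V}$ is $L$-invariant, $L^* v \in \E{V}$, hence $\ipd{w}{L^* v}_{\E{R}} = 0$; as $v \in \E{V}$ was arbitrary, $Lw \in \E{V}^\perp$. Closedness of $\E{V}^\perp$ is automatic for an orthogonal complement.

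Third, for a closed subspace $\C{S}_a \subseteq \C{S}$, I claim $\C{R} \otimes \C{S}_a$ is linearly closed. It is closed by definition of the Hilbert tensor product completion restricted to the (closed) second factor. For $L$-invariance, note that $L \in \E{L}(\C{R})$ acts only on the first tensor factor, so it maps elementary tensors $r \otimes s$ with $s \in \C{S}_a$ to $(Lr) \otimes s$, again in $\C{R} \otimes \C{S}_a$; extend by linearity and continuity (using boundedness of $L$). Fourth, the space of constants $\E{R}_c = \C{R} \otimes \spn\{\chi_\Omega\}$ is the special case $\C{S}_a = \spn\{\chi_\Omega\}$, which is one-dimensional and hence closed, so $\E{R}_c$ is linearly closed; and $\E{R}_0 = \E{R}_c^\perp$ is linearly closed by the second assertion already proved. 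Finally I note $\E{R}_c \cong \C{R}$ via $r \otimes \chi_\Omega \mapsto r$ (an isometry up to the normalisation $\|\chi_\Omega\|_{\C{S}} = 1$, since $\D{P}$ is a probability measure).

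The only point requiring genuine care --- the \textbf{main obstacle}, though a mild one --- is the identity $\ipd{Lv}{w}_{\E{R}} = \ipd{v}{L^* w}_{\E{R}}$ on the tensor product, together with the fact that the componentwise action of $L \in \E{L}(\C{R})$ extends continuously from elementary tensors to all of $\E{R} = \C{R} \otimes \C{S}$; both rely on $L$ being bounded and on the density of finite sums of elementary tensors. Once that bookkeeping is dispatched, everything else is a direct appeal to Definition~\ref{D:L-inv} and standard Hilbert space geometry, which is presumably why the authors call the statements ``obvious'' and ``clear''.
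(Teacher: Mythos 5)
Your proof is correct: the paper itself offers no argument for this proposition (it is declared obvious and attributed to \citep{bosq2000}), and your write-up supplies exactly the standard details that are left implicit --- in particular the one genuinely non-trivial point, closure of $\E{V}^\perp$ under the action of $\E{L}(\C{R})$, is handled the right way via the adjoint identity $\ipd{Lw}{v}_{\E{R}}=\ipd{w}{L^*v}_{\E{R}}$ (i.e.\ $(L\otimes I)^*=L^*\otimes I$) together with the fact that $L^*\in\E{L}(\C{R})$, so $L$-invariance of $\E{V}$ yields $Lw\perp\E{V}$. The remaining claims (boundedness of $L\otimes I$ on the Hilbert tensor product, closedness and invariance of $\C{R}\otimes\C{S}_a$, the specialisations to $\E{R}_c$ and $\E{R}_0$) are dispatched as they should be, so nothing is missing.
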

Let $v \in \E{R}$ be a RV, and denote by 
\begin{equation} \label{eq:sub-sp-gen}
\C{R}_v   := \overline{\spn}\; v(\Omega), \quad \sigma(v) := \{
v^{-1}(B) \; : \; B \in \F{B}_{\C{R}}\}
\end{equation}
the closure of the span of the image of $v$ and the $\sigma$-algebra
generated by $v$, where $\F{B}_{\C{R}}$ is the Borel-$\sigma$-algebra
of $\C{R}$.  Denote the closed subspace generated by $\sigma(v)$ by
$\C{S}_v := L_2(\Omega, \sigma(v), \D{P}) \subseteq \C{S}$.  Let
$\E{R}_{Lv} := \overline{\spn}\; \{ Lv\; : \; L \in \E{L}(\C{R}) \}
\subseteq \E{R}$, the linearly closed subspace generated by $v$, and
finally denote by $\E{R}_{v} := \spn \{v\} \subseteq \E{R}$, the
one-dimensional ray and hence closed subspace generated by $v$.
Obviously it holds that
\[ v \in \E{R}_{v} \subseteq \E{R}_{Lv} \subseteq \C{R}
\otimes \C{S}_{v} \subseteq \E{R}, \; \text{ and } \bar{v}\in\C{R}_v , \]
and $\C{R}\otimes\C{S}_{v}$ is linearly closed according to
Proposition~\ref{prop:fst-res-lcl}.

\begin{defi} \label{D:L-orth}
Let $\E{V}$ and $\E{W}$ be subspaces of $\E{R}$, and $v, w \in \E{R}$ two RVs.
\begin{itemize}
\item  The two subspaces are \emph{weakly orthogonal} or simply just
  \emph{orthogonal},  denoted by

  $\E{V} \perp \E{W}$, iff $\forall v
  \in \E{V}, \forall w \in \E{W}$ it holds that $\ipd{v}{w}_{\E{R}}=0$.

\item  A RV $v \in \E{R}$ is \emph{weakly orthogonal} or simply just
  \emph{orthogonal} to the subspace $\E{W}$, denoted by

  $v \perp \E{W}$, iff $\E{R}_{v} \perp \E{W}$, i.e.\ $\forall w \in
  \E{W}$ it holds that $\ipd{v}{w}_{\E{R}} = 0$.

\item Two RVs $v, w \in \E{R}$ are \emph{weakly orthogonal} or
  as usual simply just \emph{orthogonal}, denoted by

  $v \perp w$, iff $\ipd{v}{w}_{\E{R}} = 0$, i.e.\ $\E{R}_{v} \perp \E{R}_{w}$.

\item  The two subspaces $\E{V}$ and $\E{W}$ are 
  \emph{strongly orthogonal} or $L$-\emph{orthogonal},  iff they are
  linearly closed---\refD{L-inv}---and it holds that
  $\ipd{Lv}{w}_{\E{R}} = 0$, $\forall v \in \E{V}, \forall w \in 
  \E{W}$ and $\forall L \in \E{L}(\C{R})$.  This is denoted by

  $\E{V} \Lperp \E{W}$, and in other words $\E{L}(\C{R})
  \ni C_{vw} = \EXP{v\otimes w}=0$.

\item  The RV $v$ is \emph{strongly orthogonal} to a
  linearly closed subspace $\E{W} \subseteq \E{R}$, denoted by 

  $v \Lperp \E{W}$, iff $\E{R}_{Lv} \Lperp \E{W}$,
  i.e.\ $\forall w \in \E{W}$ it holds that $C_{vw} = 0$.  

\item The two RVs $v, w$ are \emph{strongly orthogonal}
  or simply just \emph{uncorrelated}, denoted by 

  $v \Lperp w$, iff $C_{vw} = 0$, i.e.\
  $\E{R}_{Lv} \Lperp \E{R}_{Lw}$.

\item Let $\F{C}_1, \F{C}_2 \subseteq \F{A}$ be two
  sub-$\sigma$-algebras.  They are \emph{independent}, denoted by

  $\F{C}_1 \stindep \F{C}_2$, iff the closed subspaces of $\C{S}$
  generated by them are orthogonal in $\C{S}$:\\
   $L_2(\Omega,\F{C}_1,\D{P}) \perp  L_2(\Omega,\F{C}_2,\D{P})$.

\item The two subspaces $\E{V}$ and $\E{W}$ are
  \emph{stochastically independent}, denoted by

  $\E{V} \stindep \E{W}$, iff the sub-$\sigma$-algebras generated are:
  $\sigma(\E{V}) \stindep \sigma(\E{W})$.

\item The two RVs $v, w$ are \emph{stochastically independent}, denoted by 

  $v \stindep  w$, iff $\sigma(v) \stindep \sigma(w)$, i.e.\
  $\C{S}_v \perp \C{S}_w$.
\end{itemize}
\end{defi}
\begin{prop} \label{prop:scnd-res-lcl} Obviously $\E{R}_c \Lperp
  \E{R}_0$.  It is equally obvious that for any two closed
  subspaces $\C{S}_a, \C{S}_b \subseteq \C{S}$, the condition
  $\C{S}_a \perp \C{S}_b$ implies
  that the tensor product subspaces are strongly orthogonal:
  \[ \C{R} \otimes \C{S}_a \Lperp \C{R} \otimes \C{S}_b . \]
  This implies that for a closed subspace $\C{S}_s \subseteq \C{S}$
  the subspaces $\E{R}_s=\C{R} \otimes \C{S}_s \subseteq \E{R}$ and
  its orthogonal complement $\E{R}_s^\perp=\C{R} \otimes
  \C{S}_s^\perp$ are linearly closed and strongly orthogonal.
\end{prop}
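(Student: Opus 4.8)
The plan is to prove the middle assertion first and then read the other two off it. Indeed, the first assertion is the special case $\C{S}_s=\spn\{\chi_\Omega\}$ of the third (then $\E{R}_s=\C{R}\otimes\spn\{\chi_\Omega\}=\E{R}_c$ and, via the identity below, $\E{R}_s^\perp=\E{R}_0$), and the third assertion is the middle one applied to the pair $\C{S}_s,\C{S}_s^\perp$ together with the fact that the orthogonal complement of a tensor subspace is again a tensor subspace, $\E{R}_s^\perp=\C{R}\otimes\C{S}_s^\perp$. So the work splits into: (i) the middle assertion, and (ii) that complement identity.

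For (i), suppose $\C{S}_a\perp\C{S}_b$ in $\C{S}$. Both $\C{R}\otimes\C{S}_a$ and $\C{R}\otimes\C{S}_b$ are linearly closed already by Proposition~\ref{prop:fst-res-lcl}, so by Definition~\ref{D:L-orth} it only remains to check $\ipd{Lv}{w}_{\E{R}}=0$ for all $v\in\C{R}\otimes\C{S}_a$, $w\in\C{R}\otimes\C{S}_b$, $L\in\E{L}(\C{R})$. I would verify this first on elementary tensors $v=a\otimes s$ with $s\in\C{S}_a$ and $w=b\otimes t$ with $t\in\C{S}_b$, where by definition of the inner product on $\E{R}$
\[
 \ipd{L(a\otimes s)}{b\otimes t}_{\E{R}} = \ip{La}{b}_{\C{R}}\,\ip{s}{t}_{\C{S}} = 0
\]
because $\C{S}_a\perp\C{S}_b$; and then extend by bilinearity and continuity, using that finite sums of elementary tensors are dense in the completed tensor products $\C{R}\otimes\C{S}_a$ and $\C{R}\otimes\C{S}_b$ and that $v\mapsto Lv$ (i.e.\ $L\otimes I$) is continuous on $\E{R}$. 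In covariance language this is $C_{vw}=\EXP{v\otimes w}=0$, i.e.\ $\C{R}\otimes\C{S}_a\Lperp\C{R}\otimes\C{S}_b$.

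For (ii), write $\C{S}=\C{S}_s\oplus\C{S}_s^\perp$; then every elementary tensor $a\otimes s$ splits as $a\otimes s_1+a\otimes s_2$ with $s_1\in\C{S}_s$, $s_2\in\C{S}_s^\perp$, so $\C{R}\otimes\C{S}_s$ and $\C{R}\otimes\C{S}_s^\perp$ together span a dense subspace of $\E{R}$, while by (i) they are orthogonal. Hence $\E{R}=(\C{R}\otimes\C{S}_s)\oplus(\C{R}\otimes\C{S}_s^\perp)$ is an orthogonal decomposition and $\E{R}_s^\perp=\C{R}\otimes\C{S}_s^\perp$. Assembling: for closed $\C{S}_s$ both $\E{R}_s$ and $\E{R}_s^\perp$ are linearly closed by Proposition~\ref{prop:fst-res-lcl} and strongly orthogonal by (i), which is the third assertion; and the choice $\C{S}_s=\spn\{\chi_\Omega\}$ (closed, being one-dimensional) specialises this to $\E{R}_c\Lperp\E{R}_0$, the first.

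I do not expect a genuine obstacle --- the proposition itself calls all of this obvious --- but the two places I would be careful rather than terse are the passage from elementary tensors to the completed tensor product in (i), which rests on continuity of $L\otimes I$ and density of the algebraic tensor product, and the identity $(\C{R}\otimes\C{S}_s)^\perp=\C{R}\otimes\C{S}_s^\perp$ in (ii), which is standard for Hilbert tensor products but is the one spot where the orthogonal direct-sum decomposition should actually be spelled out.
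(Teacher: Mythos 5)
Your proof is correct. The paper itself offers no argument for this proposition --- it is simply declared obvious (the only ingredients on record being Proposition~\ref{prop:fst-res-lcl} and Definition~\ref{D:L-orth}) --- and your write-up supplies exactly the details one would expect: checking $\ipd{(L\otimes I)(a\otimes s)}{b\otimes t}_{\E{R}}=\ip{La}{b}_{\C{R}}\ip{s}{t}_{\C{S}}=0$ on elementary tensors and extending by density of the algebraic tensor product and continuity of $L\otimes I$, together with the orthogonal-complement identity $(\C{R}\otimes\C{S}_s)^\perp=\C{R}\otimes\C{S}_s^\perp$ obtained from the orthogonal decomposition $\E{R}=(\C{R}\otimes\C{S}_s)\oplus(\C{R}\otimes\C{S}_s^\perp)$; the specialisation $\C{S}_s=\spn\{\chi_\Omega\}$ then gives $\E{R}_c\Lperp\E{R}_0$. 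This is the natural route, and the two points you flag for care (density/continuity, and the complement identity) are precisely the ones worth spelling out.
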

We note from \citep{bosq2000}, \citep{bosq2007} the following results
which we collect in
\begin{prop}  \label{prop:indep-ortho}
   Let $v, w \in \E{R}_0$ be two zero-mean RVs.  Then
  \[ v \stindep w \Rightarrow v \Lperp w \Rightarrow v \perp w . \]
  Strong orthogonality in general does not imply independence, and
  orthogonality does not imply strong orthogonality, unless $\C{R}$ is
  one-dimensional.

If $\E{S}  \subseteq \E{R}$ is linearly closed, then
  \[ v \perp \E{S} \Rightarrow  v \Lperp \E{S}, \text{ i.e. }
    \E{R}_v \perp \E{S} \Rightarrow  \E{R}_v \Lperp \E{S} \Rightarrow
    \E{R}_{Lv} \Lperp \E{S} . \]
\end{prop}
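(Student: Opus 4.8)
The statement breaks into four essentially independent pieces, and the plan is to take the two positive implications almost for free from the preceding Propositions~\ref{prop:fst-res-lcl} and~\ref{prop:scnd-res-lcl} together with \refD{L-orth}, to refute the two converses by small explicit examples, and to prove the assertion about a linearly closed $\E{S}$ by unpacking $L$-invariance (\refD{L-inv}) and passing to adjoints. There is no deep obstacle here; the only thing needing attention is keeping the various subspaces ($\E{R}_v$, $\E{R}_{Lv}$, $\C{R}\otimes\C{S}_v$, $\E{S}$) apart and remembering which of them are linearly closed.

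\emph{The two implications.} For $v\stindep w\Rightarrow v\Lperp w$: by \refD{L-orth} the hypothesis means $\C{S}_v\perp\C{S}_w$ in $\C{S}$, so Proposition~\ref{prop:scnd-res-lcl} yields $\C{R}\otimes\C{S}_v\Lperp\C{R}\otimes\C{S}_w$; since $\E{R}_{Lv}\subseteq\C{R}\otimes\C{S}_v$ and $\E{R}_{Lw}\subseteq\C{R}\otimes\C{S}_w$ (noted just before the proposition) and strong orthogonality restricts to linearly closed subspaces, $\E{R}_{Lv}\Lperp\E{R}_{Lw}$, i.e.\ $v\Lperp w$. (Hands-on variant: for $\eta,\zeta\in\C{R}$ the scalars $\ip{v}{\zeta}_\C{R}$ and $\ip{w}{\eta}_\C{R}$ are $\sigma(v)$- resp.\ $\sigma(w)$-measurable, so independence together with $\bar v=\bar w=0$ gives $\ip{C_{vw}\eta}{\zeta}_\C{R}=\EXP{\ip{v}{\zeta}_\C{R}\ip{w}{\eta}_\C{R}}=0$, whence $C_{vw}=0$.) For $v\Lperp w\Rightarrow v\perp w$: by \refD{L-orth}, $\E{R}_{Lv}\Lperp\E{R}_{Lw}$ means $\ipd{Lu}{u'}_\E{R}=0$ for all $u\in\E{R}_{Lv}$, $u'\in\E{R}_{Lw}$, $L\in\E{L}(\C{R})$; taking $u=v$, $u'=w$ and $L$ the identity gives $\ipd{v}{w}_\E{R}=0$, i.e.\ $v\perp w$.

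\emph{Sharpness.} Strong orthogonality need not entail independence, already for $\C{R}=\D{R}$: let $\xi$ be a symmetric zero-mean unit-variance scalar RV and put $v=\xi$, $w=\xi^2-1\in\E{R}_0$; then $C_{vw}=\EXP{\xi^3}-\EXP{\xi}=0$, so $v\Lperp w$, yet $w$ is a deterministic function of $v$ with $\|w\|_\C{S}\neq0$, so $\C{S}_v\not\perp\C{S}_w$, i.e.\ $v\not\stindep w$. Orthogonality need not entail strong orthogonality once $\dim\C{R}\ge2$: pick orthonormal $e_1,e_2\in\C{R}$ and a zero-mean unit-variance scalar $\xi$, and set $v=\xi\,e_1$, $w=\xi\,e_2$; then $\ipd{v}{w}_\E{R}=\ip{e_1}{e_2}_\C{R}\EXP{\xi^2}=0$, but $C_{vw}\eta=\EXP{\xi^2}\ip{e_2}{\eta}_\C{R}e_1=\ip{e_2}{\eta}_\C{R}e_1$, so $C_{vw}e_2=e_1\neq0$ and $v\not\Lperp w$. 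This also settles the ``unless'' clause: when $\dim\C{R}=1$ we identify $\C{R}=\D{R}$, $\E{L}(\C{R})=\D{R}$ acting by scalar multiplication, and $C_{vw}$ is then multiplication by $\EXP{vw}=\ipd{v}{w}_\E{R}$, so $v\perp w\Leftrightarrow v\Lperp w$ there.

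\emph{The linearly closed subspace.} Assume $\E{S}$ linearly closed and $v\perp\E{S}$, i.e.\ $\ipd{v}{s}_\E{R}=0$ for all $s\in\E{S}$. For $L\in\E{L}(\C{R})$ the adjoint $L^*$ lies in $\E{L}(\C{R})$ as well, and $L$-invariance (\refD{L-inv}) gives $L^*s\in\E{S}$; hence $\ipd{Lv}{s}_\E{R}=\EXP{\ip{Lv}{s}_\C{R}}=\EXP{\ip{v}{L^*s}_\C{R}}=\ipd{v}{L^*s}_\E{R}=0$. By linearity and continuity of the inner product this extends from the generators $Lv$ to all of $\E{R}_{Lv}=\overline{\spn}\{Lv:L\in\E{L}(\C{R})\}$, so $\E{R}_{Lv}\perp\E{S}$; and since $\E{R}_{Lv}$ is itself linearly closed, $Lu\in\E{R}_{Lv}$ for $u\in\E{R}_{Lv}$, whence $\ipd{Lu}{s}_\E{R}=0$ for all $u\in\E{R}_{Lv}$, $s\in\E{S}$, $L\in\E{L}(\C{R})$ --- i.e.\ $\E{R}_{Lv}\Lperp\E{S}$, which by \refD{L-orth} is exactly $v\Lperp\E{S}$, the displayed chain being then immediate. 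The point to be careful about --- and the natural place for a slip --- is that the ray $\E{R}_v$ itself is \emph{not} linearly closed, so one cannot argue this way for $\E{R}_v$ directly but must first fatten it to $\E{R}_{Lv}$ via the adjoint; this is exactly why the $\dim\C{R}\ge2$ example above, where $\E{R}_w$ fails to be linearly closed, is not in conflict with this last assertion.
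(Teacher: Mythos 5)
Your argument is correct, but note that the paper itself offers no proof of this proposition: it is stated as a result collected from the cited monographs of Bosq (\citep{bosq2000}, \citep{bosq2007}), so there is nothing internal to compare line by line. What you supply is a self-contained derivation that in fact runs on exactly the machinery the paper builds around the statement: the implication $v\stindep w\Rightarrow v\Lperp w$ via Proposition~\ref{prop:scnd-res-lcl} and the inclusions $\E{R}_{Lv}\subseteq\C{R}\otimes\C{S}_v$, the implication $v\Lperp w\Rightarrow v\perp w$ by specialising $L$ to the identity, and the final assertion by the adjoint/rank-one mechanism that the paper itself uses only later, in the proof of Lemma~\ref{L:ortho_L-ortho} (your $L^*s\in\E{S}$ step is the mirror image of the paper's choice $L:r_*\mapsto\ip{r_2}{r_*}_{\C{R}}\,r_1$ there). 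Your explicit counterexamples for sharpness, and the observation that the ray $\E{R}_v$ is not linearly closed --- so the middle link $\E{R}_v\Lperp\E{S}$ in the displayed chain must be read loosely and the genuine content is $\E{R}_{Lv}\Lperp\E{S}$ --- are points the paper leaves entirely to the references, so this is a net gain in completeness. One small repair: in the first counterexample ``symmetric zero-mean unit-variance'' is not quite enough, since for a Rademacher variable $\xi^2-1\equiv 0$ and the example collapses; take $\xi$ standard Gaussian (or any $\xi\in L_4$ with non-degenerate $\xi^2$) so that $\|w\|_{\C{S}}\neq 0$ as you assert.
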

From this we obtain the following:
\begin{lem} \label{L:ortho_L-ortho} 
  Set $\E{R}_\infty := \C{R} \otimes \C{S}_\infty$ for the
  $\C{R}$-valued RV $R(x)$ with finite variance on the
  sub-$\sigma$-algebra $\F{S}$, representing the new information.
 
  Then $\E{R}_\infty$ is $L$-invariant or strongly closed, and for any
  zero mean RV $v \in \E{R}$:
\begin{equation} \label{eq:perp-Lperp}
 v \in \E{R}^\perp_\infty \Leftrightarrow v \perp \E{R}_\infty
 \Rightarrow v \Lperp \E{R}_\infty .
\end{equation}
In addition, it holds --- even if $v \in \E{R}$ is not zero mean ---
that
\begin{equation} \label{eq:perp-corr-perp}
 v \in \E{R}^\perp_\infty \Leftrightarrow v \perp \E{R}_\infty
 \Rightarrow \forall w \in \E{R}_\infty : \; \EXP{v \otimes w} = 0 .
\end{equation}
\end{lem}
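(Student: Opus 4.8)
The plan is to assemble the three assertions from the tensor-product machinery developed above, the only genuine point being that weak orthogonality to a linearly closed subspace upgrades automatically to strong orthogonality.

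First I would dispose of the $L$-invariance of $\E{R}_\infty$. Since $\C{S}_\infty=L_2(\Omega,\F{S},\D{P})$ is a closed subspace of $\C{S}$ (recorded in \refSSS{cond-expect-scalar}), Proposition~\ref{prop:fst-res-lcl} gives at once that $\E{R}_\infty=\C{R}\otimes\C{S}_\infty$ is linearly closed. I would also invoke Proposition~\ref{prop:scnd-res-lcl} with $\C{S}_s=\C{S}_\infty$, which in addition identifies the orthogonal complement as $\E{R}_\infty^\perp=\C{R}\otimes\C{S}_\infty^\perp$ (because $\C{S}=\C{S}_\infty\oplus\C{S}_\infty^\perp$ splits the tensor product) and states that this complement is again linearly closed with $\E{R}_\infty^\perp\Lperp\E{R}_\infty$. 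This last fact is what makes the remaining implications nearly automatic.

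Next, the equivalence $v\in\E{R}_\infty^\perp\Leftrightarrow v\perp\E{R}_\infty$ is just a matter of unwinding \refD{L-orth}: by definition $v\perp\E{R}_\infty$ says $\ipd{v}{w}_{\E{R}}=0$ for every $w\in\E{R}_\infty$, i.e.\ that $v$ lies in the orthogonal complement, and no hypothesis on the mean of $v$ enters --- which is why this equivalence figures in both \eqref{eq:perp-Lperp} and \eqref{eq:perp-corr-perp}. For the remaining implications: when $v$ is zero-mean, $v\perp\E{R}_\infty$ together with the linear closedness of $\E{R}_\infty$ yields $v\Lperp\E{R}_\infty$ by the last part of Proposition~\ref{prop:indep-ortho}, which is \eqref{eq:perp-Lperp}; for general $v$ the quickest route to \eqref{eq:perp-corr-perp} is to read off $\EXP{v\otimes w}=0$ for all $w\in\E{R}_\infty$ directly from $\E{R}_\infty^\perp\Lperp\E{R}_\infty$, since by \refD{L-orth} strong orthogonality of subspaces \emph{is} the vanishing of the (uncentred) operators $\EXP{v\otimes w}$. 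If a self-contained argument were preferred, I would use $L$-invariance directly: for $w\in\E{R}_\infty$ and any $L\in\E{L}(\C{R})$ we have $Lw\in\E{R}_\infty$, hence $\ipd{v}{Lw}_{\E{R}}=0$, and letting $L$ run over the rank-one operators in $\E{L}(\C{R})$ --- then extending from elementary tensors $w$ to all of $\E{R}_\infty$ by linearity and continuity --- forces $\EXP{v\otimes w}=0$.

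I do not expect a real obstacle; the only thing needing care is the bookkeeping around centring --- keeping \eqref{eq:perp-Lperp}, phrased through the covariance $C_{vw}$ and hence most natural for centred $v$ (as in Proposition~\ref{prop:indep-ortho}), distinct from \eqref{eq:perp-corr-perp}, which asserts the stronger-looking vanishing of the raw second moment $\EXP{v\otimes w}$ and needs no centring --- together with the routine check that the elementary-tensor computation extends to the whole Hilbert tensor product $\E{R}_\infty$ by density and continuity of the bilinear maps involved.
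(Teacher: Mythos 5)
Your proposal is correct and follows essentially the same route as the paper's proof: $L$-invariance of $\E{R}_\infty$ via Proposition~\ref{prop:fst-res-lcl}, the implication \eqref{eq:perp-Lperp} as a direct consequence of Proposition~\ref{prop:indep-ortho}, and \eqref{eq:perp-corr-perp} by using $Lw\in\E{R}_\infty$ and testing with rank-one operators $L\in\E{L}(\C{R})$, which is exactly the computation in the paper. The only caveat is that your ``quickest route'' to \eqref{eq:perp-corr-perp} --- reading $\EXP{v\otimes w}=0$ off the strong-orthogonality statement of Proposition~\ref{prop:scnd-res-lcl} together with the ``in other words'' clause of Definition~\ref{D:L-orth} --- presupposes precisely the identification that the rank-one argument establishes, so it is your self-contained fallback argument that actually carries the proof.
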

\begin{proof}
  $\E{R}_\infty$ is of the type $\C{R} \otimes \C{S}_\infty$ where
  $\C{S}_\infty$ is a closed subspace, and $\C{R}$ is obviously
  closed.  From the remarks above it follows that $\E{R}_\infty$ is
  $L$-invariant or linearly resp.\ strongly closed.  The
  \refeq{eq:perp-Lperp} is a direct consequence of
  Proposition~\ref{prop:indep-ortho}.

  To prove \refeq{eq:perp-corr-perp}, take any $w \in \E{R}_\infty$
  and any $L\in\E{L}(\C{R})$, then
\[ v \in \E{R}^\perp_\infty \Rightarrow 0=\ipd{v}{w}_{\E{R}}=\ipd{v}{Lw}_{\E{R}}=
\EXP{\ip{v}{Lw}_{\C{R}}} . \]
  Now, for any $r_1, r_2 \in \C{R}$, take the mapping $L: r_*
  \mapsto \ip{r_2}{r_*}_{\C{R}}\,r_1$, yielding
\begin{multline*}
  0=\EXP{\ip{v}{Lw}_{\C{R}}} = \EXP{\ip{v}{\ip{r_2}{w}_{\C{R}}\,r_1}_{\C{R}}} =\\
\EXP{\ip{v}{r_1}_{\C{R}}\ip{r_2}{w}_{\C{R}}} = 
\ip{r_1}{\EXP{v\otimes w}\,r_2}_{\C{R}} \Leftrightarrow \EXP{v\otimes w}\equiv 0. 
\end{multline*}
\end{proof}

Extending the scalar case described in \refSSS{cond-expect-scalar},
instead of 
\[ \C{S} = L_2(\Omega,\D{P},\F{A}) =
L_2(\Omega,\D{P},\F{A};\D{R}) \cong \D{R} \otimes
L_2(\Omega,\D{P},\F{A}) = \D{R} \otimes \C{S} \]
and its subspace generated by the measurement
\[ \C{S}_\infty = L_2(\Omega,\D{P},\F{S}) =
L_2(\Omega,\D{P},\F{S};\D{R}) \cong \D{R} \otimes
L_2(\Omega,\D{P},\F{S}) = \D{R} \otimes \C{S}_\infty \]
one now considers the space \refeq{eq:space-def1} and its subspace \refeq{eq:space-def2}
\begin{eqnarray} \label{eq:space-def1}
L_2(\Omega,\D{P},\F{A};\C{R}) &\cong& \C{R} \otimes
L_2(\Omega,\D{P},\F{A})  = \C{R} \otimes \C{S} := \E{R} \; \text{ and} \\
\label{eq:space-def2}
L_2(\Omega,\D{P},\F{S};\C{R}) &\cong& \C{R} \otimes
L_2(\Omega,\D{P},\F{S})  = \C{R} \otimes \C{S}_\infty  := \E{R}_\infty
\subseteq \E{R}. 
\end{eqnarray}
The conditional expectation in the vector-valued case is defined
completely analogous to the scalar case, see \refD{scalar-cond-exp}:
\begin{defi} \label{D:vector-cond-exp} 
  For $\C{R}$-valued functions of $x$ --- vectorial RVs $R(x)$ --- in
  the Hilbert-space $\E{R}$ \refeq{eq:space-def1}, the conditional
  expectation $\EXP{\cdot | \F{S}}: \E{R}\to\E{R}$ is defined as the
  orthogonal projection onto the closed subspace $\E{R}_\infty$
  \refeq{eq:space-def2}, denoted by $P_{\E{R}_\infty}$, so that
  $\EXP{R(x) | \F{S}} = P_{\E{R}_\infty}(R(x)) \in \E{R}_\infty$, e.g.\
  see \citep{bosq2000}, \citep{Bobrowski2006/087}.
\end{defi}
From this one may derive a characterisation of the conditional
expectation similar to Proposition~\ref{prop:Doob-Dynkin}.
\begin{thm}  \label{T:Doob-Dynkin-v}
The subspace $\E{R}_\infty$ is given by
\begin{equation}  \label{eq:iIVxx-v}
\E{R}_\infty =   \{\vphi \; | \; 
     \vphi(\hat{h}(x,\vepsilon v))\in \E{R};\; \vphi \in L_0(\C{Y},\C{R})\}.      
\end{equation}
The conditional expectation of a vector-valued RV $R(x)\in\E{R}$,
being the orthogonal projection, minimises the distance to the
original RV over the whole subspace:
\begin{equation} \label{eq:iIII-v}
  \EXP{R(x) |\F{S}} := P_{\E{R}_\infty}(R(x)) := \textup{arg min}_{\tilde{R}\in\E{R}_\infty}
  \; \| R(x) - \tilde{R} \|_{\E{R}},
\end{equation}
where $P_{\E{R}_\infty}$ is the orthogonal projector onto
$\E{R}_\infty$.  The \refeq{eq:iIVxx-v} and \refeq{eq:iIII-v} imply the
existence of a optimal map $\Phi \in L_0(\C{Y},\C{R})$ such that
\begin{equation}  \label{eq:opt-condex-v}
  \EXP{R(x) |\F{S}} = P_{\E{R}_\infty}(R(x)) = \Phi(\hat{h}(x,\vepsilon v)).
\end{equation}
In \refeq{eq:iIII-v}, one may equally well minimise the square of the
distance, the \emph{loss-function}
\begin{equation} \label{eq:iIII-l-v}
\beta_{R(x)}(\tilde{R})= \frac{1}{2}  \; \| R(x) - \tilde{R} \|^2_{\E{R}}. 
\end{equation}
Taking the vanishing of the first variation / G\^ateaux derivative of
the loss-function \refeq{eq:iIII-l-v} as a necessary condition for a
minimum leads to a simple geometrical interpretation: the difference
between the original vector-valued RV $R(x)$ and its projection has to
be perpendicular to the subspace $\E{R}_\infty$: 
$ \forall \tilde{R} \in \E{R}_\infty:$
\begin{equation}  \label{eq:iIV-v}
  \ipd{R(x)  - \EXP{R(x)|\F{S}}}{\tilde{R}}_{\E{R}} = 0, \; 
  \text{ i.e. } R(x)  - \EXP{R(x)|\F{S}} \in \E{R}_\infty^\perp .
\end{equation}
Rephrasing \refeq{eq:iIII-v} with account to \refeq{eq:iIV-v} and
\refeq{eq:iIII-l-v} leads for the optimal map $\Phi \in L_0(\C{Y},\C{R})$ to
\begin{equation} \label{eq:iV-DD-v}
  \EXP{R(x) |\sigma(y)} =  \Phi(\hat{h}(x,\vepsilon v)) :=
  \textup{arg min}_{\vphi\in L_0(\C{Y},\C{R})} \; \beta_{R(x)}(\vphi(\hat{h}(x,\vepsilon v))),
\end{equation}
and the orthogonality condition of \refeq{eq:iV-DD-v} which corresponds to
\refeq{eq:iIV-v} leads to
\begin{equation}  \label{eq:iIV-DDl-v}
  \forall \vphi \in L_0(\C{Y},\C{R}): \;
  \ipd{R(x)  - \Phi(\hat{h}(x,\vepsilon v))}{\vphi(\hat{h}(x,\vepsilon v))}_{\E{R}} = 0 .
\end{equation}
In addition, as $\E{R}_\infty$ is linearly closed, one obtains the
useful statement 
\begin{equation}  \label{eq:iIV-L-t}
  \forall \tilde{R} \in \E{R}_\infty: \;
  \E{L}(\C{R}) \ni \EXP{(R(x)  - \EXP{R(x)|\F{S})}\otimes\tilde{R}} = 0.
\end{equation}
or rephrased $\forall \vphi \in L_0(\C{Y},\C{R})$:
\begin{equation}  \label{eq:iIV-DDl-t}
    \E{L}(\C{R}) \ni \EXP{(R(x)  - \Phi(\hat{h}(x,\vepsilon
    v)))\otimes\vphi(\hat{h}(x,\vepsilon v))} = 0 .
\end{equation}
\end{thm}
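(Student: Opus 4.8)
The plan is to follow, point by point, the argument already given for the scalar statement Proposition~\ref{prop:Doob-Dynkin}, upgrading each ingredient from $\C{S}$ to $\E{R}=\C{R}\otimes\C{S}$ and adding at the end the genuinely vectorial conclusions \refeq{eq:iIV-L-t} and \refeq{eq:iIV-DDl-t}. First I would dispatch \refeq{eq:iIVxx-v}: this is the $\C{R}$-valued Doob-Dynkin lemma applied to the generating RV $y=\hat{h}(x,\vepsilon v)$, which identifies $L_2(\Omega,\F{S},\D{P};\C{R})$ with $\{\vphi\circ\hat{h}(\cdot,\vepsilon v)\mid\vphi\in L_0(\C{Y},\C{R})\}\cap\E{R}$; by \refeq{eq:space-def2} this set is exactly $\E{R}_\infty$. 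Note that no $\overline{\spn}$ appears here, in contrast to \refeq{eq:iIVxx}, because $L_0(\C{Y},\C{R})$ already contains \emph{all} measurable maps and the intersection with $\E{R}$ merely enforces finite variance. Then \refeq{eq:iIII-v} is a restatement of \refD{vector-cond-exp} together with the elementary Hilbert-space fact that the orthogonal projection onto a closed subspace is the nearest-point map.

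Next I would treat the existence of the optimal map $\Phi$ and the variational reformulations. The functional $\tilde R\mapsto\|R(x)-\tilde R\|_{\E{R}}$, equivalently its square $\beta_{R(x)}$ of \refeq{eq:iIII-l-v}, is continuous, coercive and strictly convex on the complete space $\E{R}$; minimising it over the closed subspace $\E{R}_\infty$ has a unique minimiser, which by \refeq{eq:iIVxx-v} is of the form $\Phi(\hat{h}(x,\vepsilon v))$ for some $\Phi\in L_0(\C{Y},\C{R})$ --- this gives \refeq{eq:opt-condex-v} and, after the elementary equivalence of minimising the norm and its square, also \refeq{eq:iV-DD-v}. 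Setting the first variation (G\^ateaux derivative) of $\beta_{R(x)}$ to zero --- or simply invoking the projection theorem --- yields the Galerkin orthogonality \refeq{eq:iIV-v}, i.e.\ $R(x)-\EXP{R(x)|\F{S}}\in\E{R}_\infty^\perp$, and its map form \refeq{eq:iIV-DDl-v}; all of this is word for word the scalar argument with $\E{R}$ in place of $\C{S}$.

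The only step with new content is \refeq{eq:iIV-L-t}/\refeq{eq:iIV-DDl-t}, the strengthening from weak orthogonality to vanishing covariance. Here I would invoke Lemma~\ref{L:ortho_L-ortho}: it tells us that $\E{R}_\infty=\C{R}\otimes\C{S}_\infty$ is linearly closed ($L$-invariant), and, crucially, that \refeq{eq:perp-corr-perp} holds \emph{even without} a zero-mean assumption on the residual $v:=R(x)-\EXP{R(x)|\F{S}}$. Applying \refeq{eq:perp-corr-perp} to this $v\in\E{R}_\infty^\perp$ gives $\EXP{v\otimes w}=0$ for every $w\in\E{R}_\infty$, which is \refeq{eq:iIV-L-t}; replacing $w$ by $\vphi(\hat{h}(x,\vepsilon v))$ and using \refeq{eq:iIVxx-v} gives \refeq{eq:iIV-DDl-t}. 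One may note that, since $\C{S}_\infty$ contains the constants, $\E{R}_c\subseteq\E{R}_\infty$ and hence $v$ is automatically zero-mean, so \refeq{eq:perp-Lperp} would already suffice; citing \refeq{eq:perp-corr-perp} simply keeps the statement valid as written for arbitrary $R(x)$.

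I expect no serious obstacle: the argument is essentially a careful transcription of the scalar case. The one place demanding attention is precisely the last paragraph --- making sure the subspace onto which one projects is not merely closed but $L$-invariant, and that the covariance-vanishing conclusion is not accidentally restricted to zero-mean random variables; both are exactly what Lemma~\ref{L:ortho_L-ortho} (via Propositions~\ref{prop:fst-res-lcl}--\ref{prop:indep-ortho}) was set up to supply. A secondary, purely measure-theoretic point to check is that composition with $\hat{h}(\cdot,\vepsilon v)$ followed by the finite-variance restriction really does exhaust $L_2(\Omega,\F{S},\D{P};\C{R})$, i.e.\ the vector form of Doob-Dynkin, so that $\E{R}_\infty$ in \refeq{eq:space-def2} coincides with the set in \refeq{eq:iIVxx-v}.
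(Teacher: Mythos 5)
Your proposal is correct and follows essentially the same route as the paper: the vector-valued Doob--Dynkin lemma for \refeq{eq:iIVxx-v}, a verbatim transcription of the scalar argument of Proposition~\ref{prop:Doob-Dynkin} for the variational and orthogonality statements, and Lemma~\ref{L:ortho_L-ortho} (linear closedness of $\E{R}_\infty$) for \refeq{eq:iIV-L-t} and \refeq{eq:iIV-DDl-t}. The only cosmetic difference is that you invoke \refeq{eq:perp-corr-perp} for both strong-orthogonality conclusions, whereas the paper cites \refeq{eq:perp-Lperp} for the first and \refeq{eq:perp-corr-perp} for the second; your remark that the residual is automatically zero-mean reconciles the two.
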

\begin{proof}
  The \refeq{eq:iIVxx-v} is just a version of the Doob-Dynkin lemma
  again \citep{Bobrowski2006/087}, this time for vector-valued
  functions.  The \refeq{eq:iIII-v}, \refeq{eq:opt-condex-v},
  \refeq{eq:iIII-l-v}, \refeq{eq:iIV-v}, \refeq{eq:iV-DD-v}, and
  \refeq{eq:iIV-DDl-v} follow just as in the scalar case of
  Proposition~\ref{prop:Doob-Dynkin}.
  
  As $\E{R}_\infty$ is linearly closed according to
  \refL{ortho_L-ortho}, the \refeq{eq:perp-Lperp} causes
  \refeq{eq:iIV-v} to imply \refeq{eq:iIV-L-t}, and
  \refeq{eq:iIV-DDl-v} together with \refeq{eq:perp-corr-perp} from
  Lemma~\ref{L:ortho_L-ortho} to imply \refeq{eq:iIV-DDl-t}.
\end{proof}

Already in \citep{Kalman} it was noted that the conditional
expectation is the best estimate not only for the \emph{loss function}
`distance squared', as in \refeq{eq:iIII-l} and \refeq{eq:iIII-l-v},
but for a much larger class of loss functions under certain
distributional constraints.  However for the quadratic loss function this
is valid without any restrictions.

Requiring the derivative of the quadratic loss function in
\refeq{eq:iIII-l} and \refeq{eq:iIII-l-v} to vanish may also be
characterised by the \emph{Lax-Milgram} lemma, as one is minimising a
quadratic functional over the vector space $\E{R}_\infty$, which is
closed and hence a \emph{Hilbert} space.  For later reference, this
result is recollected in
\begin{thm} \label{T:cond-expect-orthog} 
  In the scalar case, there is a unique minimiser $\EXP{r(x) |\F{S}} =
  P_{\C{S}_\infty}(r(x)) \in \C{S}_\infty$ to the problem in
  \refeq{eq:iIII}, and it is characterised by the \emph{orthogonality}
  condition \refeq{eq:iIV}
\begin{equation}  \label{eq:iIV-L}
  \forall \tilde{r} \in \C{S}_\infty: \quad
  \ip{r(x)  - \EXP{r(x)|\F{S}}}{\tilde{r}}_{\C{S}} = 0.
\end{equation}
The minimiser is unique as an element of $\C{S}_\infty$, but the
mapping $\phi \in L_0(\C{Y})$ in \refeq{eq:iV-DD} may not necessarily
be.  It also holds that
\begin{equation}  \label{eq:iIV-P}
\|P_{\C{S}_\infty}(r(x))\|_{\C{S}}^2 = \|r(x)\|_{\C{S}}^2 -  
             \|r(x) - P_{\C{S}_\infty}(r(x))\|_{\C{S}}^2 .
\end{equation}
As in the scalar case, in the vector-valued case there is a unique
minimiser $\EXP{R(x) |\F{S}} = P_{\E{R}_\infty}(R(x)) \in
\E{R}_\infty$ to the problem in \refeq{eq:iIII-v}, which satisfies the
\emph{orthogonality} condition \refeq{eq:iIV-v}
\begin{equation}  \label{eq:iIV-L-v}
  \forall \tilde{R} \in \E{R}_\infty: \quad
  \ipd{R(x)  - \EXP{R(x)|\F{S}}}{\tilde{R}}_{\E{R}} = 0,
\end{equation}
which is equivalent to the the \emph{strong orthogonality condition}
\refeq{eq:iIV-L-t}
\begin{equation}  \label{eq:iIV-L-v-s}
  \forall \tilde{R} \in \E{R}_\infty: \quad
  \EXP{R(x)  - \EXP{R(x)|\F{S}}\otimes\tilde{R}} = 0.
\end{equation}
The minimiser is unique as an element of $\E{R}_\infty$, but the
mapping $\Phi \in L_0(\C{Y},\C{R})$ in \refeq{eq:iV-DD-v} may not
necessarily be.  It also holds that
\begin{equation}  \label{eq:iIV-P-v}
\|P_{\E{R}_\infty}(R(x))\|_{\E{R}}^2 = \|R(x)\|_{\E{R}}^2 - 
             \|R(x) - P_{\E{R}_\infty}(R(x))\|_{\E{R}}^2 .
\end{equation}
\end{thm}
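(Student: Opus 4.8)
The plan is to read Theorem~\ref{T:cond-expect-orthog} as a collection of standard Hilbert-space facts about orthogonal projections, applied to the concrete closed subspaces $\C{S}_\infty \subseteq \C{S}$ and $\E{R}_\infty \subseteq \E{R}$, together with one extra ingredient in the vector case, namely the passage from weak to strong orthogonality supplied by \refL{ortho_L-ortho}.

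\textbf{Scalar case.} First I would recall that $\C{S}_\infty = L_2(\Omega,\F{S},\D{P})$ is a \emph{closed} subspace of the Hilbert space $\C{S} = L_2(\Omega,\F{A},\D{P})$ (as already noted, following \citep{Bobrowski2006/087}), hence itself a Hilbert space. The loss function $\beta_{r(x)}$ in \refeq{eq:iIII-l} is continuous, coercive and strictly convex on $\C{S}_\infty$, so by the Hilbert projection theorem --- equivalently by the Lax--Milgram lemma applied to the bilinear form $\ip{\cdot}{\cdot}_{\C{S}}$ restricted to $\C{S}_\infty$, as indicated in the text above --- it attains a unique minimiser $\EXP{r(x)|\F{S}} = P_{\C{S}_\infty}(r(x)) \in \C{S}_\infty$. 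Setting the G\^ateaux derivative of $\beta_{r(x)}$ to zero at the minimiser yields exactly the orthogonality condition \refeq{eq:iIV-L}, and conversely any element of $\C{S}_\infty$ satisfying \refeq{eq:iIV-L} is the minimiser; this gives the characterisation and its uniqueness \emph{as an element of $\C{S}_\infty$}. The non-uniqueness of the representing map $\phi\in L_0(\C{Y})$ in \refeq{eq:iV-DD} is then a separate, easy observation: the evaluation map $\vphi \mapsto \vphi(\hat{h}(x,\vepsilon v))$ from $L_0(\C{Y})$ into $\C{S}_\infty$ need not be injective (for instance when $\hat{h}(\cdot,\vepsilon v)$ is not injective on the relevant support), so the preimage of the unique minimiser may contain more than one $\phi$. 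Finally \refeq{eq:iIV-P} is the Pythagorean identity: writing $r(x) = P_{\C{S}_\infty}(r(x)) + (r(x) - P_{\C{S}_\infty}(r(x)))$ with the two summands orthogonal by \refeq{eq:iIV-L}, one gets $\|r(x)\|_{\C{S}}^2 = \|P_{\C{S}_\infty}(r(x))\|_{\C{S}}^2 + \|r(x) - P_{\C{S}_\infty}(r(x))\|_{\C{S}}^2$, which rearranges to \refeq{eq:iIV-P}.

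\textbf{Vector case.} The subspace $\E{R}_\infty = \C{R}\otimes\C{S}_\infty$ is closed in the Hilbert space $\E{R} = \C{R}\otimes\C{S}$ (it is of tensor-product form with a closed factor; this is exactly what \refL{ortho_L-ortho} records, together with its $L$-invariance). So the very same argument --- Hilbert projection theorem / Lax--Milgram for the quadratic functional $\beta_{R(x)}$ of \refeq{eq:iIII-l-v} on the closed subspace $\E{R}_\infty$ --- gives the unique minimiser $\EXP{R(x)|\F{S}} = P_{\E{R}_\infty}(R(x))$, the weak orthogonality condition \refeq{eq:iIV-L-v}, the analogous non-uniqueness of $\Phi\in L_0(\C{Y},\C{R})$, and the Pythagorean identity \refeq{eq:iIV-P-v}. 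The only genuinely new point is the equivalence of \refeq{eq:iIV-L-v} with the strong orthogonality statement \refeq{eq:iIV-L-v-s}. Here I would invoke \refL{ortho_L-ortho} directly: since $\E{R}_\infty$ is linearly closed, \refeq{eq:perp-corr-perp} applied to the residual $v := R(x) - \EXP{R(x)|\F{S}}$ turns \refeq{eq:iIV-L-v} into \refeq{eq:iIV-L-v-s}; for the converse one simply takes $L$ to be the identity on $\C{R}$ in the strong orthogonality condition (equivalently cites the implication $v\Lperp\E{S}\Rightarrow v\perp\E{S}$ of Proposition~\ref{prop:indep-ortho}), recovering \refeq{eq:iIV-L-v}.

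\textbf{Main obstacle.} There is no real obstacle; the proof is pure bookkeeping once the correct abstract framework is in place. The only point that needs care --- and the reason the preparatory \refL{ortho_L-ortho} and Propositions~\ref{prop:fst-res-lcl}--\ref{prop:indep-ortho} were set up --- is the infinite-dimensional subtlety that $\E{R}_\infty$ must be shown to be both closed \emph{and} $L$-invariant before one can pass freely between the scalar-style orthogonality \refeq{eq:iIV-L-v} and the operator-valued strong orthogonality \refeq{eq:iIV-L-v-s}; in finite dimensions this would be automatic by working coordinatewise, but here it genuinely uses the tensor-product structure $\E{R}_\infty = \C{R}\otimes\C{S}_\infty$.
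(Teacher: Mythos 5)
Your proposal is correct and follows essentially the same route as the paper: the paper's proof likewise reduces everything to the Lax--Milgram lemma (equivalently the projection theorem) on the closed subspace, to \refL{ortho_L-ortho} for the passage between weak and strong orthogonality, and to Pythagoras's theorem for \refeq{eq:iIV-P} and \refeq{eq:iIV-P-v}, with the rest deferred to Proposition~\ref{prop:Doob-Dynkin} and \refT{Doob-Dynkin-v}. Your added details (the explicit converse via $L=I$ and the non-injectivity of the evaluation map explaining the non-uniqueness of $\phi$) are sound elaborations of what the paper leaves implicit.
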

\begin{proof}
  It is all already contained in Proposition~\ref{prop:Doob-Dynkin}
  resp.\ \refT{Doob-Dynkin-v}.  Except for \refeq{eq:iIV-L-v-s}, this
  is just a re-phrasing of the \emph{Lax-Milgram} lemma, as the
  bi-linear functional---in this case the inner product---is naturally
  coercive and continuous on the subspace $\E{R}_\infty$, which is
  closed and hence a \emph{Hilbert} space.  The only novelty here are
  the \refeq{eq:iIV-P} and \refeq{eq:iIV-P-v} which follow from
  \emph{Pythagoras's} theorem.
\end{proof}

\section{Characterising the posterior} \label{S:char-rv}
The information contained in the Bayesian update is encoded in the
conditional expectation.  And it only characterises the distribution
of the posterior.  A few different ways of characterising the
distribution via the conditional expectation are sketched in
\refSS{char-dist}.  But in many situations, notably in the setting of
\refeq{eq:dyn} or \refeq{eq:dyn-l}, with the observations according to
\refeq{eq:dyn-m} or \refeq{eq:dyn-ml}, we want to construct a new RV
$z \in \E{X}$ to serve as an approximation to the solution of
\refeq{eq:dyn} or \refeq{eq:dyn-l}.  This then is a \emph{filter}, and
a few possibilities will be given in \refSS{char-p_rv-filt}.

\subsection{The posterior distribution measure} \label{SS:char-dist}
It was already mentioned at the beginning of
\refSSS{cond-expect-scalar}, that the scalar function $r_{\C{I}_x}(x)
= \chi_{\C{I}_x}(x)$ may be used to characterise the conditional
probability distribution of $x\in \E{X}$.  Indeed, if for a RV $R(x)
\in \E{R}$ one defines:
\begin{equation}  \label{eq:cond-prob-E}
\forall \C{E}\in\F{B}_{\C{R}}:\; \D{P}(\C{E}|\F{S}) := \EXP{\chi_{\C{E}}(R)|\F{S}},
\end{equation}
one has completely characterised the posterior distribution, a version
of which is under certain conditions---\citep{Bobrowski2006/087},
\citep{rao2005}, \citep{jaynes03}---a measure on $\C{R}$, the image
space of the RV $R$.

One may also recall that the \emph{characteristic function} in the
sense of stochastics of a RV $R \in \E{R}$, namely
\[ \vphi_R:\; \C{R}^* \ni r^* \mapsto \vphi_R(r^*) := \EXP{\exp(\ii \,
  \ip{r^*}{R}_{\C{R}})}, \] 
completely characterises the distribution of the RV $R$.  As we assume
that $\C{R}$ is a Hilbert space, we may identify $\C{R}$ with its dual
space $\C{R}^*$, and in this case take $\vphi_R$ as defined on
$\C{R}$.  If now a conditional expectation operator $\EXP{\cdot|\F{S}}$
is given, it may be used to define the \emph{conditional}
characteristic function $\vphi_{R|\F{S}}$:
\begin{equation}  \label{eq:cond-char-fct}
\forall r\in\C{R}:\; \vphi_{R|\F{S}}(r) := \EXP{\exp(\ii \,
  \ip{r}{R}_{\C{R}})|\F{S}}.
\end{equation}
This again completely characterises the posterior distribution.

Another possible way, actually encompassing the previous two, is to
look at all functions $\psi:\C{R} \to \D{R}$, and compute---when they
are defined and finite---the quantities
\begin{equation}  \label{eq:cond-all-fct}
 \mu_\psi := \EXP{\psi(R)|\F{S}},
\end{equation}
again completely characterising the posterior distribution.
The two previous examples show that not \emph{all} functions of $R$
with finite conditional expectation are needed.  The first example
uses the set of functions
\[ \{ \psi \; | \; \psi(R) = \chi_{\C{E}}(R), \C{E} \in \F{B}_{\C{R}}\}, \]
whereas the second example uses the set
\[ \{ \psi \; | \; \psi(R) = \exp(\ii \, \ip{r}{R}_{\C{R}}), 
  r \in \C{R} \}. \]

\subsection{A posterior random variable --- filtering} \label{SS:char-p_rv-filt}
In the context of a situation like in \refeq{eq:dyn} resp.\
\refeq{eq:dyn-l}, which represents the \emph{unknown} system and state
vector $x_n$, and where one observes $y_n$ according to
\refeq{eq:dyn-m} resp.\ \refeq{eq:dyn-ml}, one wants to have an
estimating or \emph{tracking} model system, with a state estimate
$z_n$ for $x_n$ which would in principle obey \refeq{eq:dyn} resp.\
\refeq{eq:dyn-l} with the noise $w_n$ set to zero---as one only knows
the structure of the system as given by the maps $\hat{f}$ resp.\ $f$
but not the initial condition $x_0$ nor the noise.  The observations
$y_n$ can be used to correct the state estimate $z_n$, as will be
shown shortly.  The state estimate will be computed via Bayesian
updating.  But the Bayesian theory, as explained above, only
characterises the posterior \emph{distribution}; and there are many
random variables which might have a given distribution.  To obtain a
RV $z_n$ which can be used to predict the next state $x_{n+1}$ through
the estimate $z_{n+1}$ one may use a filter based on Bayesian theory.
The mean vehicle for this will be the notion of conditional
expectation as described in the previous \refS{bayes}.  As we will
first consider only one update step, the time index $n$ will be
dropped for the sake of ease of notation:  The true state is $x$, its
\emph{forecast} is $x_f$, and the forecast of the measurement is
$y_f(x_f)$, whereas the observation is $\hat{y}$.

To recall, according to \refD{vector-cond-exp}, the Bayesian update is
defined via the conditional expectation $\EXP{R(x) |\sigma(y(x))}$
through a measurement $y(x)$---which will for the sake of simplicity
be denoted just by $\EXP{R(x) |y}$---of a $\C{R}$-valued RV $R(x)$ is
simply the orthogonal projection onto the subspace $\E{R}_\infty$ in
\refeq{eq:iIVxx-v},
\[ \EXP{R(x) |y} = P_{\E{R}_\infty}(R(x)) = \Phi_R(y(x)), \]
which is given by the optimal map $\Phi_R$ from \refeq{eq:opt-condex-v},
characterised by \refeq{eq:iIV-DDl-t}, where we have added an index
$R$ to signify that this is the optimal map for the conditional
expectation of the RV $R \in \E{R}$.

The linearly closed subspace $\E{R}_\infty$ induces a orthogonal
decomposition decomposition
\[\E{R} = \E{R}_\infty \oplus \E{R}_\infty^\perp,\]
where the orthogonal projection onto $\E{R}_\infty^\perp$ is given by
$I-P_{\E{R}_\infty}$.  Hence a RV in $\E{R}$ like $R(x)$ can
be decomposed accordingly as
\begin{multline}  \label{eq:orth-decomp-P}
  R(x) = P_{\E{R}_\infty}(R(x)) +  \left(I-P_{\E{R}_\infty}\right)(R(x)) =\\
  \Phi_R(y(x)) + (R(x) - \Phi_R(y(x))).
\end{multline}
This \refeq{eq:orth-decomp-P} is the starting point for the updating.
A measurement $\hat{y}$ will inform us about the component in
$\E{R}_\infty$, namely $\Phi_R(\hat{y})$, while we leave the component
orthogonal to it unchanged: $R(x_f) - \Phi_R(y(x_f))$.  Adding these two
terms then gives an \emph{updated} or \emph{assimilated} RV $R_a \in \E{R}$:
\begin{multline}  \label{eq:orth-upd}
  R_a = \Phi_R(\hat{y}) + (R(x_f) - \Phi_R(y(x_f)))  
      = \bar{R}_a^{|\hat{y}} + \tilde{R}_a = \\ 
  R(x_f) + (\Phi_R(\hat{y})-\Phi_R(y(x_f))) = R_f + R_\infty ,
\end{multline}
where $R_f = R(x_f) \in \E{R}$ is the \emph{forecast} and $R_\infty =
(\Phi_R(\hat{y})-\Phi_R(y(x_f))) \in \E{R}$ is the \emph{innovation}.
For $\bar{R}_a^{|\hat{y}} = \Phi_R(\hat{y})$  and $\tilde{R}_a =
R(x_f) - \Phi_R(y(x_f))$ one has the following result:
\begin{prop} \label{prop:corr-CE}
The assimilated RV $R_a$ from \refeq{eq:orth-upd} has the correct
conditional expectation
\begin{equation}  \label{eq:corr-CE}
  \EXP{R_a |y} = \Phi_R(\hat{y}) + \EXP{(R(x_f) - \Phi_R(y(x_f)))|y} = 
   \EXP{R(x_f) |\hat{y}} = \bar{R}_a^{|\hat{y}},
\end{equation}
better would be \emph{posterior} expectation---after the observation $\hat{y}$.
\end{prop}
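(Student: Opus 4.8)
The plan is to compute $\EXP{R_a\,|\,y}$ directly from the definition of the conditional expectation as the orthogonal projection $P_{\E{R}_\infty}$ onto the linearly closed subspace $\E{R}_\infty$ (\refD{vector-cond-exp} together with \refT{cond-expect-orthog}), using only linearity of that projection and the orthogonal decomposition \refeq{eq:orth-decomp-P}. Starting from $R_a = \Phi_R(\hat{y}) + \bigl(R(x_f) - \Phi_R(y(x_f))\bigr)$ as in \refeq{eq:orth-upd}, linearity immediately gives $\EXP{R_a\,|\,y} = \EXP{\Phi_R(\hat{y})\,|\,y} + \EXP{R(x_f) - \Phi_R(y(x_f))\,|\,y}$, which is the middle expression in \refeq{eq:corr-CE}; it then only remains to evaluate the two summands separately.

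First I would note that, since the observed value $\hat{y}\in\C{Y}$ is deterministic, $\Phi_R(\hat{y})\in\C{R}$ is a \emph{constant} RV, i.e.\ an element of $\E{R}_c = \C{R}\otimes\spn\{\chi_\Omega\}$; because $\chi_\Omega$ is measurable with respect to every sub-$\sigma$-algebra, $\E{R}_c \subseteq \E{R}_\infty$, so $\Phi_R(\hat{y})$ is left unchanged by the projection: $\EXP{\Phi_R(\hat{y})\,|\,y} = P_{\E{R}_\infty}(\Phi_R(\hat{y})) = \Phi_R(\hat{y})$. Second, by the defining property of the optimal map in \refeq{eq:opt-condex-v} one has $\EXP{R(x_f)\,|\,y} = P_{\E{R}_\infty}(R(x_f)) = \Phi_R(y(x_f))$, so the residual $R(x_f) - \Phi_R(y(x_f)) = (I - P_{\E{R}_\infty})(R(x_f))$ lies in $\E{R}_\infty^\perp$ and its conditional expectation vanishes. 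Adding the two contributions yields $\EXP{R_a\,|\,y} = \Phi_R(\hat{y}) = \bar{R}_a^{|\hat{y}}$, which is by convention the posterior expectation $\EXP{R(x_f)\,|\,\hat{y}}$ obtained by evaluating $\Phi_R$ at the actual observation — precisely the chain of equalities asserted in \refeq{eq:corr-CE}.

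The one delicate point, which I would flag explicitly rather than bury, is interpretational and not technical: the symbol $\EXP{R(x_f)\,|\,\hat{y}}$ has to be read as the value at $\hat{y}$ of the optimal map $\Phi_R\in L_0(\C{Y},\C{R})$ supplied by \refT{Doob-Dynkin-v}, and not as a literal conditioning on the event $\{y(x_f)=\hat{y}\}$, which — as the discussion of the Borel--Kolmogorov paradox in \refSS{bayes-laplace-thm} recalls — is generally a null event and hence ambiguous. Since $\Phi_R$ is itself only determined up to $(y(x_f))_*\D{P}$-null sets, pointwise evaluation at $\hat{y}$ tacitly presupposes that a representative of $\Phi_R$ has been chosen within a class for which this is meaningful (e.g.\ continuous, or the polynomial approximants used in the later sections). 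Granting this convention, the proposition is an immediate consequence of linearity of $P_{\E{R}_\infty}$, the inclusion $\E{R}_c\subseteq\E{R}_\infty$, and the identity $P_{\E{R}_\infty}(R(x_f)) = \Phi_R(y(x_f))$; no further estimate or appeal to the strong-orthogonality machinery of \refL{ortho_L-ortho} is required here.
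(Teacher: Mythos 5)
Your proof is correct and follows essentially the same route as the paper: both arguments rest on the fact that the residual $R(x_f)-\Phi_R(y(x_f))=(I-P_{\E{R}_\infty})(R(x_f))$ lies in $\E{R}_\infty^\perp$, so its conditional expectation (projection onto $\E{R}_\infty$) vanishes, while the term $\Phi_R(\hat{y})$ is a constant left untouched by the conditioning. Your additional remarks --- the explicit observation that $\E{R}_c\subseteq\E{R}_\infty$ handles the constant term, and the caveat that $\EXP{R(x_f)\,|\,\hat{y}}$ means evaluation of the optimal map $\Phi_R$ at $\hat{y}$ (determined only up to null sets) --- are sound refinements of the same argument rather than a different proof.
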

\begin{proof}
  Observe that that the conditional expectation of the second term
  $\tilde{R}_a$ in \refeq{eq:orth-upd} vanishes:
\begin{multline*} 
  \EXP{\tilde{R}_a|y} = \EXP{(R(x) - \Phi_R(y(x)))|y} = \\
                P_{\E{R}_\infty}(R(x) - P_{\E{R}_\infty}(R(x))) =
                P_{\E{R}_\infty}(R(x)) - P_{\E{R}_\infty}(R(x)) = 0.
\end{multline*}
This means that the conditional expectation of the second term in
\refeq{eq:corr-CE} is nought, whereas the remaining term
$\Phi_R(\hat{y})$ is just $\EXP{R(x_f) |\hat{y}}$.
\end{proof}
From \refeq{eq:orth-upd} one can now construct filters.  As often the
optimal map $\Phi_R$ is often not easy to compute, one may even want
to replace it by an approximation, say $g_R$, so that the update
equation is
\begin{equation}  \label{eq:orth-upd-g}
    \tilde{R}_a = R(x_f) + (g_R(\hat{y})-g_R(y(x_f))).
\end{equation}
Whichever way, either the \refeq{eq:orth-upd} or
\refeq{eq:orth-upd-g}, they are composed of the following elements,
the prior knowledge, which gives the prediction or forecast
$R_f=R(x_f)$ for the RV $R$, and the correction, innovation, or update
\[ R_\infty = (\Phi_R(\hat{y})-\Phi_R(y(x_f))) \approx (g_R(\hat{y})-g_R(y(x_f))),\]
which is the update difference between the actual observation
$\hat{y}$ and the predicted or forecast observation $y(x_f)$.

\subsubsection{Getting the mean right} \label{SSS:filt-mean}
The simplest function $R(x)$ to think of is the identity $R(x):=x$.
This gives an update---a filter---for the RV $x$ itself.  The optimal
map will be denoted by $\Phi_x$ in this case.  From
\refeq{eq:orth-upd} one has:
\begin{equation}  \label{eq:orth-upd-x}
  x_a = x_f + (\Phi_x(\hat{y})-\Phi_x(y(x_f))) = x_f + x_\infty ,
\end{equation}
and Proposition~\ref{prop:corr-CE} ensures that the assimilated RV
$x_a$ has the correct conditional mean
\begin{equation}  \label{eq:corr-mean-x}
\EXP{x_a|y} = \EXP{x_f |\hat{y}} = \Phi_x(\hat{y}) =: \bar{x}^{|\hat{y}}.
\end{equation}

The \refeq{eq:orth-upd-x} is the basis for many filtering algorithms,
and many variations on the basic prescription are possible.  Often
they will be such that the property according to
Proposition~\ref{prop:corr-CE}, the correct conditional mean, is only
approximately satisfied.  This is due to the fact that for one the
\refeq{eq:orth-upd-x} is an equation for RVs, which in their entirety
can not be easily handled, they are typically infinite dimensional
objects and thus have to be discretised for numerical purposes. 

It was also already pointed out that the optimal map $\Phi_x$ is not
easy to compute, and thus approximations are used, $\Phi_x \approx
g_x$, the simplest one being where $g_x = G_x \in \E{L}(\C{Y},\C{X})$
is taken as a linear map, leading to linear filters \citep{bosq2000},
\citep{Goldstein2007}.  The well-known Kalman filter (KF)
\citep{Kalman} and its many variants and extensions --- e.g.\ extended
KF, Gauss-Markov-Kalman filter, square root KF, etc. --- and
simplifications --- e.g.\ 3DVar, 4DVar, Kriging, Gaussian process
emulation (GPE) --- arise in this way (e.g.\ \citep{Papoulis1998/107},
\citep{Tarantola2004}, \citep{Evensen2009}, \citep{saadGhn:2009},
\citep{Blanchard2010a}, \citep{BvrAkJsOpHgm11},
\citep{bvrAlOpHgm12-a}, \citep{opBvrAlHgm12}, \citep{OpBrHgm12}).

As the conditional expectation of $x_a$ in \refeq{eq:orth-upd-x} is
\refeq{eq:corr-mean-x} $\EXP{x_a|\hat{y}} = \Phi_x(\hat{y}) =
\bar{x}^{|\hat{y}}$, the zero-mean part of $x_a$ is $\tilde{x}_a=x_f -
\Phi_x(y(x_f))$.  The posterior variance of the RV $x_a$ is thus
\begin{equation} \label{eq:orth-upd-x-var}
  C_{x_a x_a|\hat{y}} =  \EXP{\tilde{x}_a \otimes \tilde{x}_a|\hat{y}} 
  = \EXP{(x_f - \Phi_x(y(x_f))) \otimes (x_f - \Phi_x(y(x_f))) | \hat{y}},
\end{equation}
and it has been noted many times that this does not depend
on the observation $\hat{y}$.  Still, one may note (e.g.\
\citep{Tarantola2004}) 
\begin{prop} \label{prop:corr-var-Gauss} 
  Assume that $x_f$ is a Gaussian RV, that the observation
  $y(x_f)=\hat{h}(x_f, v)$ --- absorbing the scaling $\vepsilon$ into
  $v$ --- is affine in $x_f$ and in the uncorrelated Gaussian
  observational noise $v$, i.e.\ $v \Lperp x_f$ and $C_{v x_f} =0$.
  Then the optimal map $\Phi_x = K_x \in \E{L}(\C{Y},\C{X})$ is
  linear, and the updated or assimilated RV $x_a$ from
  \refeq{eq:orth-upd-x} is also Gaussian, and has the \emph{correct}
  posterior distribution, characterised by the mean
  \refeq{eq:corr-mean-x}, $\bar{x}^{|\hat{y}}$, and the covariance
  \refeq{eq:orth-upd-x-var}.  Setting $w = \hat{h}(x_f, v) := H(x_f) +
  v$ with $H \in \E{L}(\C{X},\C{Y})$, one obtains from
  \refeq{eq:orth-upd-x-var}
  \begin{multline} \label{eq:Gauss-cov} C_{x_a x_a|\hat{y}} =
    \EXP{\tilde{x}_a \otimes \tilde{x}_a|\hat{y}} = C_{x_f x_f} - K_x
    C_{w x_f} - C_{w x_f}^T K_x^T + K_x C_{ww} K_x^T = \\
    (I - K_xH) C_{x_f x_f} (I-K_xH)^T + K_x C_{vv} K_x^T
  \end{multline}
  for the covariance, and for the mean
  \begin{equation}    \label{eq:Gauss-mean}
    \bar{x}^{|\hat{y}} = \EXP{x_a|\hat{y}} = K_x \hat{y}.
  \end{equation}
\end{prop}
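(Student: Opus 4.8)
The plan is to recognise this as the classical Gauss--Markov/Kalman statement, whose only real content is that for jointly Gaussian random variables the optimal map of \refT{Doob-Dynkin-v} is affine (and linear after centring). I would proceed in four steps: (i) show that $(x_f,v)$, and hence $(x_f,w)$ with $w=H(x_f)+v$, is jointly Gaussian; (ii) identify $\Phi_x$ explicitly as an affine operator built from $K_x:=C_{x_f w}C_{ww}^{-1}$; (iii) substitute this into \refeq{eq:orth-upd-x} to read off that $x_a$ is Gaussian and to compute its mean and covariance; (iv) match these with the known Gaussian posterior of $x_f$ given $w=\hat{y}$. For (i): $x_f$ and $v$ are each Gaussian and, by hypothesis, uncorrelated ($C_{v x_f}=0$); for Gaussian random variables uncorrelatedness is equivalent to stochastic independence, so $x_f\stindep v$ and the pair is jointly Gaussian, and $(x_f,w)$ is then jointly Gaussian because $w=H(x_f)+v$ is a bounded affine image of $(x_f,v)$.

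For step (ii) I would use the standard ``decorrelation'' trick together with the uniqueness in \refT{cond-expect-orthog}. Set $K_x:=C_{x_f w}C_{ww}^{-1}$ (on $\overline{\im}\, C_{ww}$, with a generalised inverse if $C_{ww}$ is not boundedly invertible; equivalently, $K_x$ is the operator realising the projection whose existence is already guaranteed by \refT{cond-expect-orthog}), and put $z:=x_f-\bar{x}_f-K_x(w-\bar{w})$. Then $C_{z w}=C_{x_f w}-K_x C_{ww}=0$, so $z$ is uncorrelated with $w$; since $(z,w)$ is again jointly Gaussian, $z\stindep w$, hence $\EXP{z|w}=\EXP{z}=0$, and therefore $\EXP{x_f|w}=\bar{x}_f+K_x(w-\bar{w})$. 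By the uniqueness part of \refT{cond-expect-orthog} this makes $w\mapsto\bar{x}_f+K_x(w-\bar{w})$ a version of the optimal map $\Phi_x$; it is the genuinely linear operator $K_x\in\E{L}(\C{Y},\C{X})$ precisely when — as is tacitly assumed here — the prior and noise means vanish, and in general $\Phi_x(w)=\bar{x}_f+K_x(w-\bar{w})$.

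For steps (iii)--(iv): since $\Phi_x$ is affine, the additive constants cancel in $\Phi_x(\hat{y})-\Phi_x(y(x_f))=K_x\hat{y}-K_x w$, so \refeq{eq:orth-upd-x} becomes
\[ x_a = x_f - K_x w + K_x\hat{y} = (I-K_xH)x_f - K_x v + K_x\hat{y}, \]
which, with the datum $\hat{y}$ fixed, is an affine image of the jointly Gaussian $(x_f,v)$ and hence Gaussian. Using $C_{v x_f}=0$ one gets $C_{x_a x_a}=(I-K_xH)C_{x_f x_f}(I-K_xH)^T+K_x C_{vv}K_x^T$, the second line of \refeq{eq:Gauss-cov}; expanding it and inserting $C_{w x_f}=H C_{x_f x_f}$, $C_{ww}=H C_{x_f x_f}H^T+C_{vv}$ and $K_x C_{ww}=C_{x_f w}$ recovers the first line and moreover shows it equals $C_{x_f x_f}-K_x C_{w x_f}$, the familiar Gaussian-conditioning covariance, independent of $\hat{y}$ as already observed after \refeq{eq:orth-upd-x-var}. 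The conditional mean is correct by Proposition~\ref{prop:corr-CE} / \refeq{eq:corr-mean-x}, equalling $\Phi_x(\hat{y})=K_x\hat{y}$ in the centred case (i.e.\ \refeq{eq:Gauss-mean}; in general $\bar{x}^{|\hat{y}}=\bar{x}_f+K_x(\hat{y}-\bar{w})$). Since a Gaussian law is pinned down by mean and covariance, $x_a$ has exactly the posterior distribution of $x_f$ given $w=\hat{y}$.

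The main obstacle is step (ii): one must know that the affine map is the conditional expectation among \emph{all} measurable maps in $L_0(\C{Y},\C{X})$, not merely among linear ones — and this is exactly where genuine joint Gaussianity of $(x_f,v)$ (not just Gaussianity of the marginals) enters, through ``uncorrelated $\Rightarrow$ independent'', combined with the uniqueness of the projection in \refT{cond-expect-orthog}. It is also the point at which the possible non-invertibility of $C_{ww}$ in the infinite-dimensional setting must be handled, either by restricting to $\overline{\im}\, C_{ww}$ or by simply taking $K_x$ to be the operator that the existence statement of \refT{cond-expect-orthog} already supplies. Everything after that — the Gaussianity of $x_a$ and the reconciliation of the two forms of \refeq{eq:Gauss-cov} — is routine bilinear algebra.
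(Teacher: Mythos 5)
Your proposal is correct in substance, but it takes a genuinely different (much fuller) route than the paper. The paper's own proof is essentially one line: it declares the Gaussian conditioning result ``well known'' and only verifies the algebraic link between \refeq{eq:orth-upd-x-var} and \refeq{eq:Gauss-cov}, namely $\tilde{x}_a = x_f - \Phi_x(y(x_f)) = x_f - K_x w = (I-K_xH)x_f - K_x v$, after which \refeq{eq:Gauss-cov} follows by expanding the covariance and \refeq{eq:Gauss-mean} follows from \refeq{eq:corr-mean-x}. You instead prove the ``well known'' core: the decorrelation construction $z := x_f - \bar{x}_f - K_x(w-\bar{w})$ with $K_x = C_{x_f w}C_{ww}^{-1}$, so that $C_{zw}=0$, joint Gaussianity gives $z \stindep w$, hence $\EXP{z|w}=0$, and the uniqueness in \refT{cond-expect-orthog} identifies the affine map as the conditional expectation among \emph{all} of $L_0(\C{Y},\C{X})$, not just the linear ones. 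This buys a self-contained justification of exactly the claims the paper delegates to the references (linearity of $\Phi_x$, Gaussianity of $x_a$, correctness of the posterior law), and it makes explicit two things the paper glosses over: the centring caveat (the literal formulas $\Phi_x=K_x$ and $\bar{x}^{|\hat{y}}=K_x\hat{y}$ presume zero prior and noise means, otherwise an affine constant appears) and the need for a generalised inverse of $C_{ww}$ in the infinite-dimensional setting. The final algebra reconciling the two forms of \refeq{eq:Gauss-cov} coincides with the paper's computation.

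One point of care: in your step (i) you \emph{derive} independence and joint Gaussianity of $(x_f,v)$ from marginal Gaussianity plus $C_{v x_f}=0$. That implication is false in general --- uncorrelated, marginally Gaussian variables need not be independent or jointly Gaussian. Joint Gaussianity of the pair (equivalently, independence of the Gaussian noise from the Gaussian state, the usual modelling assumption) must be read into the hypothesis, as you yourself note in your closing paragraph and as the paper tacitly does. With that reading your argument is complete; without it the proposition itself would fail, so this is a looseness in the statement you inherited rather than a gap in your reasoning, but ``each Gaussian and uncorrelated, hence jointly Gaussian'' should be stated as an assumption, not an inference.
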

\begin{proof}
  As this is a well known result, we only show the connection of
  \refeq{eq:Gauss-cov} with \refeq{eq:orth-upd-x-var}.  Note that
  \begin{multline*}
    \tilde{x}_a = x_f -  \Phi_x(y(x_f)) = x_f - K_x(\hat{h}(x_f, v))\\
        = x_f -  K_x(w) = (I-K_x H)x_f - K_x v.
  \end{multline*}
  This gives the \refeq{eq:Gauss-cov}, and the \refeq{eq:Gauss-mean}
  follows directly from \refeq{eq:corr-mean-x}.
\end{proof}
This means that in the purely linear Gaussian case described in
Proposition~\ref{prop:corr-var-Gauss} a RV with the correct posterior
distribution is given simply by the process of projection.

In the context of the dynamical system \refeq{eq:dyn} resp.\
\refeq{eq:dyn-l}, where the measurement is denoted by $y_{n+1}$, the
update for the tracking equation is
\begin{eqnarray}  \label{eq:dyn-tr1}
  z_{n+1} &=& \hat{f}(z_n,0,n) + (\Phi_x(y_{n+1}) - \Phi_x(\hat{h}(\hat{f}(z_n,0,n),0))) \\
   & & \quad \text{for the case \refeq{eq:dyn}, resp.\ for \refeq{eq:dyn-l}}
   \notag \\ 
  \label{eq:dyn-tr2}
  z_{n+1} &=& f(z_n) + (\Phi_x(y_{n+1}) - \Phi_x(h(f(z_n)))).
\end{eqnarray}
Again the assimilated or updated state estimate $x_a:=z_{n+1}$ is composed
of two components, the prediction or forecast $x_f := \hat{f}(z_n,0,n)$
resp.\ $x_f := f(z_n)$, and the correction, innovation, or update
\[ x_\infty := (\Phi_x(y_{n+1}) -\Phi_x(\hat{h}(\hat{f}(z_n,0,n),0))) \] 
resp.\ $ x_\infty := (\Phi_x(y_{n+1}) - \Phi_x(h(f(z_n))))$, which takes into
account the difference resulting from the actual measurement
$\hat{y}:= y_{n+1}$ and the forecast measurement
$\hat{h}(\hat{f}(z_n,0,n),0)$ resp. $h(f(z_n))$. 

If the optimal map $\Phi_x$ is not easy to compute, one may want to
replace it by an approximation as in \refeq{eq:orth-upd-g}, say $g$,
so that for example the \refeq{eq:dyn-tr2} would read
\begin{equation}  \label{eq:dyn-tr3}
  z_{n+1} =  f(z_n) + (g(y_{n+1}) - g(h(f(z_n)))) = (f - g\circ h
  \circ f)(z_n) + g(y_{n+1}) ,
\end{equation}
where one may hope to show that if the map $(f - g\circ h \circ f)$ is
a contraction, the difference $x_n - z_n$ will decrease as
$n\to\infty$ \citep{LawLitHGM15}.  Many variations on this theme
exist, especially in the case where both the observation map $h$ and
the update operator $g$ are linear \citep{Sanz-Alonso_Stuart:2014},
\citep{Tarn-Rasis:1976}.

\subsubsection{Getting also the covariance right} \label{SSS:filt-var}
An approximation to a RV which has the required posterior distribution
was constructed in \refSSS{filt-mean}, where at least the mean was
correct.  One may now go a step further and also get the correct
posterior covariance.  As a starting point take the \emph{assimilated}
RV $x_a$ from \refeq{eq:orth-upd-x} that has the correct conditional
mean $\bar{x}^{|\hat{y}}$ from \refeq{eq:corr-mean-x}, but the
covariance, from \refeq{eq:orth-upd-x-var}, is $C_{x_a x_a|\hat{y}} =
\EXP{\tilde{x}_a \otimes \tilde{x}_a|\hat{y}}$.  To get the covariance
and the mean right, we compute what the correct posterior covariance
should be, by computing the optimal map for $R(x) := x \otimes x$.
This gives for the posterior correlation
\begin{equation}  \label{eq:corr-corr}
 \hat{C}_p := \EXP{R(x_f)|\hat{y}} = \EXP{(x_f \otimes x_f)|\hat{y}} =
 \Phi_{x\otimes x}(\hat{y}),
\end{equation}
so that the posterior covariance is
\begin{equation}  \label{eq:upd-cov}
 C_p := \hat{C}_p - \bar{x}^{|\hat{y}} \otimes \bar{x}^{|\hat{y}} =
 \Phi_{x\otimes x}(\hat{y}) - \Phi_x(\hat{y}) \otimes \Phi_x(\hat{y}).
\end{equation}
\begin{prop} \label{prop:upd-mean-cov}
A new RV $x_c$ with the correct posterior covariance \refeq{eq:upd-cov} is
built from $x_a = \bar{x}^{|\hat{y}} + \tilde{x}_a$ in
\refeq{eq:orth-upd-x} by taking
\begin{equation}  \label{eq:upd-mean-cov-x}
  x_c := \bar{x}^{|\hat{y}} + C_p^{1/2} C_{x_a x_a|\hat{y}}^{-1/2} \tilde{x}_a .
\end{equation}
\end{prop}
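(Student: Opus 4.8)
The plan is to note that the new RV $x_c$ in \refeq{eq:upd-mean-cov-x} is obtained from the assimilated RV $x_a$ of \refeq{eq:orth-upd-x} by an affine map whose linear part $B := C_p^{1/2} C_{x_a x_a|\hat{y}}^{-1/2} \in \E{L}(\C{X})$ and whose translation part $\bar{x}^{|\hat{y}} = \Phi_x(\hat{y}) \in \C{X}$ are both deterministic once the value $\hat{y}$ has been observed. Conditional mean and conditional covariance of an affine image of a RV transform in the elementary way, so the whole statement reduces to two short computations plus a check that $B$ makes sense.

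First I would verify that the posterior mean is not disturbed. Writing $x_c = \bar{x}^{|\hat{y}} + B\,\tilde{x}_a$ and using that $\E{R}_\infty$ is linearly closed (Lemma~\ref{L:ortho_L-ortho}), so that $\EXP{\cdot | \F{S}}$ commutes with the fixed operator $B$, together with $\EXP{\tilde{x}_a|y} = 0$ from the proof of Proposition~\ref{prop:corr-CE}, one obtains $\EXP{x_c|y} = \bar{x}^{|\hat{y}} + B\,\EXP{\tilde{x}_a|y} = \bar{x}^{|\hat{y}}$. Hence $x_c$ still has the correct posterior mean \refeq{eq:corr-mean-x} obtained in \refSSS{filt-mean}, and its zero-mean part is $\tilde{x}_c = B\,\tilde{x}_a$.

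Next I would compute the posterior covariance. Since $(Bv)\otimes(Bv) = B\,(v\otimes v)\,B^*$ for any $v$, and $B$ is deterministic, the same manipulation as in Proposition~\ref{prop:corr-var-Gauss} gives $C_{x_c x_c|\hat{y}} = \EXP{\tilde{x}_c \otimes \tilde{x}_c|\hat{y}} = B\,\EXP{\tilde{x}_a \otimes \tilde{x}_a|\hat{y}}\,B^* = B\,C_{x_a x_a|\hat{y}}\,B^*$. Now $C_{x_a x_a|\hat{y}}$ is a self-adjoint non-negative operator, so its square root and the root's inverse are self-adjoint, and $C_p$ of \refeq{eq:upd-cov} is likewise self-adjoint non-negative, so $(C_p^{1/2})^* = C_p^{1/2}$; therefore $B^* = C_{x_a x_a|\hat{y}}^{-1/2} C_p^{1/2}$ and
\[ C_{x_c x_c|\hat{y}} = C_p^{1/2} C_{x_a x_a|\hat{y}}^{-1/2}\, C_{x_a x_a|\hat{y}}\, C_{x_a x_a|\hat{y}}^{-1/2} C_p^{1/2} = C_p^{1/2}\, I\, C_p^{1/2} = C_p , \]
invoking $C_{x_a x_a|\hat{y}}^{-1/2} C_{x_a x_a|\hat{y}} C_{x_a x_a|\hat{y}}^{-1/2} = I$. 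This is exactly the target posterior covariance \refeq{eq:upd-cov}, so $x_c$ has both the correct posterior mean and the correct posterior covariance.

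The one delicate point --- and the main obstacle --- is the meaning of $C_{x_a x_a|\hat{y}}^{-1/2}$, hence of $B$: the conditional covariance operator need not be boundedly invertible, especially in the infinite-dimensional setting where it is typically only trace class, so $C_{x_a x_a|\hat{y}}^{-1/2}$ has to be read as the Moore--Penrose pseudo-inverse of $C_{x_a x_a|\hat{y}}^{1/2}$ on (the closure of) its range, and then $C_{x_a x_a|\hat{y}}^{-1/2} C_{x_a x_a|\hat{y}} C_{x_a x_a|\hat{y}}^{-1/2}$ is the orthogonal projector onto that range rather than the full identity. For the last display to hold verbatim one needs $\im C_p^{1/2}$ to be contained in $\overline{\im\, C_{x_a x_a|\hat{y}}^{1/2}}$ --- automatic in the non-degenerate finite-dimensional case, and otherwise one obtains the correct covariance only on that subspace. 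Everything else is the routine bilinear computation above, which I would relegate accordingly.
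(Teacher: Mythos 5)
Your proposal is correct and follows essentially the same route as the paper's own proof: pull the deterministic operator $C_p^{1/2} C_{x_a x_a|\hat{y}}^{-1/2}$ out of the conditional expectation, use self-adjointness of the square roots, and cancel via $C_{x_a x_a|\hat{y}}^{-1/2}\, C_{x_a x_a|\hat{y}}\, C_{x_a x_a|\hat{y}}^{-1/2} = I$, the conditional mean being unchanged since $\EXP{\tilde{x}_a|\hat{y}}=0$. Your closing caveat that $C_{x_a x_a|\hat{y}}^{-1/2}$ must be read as a pseudo-inverse when the conditional covariance is degenerate or merely trace class (with the range condition $\im C_p^{1/2} \subseteq \overline{\im\, C_{x_a x_a|\hat{y}}^{1/2}}$) is a legitimate refinement that the paper passes over in silence.
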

\begin{proof}
  As $\EXP{x_c|\hat{y}} = \bar{x}^{|\hat{y}}$, one has
  \begin{multline*}
   C_{x_c x_c} =  \EXP{(C_p^{1/2}\,C_{x_a x_a|\hat{y}}^{-1/2}\,\tilde{x}_a) 
   \otimes (C_p^{1/2}\,C_{x_a x_a|\hat{y}}^{-1/2}\,\tilde{x}_a)|\hat{y}}=\\
   C_p^{1/2}\,C_{x_a x_a|\hat{y}}^{-1/2}\,\EXP{\tilde{x}_a\otimes\tilde{x}_a|\hat{y}}
   \,C_{x_a x_a|\hat{y}}^{-1/2}\,C_p^{1/2} = \\ C_p^{1/2}\,C_{x_a x_a|\hat{y}}^{-1/2}\,  
   C_{x_a x_a|\hat{y}}\,C_{x_a x_a|\hat{y}}^{-1/2}\,C_p^{1/2} = 
   C_p^{1/2}\,C_p^{1/2} = C_p,  
  \end{multline*}
  proving that the RV $x_c$ in \refeq{eq:upd-mean-cov-x} has the
  correct posterior covariance.
\end{proof}

Having achieved a RV which has the correct posterior mean and
covariance, it is conceivable to continue in this fashion, building RVs
which match the posterior better and better.  A similar idea, but from
a different starting point, is used in \citep{moselhyYMarz:2011} and
\citep{ParnoTMYM:2015-arxiv}.  In the future, it is planned to combine
these two approaches.

\subsection{Approximations} \label{SS:char-apprx}
In any actual inverse computations several kinds of approximations are
usually necessary.  Should one pursue an approach of sampling form the
posterior distribution \refeq{eq:iIIa} in \refSS{bayes-laplace-thm}
for example, then typically a sampling and a quantisation or binning
approximation is performed, often together with some kernel-estimate
of the density.  All of these processes usually introduce
approximation errors.  Here we want to use methods based on the conditional
expectation, which were detailed in \refSS{cond-expect}.

Looking for example at \refT{Doob-Dynkin-v} and
\refT{cond-expect-orthog} in \refSSS{cond-expect-vector}, one has to
work in the usually infinite dimensional space $\E{R} = \C{R} \otimes
\C{S}$ from \refeq{eq:space-def1} and its subspace $\E{R}_\infty =
\C{R} \otimes \C{S}_\infty$ from \refeq{eq:space-def2}, to minimise
the functional in \refeq{eq:iIII-l-v} to find the optimal map
representing the conditional expectation for a desired function
$R(x)$, \refeq{eq:opt-condex-v} and \refeq{eq:iV-DD-v}, $\Phi$ in the
space $L_0(\C{Y},\C{R})$.  Then one has to construct a RV whose
distribution my be characterised by the conditional expectation, to
represent the posterior measure.  Approximations in this latter
respect were discussed in \refSS{char-p_rv-filt}.  The space
$\E{R}_\infty$ is computationally accessible via $L_0(\C{Y},\C{R})$,
which has to be approximated by some finite dimensional subspace.
This will be discussed in this \refSS{char-apprx}.  Furthermore, the
component spaces of $\E{R} = \C{R} \otimes \C{S}$ are also typically
infinite dimensional, and have in actual computations to be replaced
by finite dimensional subspaces.  This topic will be sketched in
\refS{num-real}.

Computationally we will not be able to deal with the \emph{whole}
space $\E{R}_\infty$, so we look at the effect of approximations.
Assume that $L_0(\C{Y},\C{R})$ in \refeq{eq:iV-DD-v} or
\refeq{eq:iIV-DDl-v} is approximated by subspaces $L_{0,m} \subset
L_0(\C{Y},\C{R})$ with $\E{L}(\C{Y},\C{R}) \subseteq L_{0,m}$, where
$m\in\D{N}$ is a parameter describing the level of approximation and
$L_{0,m} \subset L_{0,k}$ if $m < k$, such that the subspaces
\begin{equation}  \label{eq:iIVxxN}
\E{R}_m =   \{\vphi(y) \; | \; \vphi \in L_{0,m}; \;
\vphi(\hat{h}(x,\vepsilon v)) \in \E{R}\}  \subseteq \E{R}_\infty   
\end{equation}
are linearly closed and their union is dense 
\begin{equation}  \label{eq:clos-L0}
\overline{\bigcup_m \E{R}_m} = \E{R}_\infty,   
\end{equation}
a consistency condition.

To obtain results for the situation where the projection
$P_{\E{R}_\infty}$ is replaced by the orthogonal projection
$P_{\E{R}_m}$ of $\E{R}$ onto $\E{R}_m$, all that is necessary is to
reformulate the \refT{Doob-Dynkin-v} and \refT{cond-expect-orthog}.
\begin{thm} \label{T:cond-expect-orthog-n}
The orthogonal projection $P_{\E{R}_m}$ of the RV $R(x) \in \E{R}$ is
characterised by:
\begin{equation} \label{eq:iIII-n-v}
  P_{\E{R}_m}(R(x)) := \textup{arg min}_{\tilde{R}\in\E{R}_m}
  \; \frac{1}{2}  \;\| R(x) - \tilde{R} \|^2_{\E{R}},
\end{equation}
The \refeq{eq:iIVxxN} implies the existence of a optimal map 
$\Phi_m \in L_{0,m}(\C{Y},\C{R})$ such that
\begin{equation}  \label{eq:opt-condex-n}
  P_{\E{R}_m}(R(x)) = \Phi_m(\hat{h}(x,\vepsilon v)).
\end{equation}
Taking the vanishing of the first variation / G\^ateaux derivative of
the loss-function as a necessary condition for a
minimum leads to a simple geometrical interpretation: the difference
between the original vector-valued RV $R(x)$ and its projection has to
be perpendicular to the subspace $\E{R}_m$: $ \forall \tilde{R} \in \E{R}_m:$
\begin{equation}  \label{eq:iIV-n}
  \ipd{R(x)  - P_{\E{R}_m}(R(x))}{\tilde{R}}_{\E{R}} = 0, \; 
  \text{ i.e. } R(x)  - P_{\E{R}_m}(R(x)) \in \E{R}_m^\perp .
\end{equation}
Rephrasing \refeq{eq:iIII-n-v} with account to \refeq{eq:iIV-n} leads
for the optimal map $\Phi_m \in L_{0,n}(\C{Y},\C{R})$ to
\begin{equation} \label{eq:iV-DD-n}
  P_{\E{R}_m}(R(x)) =  \Phi_m(\hat{h}(x,\vepsilon v)) :=
  \textup{arg min}_{\vphi\in L_{0,m}(\C{Y},\C{R})} \; \|
  R(x)-\vphi(\hat{h}(x,\vepsilon v)) \|^2_{\E{R}},
\end{equation}
and the orthogonality condition of \refeq{eq:iIV-n} leads to
\begin{equation}  \label{eq:iIV-DDl-nR}
  \forall \vphi \in L_{0,m}(\C{Y},\C{R}): \;
  \ipd{R(x)  - \Phi_m(\hat{h}(x,\vepsilon v))}{\vphi(\hat{h}(x,\vepsilon v))}_{\E{R}} = 0 .
\end{equation}
In addition, as $\E{R}_m$ is linearly closed, one obtains the
useful statement 
\begin{equation}  \label{eq:iIV-L-t-n}
  \forall \tilde{R} \in \E{R}_m: \;
  \E{L}(\C{R}) \ni \EXP{(R(x)  - P_{\E{R}_m}(R(x)))\otimes \tilde{R}} = 0.
\end{equation}
or rephrased $\forall \vphi \in L_{0,m}(\C{Y},\C{R})$:
\begin{equation}  \label{eq:iIV-DDl-n}
  \E{L}(\C{R}) \ni \EXP{(R(x)  - \Phi_m(\hat{h}(x,\vepsilon
    v)))\otimes\vphi(\hat{h}(x,\vepsilon v))} = 0 .
\end{equation}
There is a unique minimiser $P_{\E{R}_m}(R(x)) \in \E{R}_m$ to the
problem in \refeq{eq:iIII-n-v}, which satisfies the
\emph{orthogonality} condition \refeq{eq:iIV-n}, which is equivalent
to the the \emph{strong orthogonality condition} \refeq{eq:iIV-L-t-n}.
The minimiser is unique as an element of $\E{R}_m$, but the
mapping $\Phi_m \in L_{0,m}(\C{Y},\C{R})$ in \refeq{eq:iV-DD-n} may not
necessarily be.  It also holds that
\begin{equation}  \label{eq:iIV-P-n}
\|P_{\E{R}_m}(R(x))\|_{\E{R}}^2 = \|R(x)\|_{\E{R}}^2 - 
             \|R(x) - P_{\E{R}_m}(R(x))\|_{\E{R}}^2 .
\end{equation}
Additionally, one has
\begin{equation}  \label{eq:stab-Pn}
  \|P_{\E{R}_m}(R(x))\|_{\E{R}}^2 \leq  \|P_{\E{R}_\infty}(R(x))\|_{\E{R}}^2 .
\end{equation}
\end{thm}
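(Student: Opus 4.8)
The plan is to recognise that \eqref{eq:iIII-n-v}--\eqref{eq:iIV-P-n} are verbatim copies of \refT{Doob-Dynkin-v} and \refT{cond-expect-orthog} with the closed subspace $\E{R}_\infty$ systematically replaced by $\E{R}_m$, so that their proofs transfer line for line, and then to supply the one genuinely new ingredient, the stability estimate \eqref{eq:stab-Pn}.

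For the first part I would argue exactly as in the proofs of \refT{Doob-Dynkin-v} and \refT{cond-expect-orthog}. By the hypotheses collected before the theorem, $\E{R}_m$ is a closed — indeed linearly closed, i.e.\ $L$-invariant — subspace of $\E{R}$, hence itself a Hilbert space; and by its very definition \eqref{eq:iIVxxN} every element of $\E{R}_m$ has the form $\vphi(\hat{h}(x,\vepsilon v))$ with $\vphi \in L_{0,m}(\C{Y},\C{R})$, which is the analogue for $\E{R}_m$ of the Doob--Dynkin representation \eqref{eq:iIVxx-v} — here true by construction rather than via the lemma. The Hilbert projection theorem, equivalently the Lax--Milgram lemma applied to the inner product restricted to the closed subspace $\E{R}_m$ where it is trivially continuous and coercive, then yields existence and uniqueness of the minimiser in \eqref{eq:iIII-n-v}, its representation \eqref{eq:opt-condex-n} through a generally non-unique optimal map $\Phi_m$, the Galerkin orthogonality \eqref{eq:iIV-n} and its restatements \eqref{eq:iV-DD-n} and \eqref{eq:iIV-DDl-nR}. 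The strong-orthogonality forms \eqref{eq:iIV-L-t-n} and \eqref{eq:iIV-DDl-n} follow from \eqref{eq:iIV-n} and \eqref{eq:iIV-DDl-nR} by the same reasoning used in the proof of \refT{Doob-Dynkin-v}: the hypothesis that $\E{R}_m$ is linearly closed puts us in the situation of Proposition~\ref{prop:indep-ortho} and \refL{ortho_L-ortho}, whose relevant assertions \eqref{eq:perp-Lperp} and \eqref{eq:perp-corr-perp} use only $L$-invariance of the subspace, so that weak orthogonality to $\E{R}_m$ is automatically strong orthogonality. Finally \eqref{eq:iIV-P-n} is Pythagoras's theorem applied to the orthogonal decomposition $R(x) = P_{\E{R}_m}(R(x)) + (R(x) - P_{\E{R}_m}(R(x)))$.

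For \eqref{eq:stab-Pn} I would exploit the inclusion $\E{R}_m \subseteq \E{R}_\infty$ which \eqref{eq:iIVxxN} guarantees. Since the feasible set of the minimisation \eqref{eq:iIII-n-v} over $\E{R}_m$ is contained in that of the corresponding minimisation over $\E{R}_\infty$, the attained minima satisfy $\|R(x) - P_{\E{R}_m}(R(x))\|_{\E{R}}^2 \geq \|R(x) - P_{\E{R}_\infty}(R(x))\|_{\E{R}}^2$; subtracting both sides from $\|R(x)\|_{\E{R}}^2$ and invoking \eqref{eq:iIV-P-n} on the left and its analogue \eqref{eq:iIV-P-v} from \refT{cond-expect-orthog} on the right gives \eqref{eq:stab-Pn} at once. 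An equivalent one-liner: $\E{R}_m \subseteq \E{R}_\infty$ gives $\E{R}_\infty^\perp \subseteq \E{R}_m^\perp$, hence $P_{\E{R}_m} = P_{\E{R}_m}\circ P_{\E{R}_\infty}$, and as an orthogonal projection $P_{\E{R}_m}$ is norm non-increasing, so $\|P_{\E{R}_m}(R(x))\|_{\E{R}} = \|P_{\E{R}_m}(P_{\E{R}_\infty}(R(x)))\|_{\E{R}} \leq \|P_{\E{R}_\infty}(R(x))\|_{\E{R}}$.

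There is no real obstacle here: the content is the two earlier theorems plus the elementary monotonicity of orthogonal projections under subspace inclusion. The only points needing a little care are organisational — invoking the $L$-invariance of $\E{R}_m$ exactly where the strong-orthogonality conclusions are drawn, since it is this, via \refL{ortho_L-ortho}, that distinguishes \eqref{eq:iIV-L-t-n} from \eqref{eq:iIV-n}; and resisting the temptation to use the density/consistency condition \eqref{eq:clos-L0}, which plays no role in the fixed-$m$ statements of this theorem and enters only later, when one lets $m\to\infty$ and wants $P_{\E{R}_m}(R(x)) \to P_{\E{R}_\infty}(R(x))$.
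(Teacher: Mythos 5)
Your proposal is correct and follows essentially the same route as the paper: the paper's proof simply notes that everything is contained in \refT{Doob-Dynkin-v} and \refT{cond-expect-orthog} applied to $\E{R}_m$, with the stability estimate \refeq{eq:stab-Pn} a consequence of the inclusion $\E{R}_m \subseteq \E{R}_\infty$. You merely spell out these two steps in more detail (including the observation, also implicit in the paper, that the density condition \refeq{eq:clos-L0} is not needed at fixed $m$), which is entirely consistent with the paper's argument.
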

\begin{proof}
  It is all already contained in \refT{Doob-Dynkin-v} and
  \refT{cond-expect-orthog} when applied to $\E{R}_m$.  The stability
  condition \refeq{eq:stab-Pn} is due to the simple fact that $\E{R}_m
  \subseteq \E{R}_\infty$.
\end{proof}

From the consistency condition, the stability \refeq{eq:stab-Pn} as
shown in \refT{cond-expect-orthog-n}, and \emph{C\'ea's} lemma, one
immediately obtains:
\begin{thm}  \label{T:Cea-Ln}
For all RVs $R(x) \in \E{R}$, 
the sequence $R_m := P_{\E{R}_m}(R(x))$ converges to
$R_\infty := P_{\E{R}_\infty}(R(x))$:
\begin{equation}  \label{eq:iIVxxC}
\lim_{m\to\infty} \|R_\infty - R_m \|^2_{\E{R}} = 0.      
\end{equation}
\end{thm}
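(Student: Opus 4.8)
The plan is to reduce the statement to a density-plus-monotonicity argument, exploiting that each $\E{R}_m$ is a \emph{closed} subspace contained in $\E{R}_\infty$. First I would note the elementary Hilbert-space fact that, since $\E{R}_m \subseteq \E{R}_\infty$, the orthogonal projections compose, $P_{\E{R}_m} = P_{\E{R}_m}\circ P_{\E{R}_\infty}$ (projecting onto a closed subspace of a closed subspace may be carried out in two stages). Applied to $R(x)$ this gives $R_m = P_{\E{R}_m}(R_\infty)$, so that by the orthogonality characterisation of \refT{cond-expect-orthog-n} --- equivalently, C\'ea's lemma, which here has continuity and coercivity constants both equal to $1$ because the bilinear form in question is the inner product itself --- the approximant $R_m$ is the \emph{best} approximation to $R_\infty$ from $\E{R}_m$:
\[ \|R_\infty - R_m\|_{\E{R}} = \min_{\tilde R \in \E{R}_m} \|R_\infty - \tilde R\|_{\E{R}} . \]

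Next I would bring in the two structural hypotheses on the family $\{\E{R}_m\}$. Nestedness, $\E{R}_m \subseteq \E{R}_k$ for $m<k$, together with \refeq{eq:stab-Pn} and \refeq{eq:iIV-P-n} of \refT{cond-expect-orthog-n}, makes the numbers $d_m := \|R_\infty - R_m\|_{\E{R}}$ non-increasing, hence convergent to some $d \ge 0$. The consistency condition \refeq{eq:clos-L0}, $\overline{\bigcup_m \E{R}_m} = \E{R}_\infty$, then forces $d = 0$: since $R_\infty \in \E{R}_\infty$, for every $\vepsilon > 0$ there are an index $m_0$ and a $\tilde R \in \E{R}_{m_0}$ with $\|R_\infty - \tilde R\|_{\E{R}} < \vepsilon$, whence $d_{m_0} \le \|R_\infty - \tilde R\|_{\E{R}} < \vepsilon$, and by monotonicity $d_m < \vepsilon$ for all $m \ge m_0$. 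Therefore $d_m \to 0$, which is exactly \refeq{eq:iIVxxC}.

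An equivalent route, should one wish to avoid the composition-of-projections step, uses Pythagoras directly: because $R(x) - R_\infty \perp \E{R}_\infty \supseteq \E{R}_m$, one has $\|R_\infty - R_m\|^2_{\E{R}} = \|R(x) - R_m\|^2_{\E{R}} - \|R(x) - R_\infty\|^2_{\E{R}}$ (the analogue of \refeq{eq:iIV-P-n} with $R_\infty$ subtracted off), and then $\|R(x) - R_m\|_{\E{R}} = \min_{\tilde R\in\E{R}_m}\|R(x) - \tilde R\|_{\E{R}} \downarrow \min_{\tilde R\in\E{R}_\infty}\|R(x) - \tilde R\|_{\E{R}} = \|R(x) - R_\infty\|_{\E{R}}$ by the very same density argument, so the difference of squares tends to $0$.

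The reasoning is almost entirely formal; the only point needing a word of care --- and the one I would flag as the main obstacle, modest as it is --- is the passage to the infinite-dimensional setting. One must be sure that ``best approximation'' is well defined and attained in each $\E{R}_m$, which is guaranteed because $\E{R}_m$ was assumed \emph{closed} in \refeq{eq:iIVxxN} so that \refT{cond-expect-orthog-n} applies verbatim, and that $R_\infty$ genuinely belongs to the closure in \refeq{eq:clos-L0}. No quantitative rate is asserted, so once these two facts are in hand the conclusion follows from density together with monotone convergence of the error sequence.
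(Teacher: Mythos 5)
Your proof is correct and follows essentially the same route as the paper: its own (very terse) proof likewise combines well-posedness, the stability of the projections \refeq{eq:stab-Pn}, the consistency condition \refeq{eq:clos-L0}, and C\'ea's lemma, which in this symmetric coercive setting is exactly the best-approximation property you obtain from $P_{\E{R}_m}=P_{\E{R}_m}\circ P_{\E{R}_\infty}$. You simply spell out the density-plus-monotonicity details (and an equivalent Pythagoras variant) that the paper compresses into the phrase ``application of C\'ea's lemma''.
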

\begin{proof}
Well-posedness is a direct consequence of \refT{cond-expect-orthog}.
As the $P_{\E{R}_m}$ are orthogonal projections
onto the subspaces $\E{R}_m$, their norms are hence all equal to 
unity --- a stability condition, as shown in \refeq{eq:stab-Pn}.
Application of C\'ea's lemma then directly yields \refeq{eq:iIVxxC}.
\end{proof}

\subsubsection{Approximation by polynomials} \label{SSS:approx-Phi-poly}
Here we choose the subspaces of polynomials up to degree $m$ for the purpose
of approximation, i.e.\
\[ \E{R}_m := \overline{\spn}\;\{ \vphi \; | \; \vphi(\hat{h}(x,
\vepsilon v)) \in \E{R},\;  \; \vphi \in \C{P}_m(\C{Y},\C{X}) \}, \]
where $\C{P}_m(\C{Y},\C{X}) \subset L_0(\C{Y},\C{X})$ are the
polynomials of degree at most $m$ on $\C{Y}$ with values in $\C{X}$.
We may write $\psi_m \in \C{P}_m(\C{Y},\C{X})$ as
\begin{equation}  \label{eq:n-deg-pol}
 \psi_m(y) := \Hf{0}{H} + \Hf{1}{H}y + \dotsb 
+ \Hf{k}{H}y^{\vee k}+\dotsb + \Hf{m}{H}y^{\vee m},
\end{equation}
where $\Hf{k}{H} \in \E{L}^k_s(\C{Y},\C{R})$ is symmetric
and $k$-linear; and $y^{\vee k} := \overbrace{y \vee \ldots \vee y}^{k}
:= \text{Sym}(y^{\otimes k})$ is the symmetric tensor product of
the $y$'s taken $k$ times with itself.  Let us remark here that the form of
\refeq{eq:n-deg-pol}, given in monomials, is numerically not a good form---except
for very low $m$---and straightforward use in computations is not recommended.
The relation \refeq{eq:n-deg-pol} could be re-written in some orthogonal 
polynomials---or in fact any other system of multi-variate functions; this
generalisation will be considered in \refSSS{approx-Phi-gen}.
For the sake of conceptual simplicity, we stay with \refeq{eq:n-deg-pol} and
then have that for any RV $R(x) \in \E{R}$
\begin{equation} \label{eq:n-deg-pol-q}
  \Phi_{R,m}(R(x)) :=
  \psi_{R,m}(y) := \Hf{0}{H} + \dotsb + \dotsb + \Hf{m}{H}y^{\vee m}
  =: \Psi_{R,m}(\Hf{0}{H},\dots,\Hf{m}{H})
\end{equation}
the optimal map in \refeq{eq:opt-condex-n} from
\refT{cond-expect-orthog-n} --- where we have added an index $R$ to
indicate that it depends on the RV $R(x)$, but for simplicity omitted
this index on the coefficient maps $\Hf{k}{H}$ --- is a function
$\Psi_{R,m}$ of the coefficient maps $\Hf{k}{H}$.  The stationarity or
orthogonality condition \refeq{eq:iIV-DDl-n} can then be written in
terms of the $\Hf{k}{H}$.  We need the following abbreviations for any
$k,\ell\in\D{N}_0$ and $p \in \E{R}, v \in \E{Y}$:
\[\ipj{p \otimes v^{\vee k}} := 
     \EXP{p \otimes v^{\vee k}} = \int_{\Omega} p(\omega)\otimes
  v(\omega)^{\vee k} \, \D{P}(\di \omega) \]
and
\[ \Hf{k}{H}\ipj{y^{\vee(\ell+k)}} := 
\ipj{y^{\vee\ell}\vee (\Hf{k}{H}y^{\vee k})}=
\EXP{y^{\vee\ell}\vee (\Hf{k}{H}y^{\vee k})} .\]

We may then characterise the $\Hf{k}{H}$ in the following way:
\begin{thm} \label{T:n-vers}
With $\Psi_{R,m}$ from \refeq{eq:n-deg-pol-q}, the stationarity condition
\refeq{eq:iIV-DDl-n} becomes, by the chain rule, for any $m\in\D{N}_0$
\begin{equation}  \label{eq:cond-H}
\forall \ell=0,\dotsc,m:\quad \sum_{k=0}^m \Hf{k}{H}\ipj{y^{\vee(\ell+k)}} = 
      \ipj{R(x)\otimes y^{\vee\ell}}.
\end{equation}
The Hankel operator matrix $(\ipj{y^{\vee(\ell+k)}})_{\ell,k} $ in the
linear equations \refeq{eq:cond-H} is symmetric and positive
semi-definite, hence the system \refeq{eq:cond-H} has a solution,
unique in case the operator matrix is actually definite.
\end{thm}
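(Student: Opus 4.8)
The plan is to read off \eqref{eq:cond-H} as the \emph{normal equations} of the least-squares problem of \refT{cond-expect-orthog-n}, specialised to the approximating subspace $\E{R}_m$ built from the polynomials $\C{P}_m(\C{Y},\C{R})$, so essentially I only have to make that theorem explicit. First note that every $\vphi\in\C{P}_m(\C{Y},\C{R})$ decomposes as $\vphi(y)=\sum_{\ell=0}^m B_\ell\,y^{\vee\ell}$ with $B_\ell\in\E{L}^\ell_s(\C{Y},\C{R})$, so the orthogonality condition \eqref{eq:iIV-DDl-nR} (equivalently \eqref{eq:iIV-DDl-n}) need only be imposed against the monomial maps $y\mapsto B\,y^{\vee\ell}$, $\ell=0,\dots,m$, with $B\in\E{L}^\ell_s(\C{Y},\C{R})$ arbitrary. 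Inserting the ansatz \eqref{eq:n-deg-pol-q}, $\Phi_{R,m}(\hat h(x,\vepsilon v))=\psi_{R,m}(y)=\sum_{k=0}^m\Hf{k}{H}y^{\vee k}$, is precisely what the chain rule produces: writing the loss as a function of the coefficient maps via $\beta_{R(x)}(\Psi_{R,m}(\Hf{0}{H},\dots,\Hf{m}{H}))$ as in \eqref{eq:n-deg-pol-q}, its G\^ateaux derivative in the $\Hf{\ell}{H}$-slot in direction $B$ is $-\,\EXP{\ip{R(x)-\psi_{R,m}(y)}{B\,y^{\vee\ell}}_{\C{R}}}$, since $\partial\psi_{R,m}/\partial\Hf{\ell}{H}$ in direction $B$ is the map $y\mapsto B\,y^{\vee\ell}$. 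Requiring all of these to vanish gives, for each $\ell=0,\dots,m$ and every such $B$,
\[
 \EXP{\ip{\,R(x)-\textstyle\sum_{k=0}^m\Hf{k}{H}y^{\vee k}}{B\,y^{\vee\ell}}_{\C{R}}}=0 .
\]

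Next I would strip off the arbitrary test operator $B$. Since $\ip{z}{B\,y^{\vee\ell}}_{\C{R}}=\ip{B^{*}z}{y^{\vee\ell}}_{\C{Y}^{\vee\ell}}$ and $B$ — hence $B^{*}$ — ranges over all (symmetric) operators, vanishing of the displayed expectation for all $B$ forces the $\E{L}^\ell_s$-valued second moment $\EXP{(R(x)-\sum_k\Hf{k}{H}y^{\vee k})\otimes y^{\vee\ell}}$ itself to vanish. Expanding the sum and substituting the abbreviations fixed just before the theorem, $\Hf{k}{H}\ipj{y^{\vee(\ell+k)}}=\EXP{y^{\vee\ell}\vee(\Hf{k}{H}y^{\vee k})}$ and $\ipj{R(x)\otimes y^{\vee\ell}}=\EXP{R(x)\otimes y^{\vee\ell}}$, converts this into \eqref{eq:cond-H} for each $\ell$. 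The only care needed in this step is the bookkeeping of symmetric tensor powers and of adjoints of $k$-linear maps in the possibly infinite-dimensional $\C{Y}^{\vee\ell}$; since only the action on the symmetric tensors $y^{\vee\ell}$ enters, restricting to symmetric test operators loses nothing, and all pairings are finite because $R(x)\in\E{R}$ and the monomials $y^{\vee k}$ lie in $\E{R}_m\subseteq\E{R}$ by the very definition of $\E{R}_m$.

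For the structural assertions, self-adjointness of the Hankel matrix $(\ipj{y^{\vee(\ell+k)}})_{\ell,k=0}^m$ — its $(\ell,k)$ block is the adjoint of its $(k,\ell)$ block — is immediate from $(\EXP{y^{\vee\ell}\otimes y^{\vee k}})^{*}=\EXP{y^{\vee k}\otimes y^{\vee\ell}}$. Positive semi-definiteness follows because, for any coefficient tuple $(\Hf{0}{H},\dots,\Hf{m}{H})$, the associated quadratic form of the Hankel matrix equals
\[
 \EXP{\Bigl\|\,\textstyle\sum_{k=0}^m\Hf{k}{H}\,y^{\vee k}\Bigr\|_{\C{R}}^2}
  = \Bigl\|\,\textstyle\sum_{k=0}^m\Hf{k}{H}\,y^{\vee k}\Bigr\|_{\E{R}}^2\ \ge 0 ,
\]
which also identifies the kernel of the Hankel matrix with the tuples whose associated polynomial RV vanishes. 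Existence of a solution of \eqref{eq:cond-H} is then inherited from \refT{cond-expect-orthog-n} rather than proved from the matrix directly: the projection $P_{\E{R}_m}(R(x))\in\E{R}_m$ exists, and as $\E{R}_m$ is spanned by the $y^{\vee k}$ it admits \emph{some} coefficient maps $\Hf{k}{H}$, which necessarily satisfy the stationarity condition, i.e.\ \eqref{eq:cond-H}; equivalently the right-hand side of \eqref{eq:cond-H} lies in the range of the self-adjoint, positive semi-definite Hankel operator. If that operator is definite, the quadratic-form identity above forces any two solution tuples to agree, which gives uniqueness of the maps $\Hf{k}{H}$ — while uniqueness of $P_{\E{R}_m}(R(x))$ as an element of $\E{R}_m$ holds unconditionally by \refT{cond-expect-orthog-n}.

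I expect the proof to carry no real analytic difficulty once \refT{cond-expect-orthog-n} is in hand; the one place that demands attention is making the symmetric-tensor and adjoint identities in the ``strip off $B$'' step and in the Hankel-symmetry claim fully precise in the infinite-dimensional setting, which is routine multilinear bookkeeping.
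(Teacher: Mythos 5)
Your proposal is correct and follows essentially the same route as the paper's own (much terser) proof: the normal equations \refeq{eq:cond-H} arise from the chain rule / G\^ateaux derivative of the quadratic loss applied to the orthogonality condition \refeq{eq:iIV-DDl-n}, symmetry of the Hankel block matrix is immediate, and positive semi-definiteness is the convexity of the quadratic functional, which you express equivalently through the identity of the quadratic form with $\|\sum_k\Hf{k}{H}y^{\vee k}\|^2_{\E{R}}$. Your explicit treatment of existence (solvability inherited from the projection in \refT{cond-expect-orthog-n}) and of uniqueness under definiteness merely spells out what the paper leaves implicit, so no genuine difference in method.
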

\begin{proof}
The relation \refeq{eq:cond-H} is the result of straightforward
application of the chain rule to the \refeq{eq:iIV-DDl-n}.

The symmetry of the operator matrix is obvious---the $\ipj{y^{\vee k}}$ are the
coefficients---and positive semi-definiteness follows
easily from the fact that it is the gradient of the functional in
\refeq{eq:iV-DD-n}, which is convex.
\end{proof}
Observe that the operator matrix is independent of the RV $R(x)$ for
which the ncomputation is performed.  Only the right hand side is
influenced by $R(x)$.

The system of operator equations \refeq{eq:cond-H} may be written in
more detailed form as:
\begin{alignat*}{6}
&\ell = 0:\; \Hf{0}{H} &\dotsb  &+ \Hf{k}{H}\ipj{y^{\vee k}}
   &\dotsb &+ \Hf{m}{H}\ipj{y^{\vee m}}  =& \ipj{R(x)}, \\
&\ell = 1: \; \Hf{0}{H}\ipj{y} &\dotsb  &+ 
   \Hf{k}{H}\ipj{y^{\vee (1+k)}} &\dotsb &+ \Hf{m}{H}\ipj{y^{\vee (1+m)}}
     =& \ipj{R(x) \otimes y}, \\
&\vdots &\dotso   & & \vdots & & \vdots \\
&\ell = m:\; \Hf{0}{H}\ipj{y^{\vee m}} &\dotsb  &+ 
    \Hf{k}{H}\ipj{y^{\vee (k+m)}} &\dotsb &+  \Hf{m}{H}\ipj{y^{\vee 2m}} 
    =& \ipj{R(x) \otimes y^{\vee m}}.
\end{alignat*}

Using \emph{`symbolic index'} notation a la Penrose --- the reader may
just think of indices in a finite dimensional space with orthonormal
basis --- the system \refeq{eq:cond-H} can be given yet another form:
denote in symbolic index notation $R(x) = (R^\imath), y = (y^\jmath)$,
and $\Hf{k}{H} =
(\tensor*[^k]{H}{^\imath_{\jmath_1}_{\dotso}_{\jmath_k}})$, then
\refeq{eq:cond-H} becomes, with the use of the Einstein convention of
summation (a tensor contraction) over repeated indices, and with the
symmetry explicitly indicated:
\begin{multline}  \label{eq:symbolic}
\forall \ell=0,\dotsc,m; \;  \jmath_1 \le \ldots \le \jmath_{\ell}\le \ldots \le
 \jmath_{\ell + k}\le \ldots \le \jmath_{\ell+m}:\\
\ipj{y^{\jmath_1}\dotsm y^{\jmath_\ell}} \,
   (\tensor*[^0]{H}{^\imath}) + \dotsb +
  \ipj{y^{\jmath_1}\dotsm y^{\jmath_{\ell+1}}\dotsm y^{\jmath_{\ell+k}}} \,
     (\tensor*[^k]{H}{^\imath_{\jmath_{\ell+1}}_{\dotso}_{\jmath_{\ell+k}}}) +\\ 
     \dotsb +  \ipj{y^{\jmath_1}\dotsm y^{\jmath_{\ell+1}}
     \dotsm y^{\jmath_{\ell+m}}} \,
   (\tensor*[^m]{H}{^\imath_{\jmath_{\ell+1}}_{\dotso}_{\jmath_{\ell+m}}}) =
  \ipj{R^\imath y^{\jmath_1}\dotsm y^{\jmath_\ell}}.
\end{multline}
We see in this representation that the matrix does \emph{not} depend
on $\imath$---it is identically \emph{block diagonal} after appropriate
reordering, which makes the solution of \refeq{eq:cond-H} or
\refeq{eq:symbolic} much easier.

Some special cases are: for $m=0$---\emph{constant} functions.  One does
not use any information from the measurement --- and from
\refeq{eq:cond-H} or \refeq{eq:symbolic} one has
\[ \Phi_{R,0}(R(x)) = \psi_{R,0}(y) = \Hf{0}{H} = \ipj{R} = \EXP{R} =
\bar{R}. \]
Without any information, the conditional expectation is equal to the
unconditional expectation.  The update corresponding to
\refeq{eq:orth-upd} --- actually \refeq{eq:orth-upd-g} as we are
approximating the map $\Phi_R$ by $g_R = \Phi_{R,0}$ --- then becomes
$R_a \approx R_{a,0} = R(x_f) = R_f$, as $R_\infty = 0$ in this case;
the assimilated quantity stays equal to the forecast.  This was to be
expected, is not of much practical use, but is a consistency check.

The case $m=1$ in \refeq{eq:cond-H} or \refeq{eq:symbolic} is more interesting,
allowing up to \emph{linear} terms:
\begin{alignat*}{5}
&\Hf{0}{H} &+ &\Hf{1}{H}\ipj{y}  &= &\ipj{R(x)} =  \bar{R} \\
&\Hf{0}{H}\ipj{y} &+ &\Hf{1}{H}\ipj{y \vee y} &= &\ipj{R(x) \otimes y}.
\end{alignat*}
Remembering that $C_{Ry} = \ipj{R(x) \otimes y}-\ipj{R}\otimes\ipj{y}$ and
analogous for  $C_{yy}$, one obtains by tensor multiplication
with $\ipj{R(x)}$ and symbolic Gaussian elimination
\begin{alignat*}{3}
\Hf{0}{H} &= &\ipj{R} &- \Hf{1}{H}\ipj{y} =  \bar{R}  - \Hf{1}{H}\bar{y}\\
\Hf{1}{H}(\ipj{y \vee y}-\ipj{y}\vee\ipj{y})= \Hf{1}{H}C_{yy}  &= 
    &\ipj{R(x) \otimes y} &- \ipj{R}\otimes\ipj{y} = C_{Ry}.
\end{alignat*}
This gives 
\begin{align} \label{eq:LBU-n1-1H}
\Hf{1}{H} &= C_{Ry}C_{yy}^{-1} =: K\\
\Hf{0}{H} &= \bar{R} - K \bar{y}. \label{eq:LBU-n1-0H}
\end{align}
where $K$ in \refeq{eq:LBU-n1-1H} is the well-known \emph{Kalman} gain operator 
\citep{Kalman}, so that finally
\begin{equation}   \label{eq:cond-ex-n1}
\Phi_{R,1}(R(x)) = \psi_{R,1}(y) = \Hf{0}{H} + \Hf{1}{H}y = 
      \bar{R} + C_{Ry}C_{yy}^{-1}(y - \bar{y}) = \bar{R} + K(y - \bar{y}).
\end{equation}

The update corresponding to \refeq{eq:orth-upd} --- again actually
\refeq{eq:orth-upd-g} as we are approximating the map $\Phi_R$ by
$g_R = \Phi_{R,1}$ --- then becomes
\begin{multline} \label{eq:LBU-n1}
  R_a \approx R_{a,1} = R(x_f) + \left( (\bar{R} + K(\hat{y}-\bar{y}))
  - (\bar{R} + K(y(x_f)-\bar{y}))\right) = \\
   R_f + K(\hat{y} - y(x_f)) = R_f + R_{\infty,1}.
\end{multline}
This may be called a \emph{linear} Bayesian update (LBU), and is
similar to the `Bayes linear' approach \citep{Goldstein2007}.  It is
important to see \refeq{eq:LBU-n1} as a symbolic expression,
especially the inverse $C_{yy}^{-1}$ indicated in
\refeq{eq:cond-ex-n1} should not really be computed, especially when
$C_{yy}$ is ill-conditioned or close to singular.  The inverse can in
that case be replaced by the \emph{pseudo-inverse}, or rather the
computation of $K$, which is in linear algebra terms a
\emph{least-squares} approximation, should be done with orthogonal
transformations and not by elimination.  We will not dwell on these
well-known matters here.  It is also obvious that the constant term in
\refeq{eq:cond-ex-n1} --- or even \refeq{eq:n-deg-pol-q} for that
matter --- is of no consequence for the update filter, as it cancels
out.

The case $m=2$ can still be solved symbolically, the system to be
solved is from \refeq{eq:cond-H} or \refeq{eq:symbolic}:
\begin{alignat*}{7}
&\Hf{0}{H} &+ &\Hf{1}{H}\ipj{y} &+ &\Hf{2}{H}\ipj{y^{\vee 2}}  &= &\ipj{R} \\
&\Hf{0}{H}\ipj{y} &+ &\Hf{1}{H}\ipj{y^{\vee 2}} &+ &\Hf{2}{H}\ipj{y^{\vee 3}}&
= &\ipj{R \otimes y}\\
&\Hf{0}{H}\ipj{y^{\vee 2}} &+ &\Hf{1}{H}\ipj{y^{\vee 3}} &+ 
&\Hf{2}{H}\ipj{y^{\vee 4}}&= &\ipj{R \otimes y^{\vee 2}}.
\end{alignat*}
After some symbolic elimination steps one obtains
\begin{alignat*}{7}
&\Hf{0}{H} &+ &\Hf{1}{H}\ipj{y} &+ &\Hf{2}{H}\ipj{y^{\vee 2}}  &= &\bar{R} \\
&0 &+ &\Hf{1}{H} &+ &\Hf{2}{H}\; \vek{F} &= &K\\
&0 &+ &0 &+ &\Hf{2}{H}\; \tnb{G} &= &\tns{E},
\end{alignat*}
with the Kalman gain operator $K\in(\C{R}\otimes\C{Y})^*$ from \refeq{eq:LBU-n1-1H},
the third order tensors $\vek{F}\in(\C{Y}^{\otimes 3})^*$ given in \refeq{eq:NLBU-n2-F},
and $\tns{E}\in (\C{R}\otimes\C{Y}^{\otimes 2})^*$ given in \refeq{eq:NLBU-n2-E},
and the fourth order tensor $\tnb{G}\in(\C{Y}^{\otimes 4})^*$ given
in \refeq{eq:NLBU-n2-G}:
\begin{align} \label{eq:NLBU-n2-F}
\vek{F} &= \left(\ipj{y^{\vee 3}} - \ipj{y^{\vee 2}} \vee \ipj{y}\right) 
 C_{yy}^{-1}, \\  \label{eq:NLBU-n2-E}
 \tns{E} &= \ipj{R \otimes y^{\vee 2}} - \bar{R}\otimes\ipj{y^{\vee 2}} -
      K  \left(\ipj{y^{\vee 3}} - 
     \ipj{y} \vee \ipj{y^{\vee 2}} \right)\\  \label{eq:NLBU-n2-G}
\tnb{G} &= \left(\ipj{y^{\vee 4}}-\ipj{y^{\vee 2}}^{\vee 2}\right) - 
     \vek{F} \cdot \left(\ipj{y^{\vee 3}} - 
     \ipj{y} \vee \ipj{y^{\vee 2}} \right),
\end{align}
where the single central dot `$\cdot$' denotes as usual a contraction
over the appropriate indices, and a colon `:' a double contraction.
From this one easily obtains the solution
\begin{align} \label{eq:NLBU-n2-2H}
 \Hf{2}{H}&= \tns{E}:\tnb{G}^{-1} \\  \label{eq:NLBU-n2-1H}
 \Hf{1}{H} &= K -  \Hf{2}{H}  \vek{F} \\  \label{eq:NLBU-n2-0H}
 \Hf{0}{H} &= \bar{R} - (K-\Hf{1}{H})  \bar{y} - \Hf{2}{H}  \ipj{y^{\vee 2}} =
 \bar{R} - \Hf{2}{H}(\vek{F} \cdot \bar{y}  + \ipj{y^{\vee 2}}).
\end{align}

The update corresponding to \refeq{eq:orth-upd} --- again actually
\refeq{eq:orth-upd-g} as we are approximating the map $\Phi_R$ now by
$g_R = \Phi_{R,2}$ --- then becomes
\begin{multline} \label{eq:QBU-n2}
  R_a \approx R_{a,2} = R(x_f) + \left( (\Hf{2}{H}\hat{y}^{\vee 2} +
    \Hf{1}{H}\hat{y})  - (\Hf{2}{H}y(x_f)^{\vee 2} +
    \Hf{1}{H} y(x_f)) \right) = \\
   R_f + \left( \tns{E}:\tnb{G}^{-1}:\left(\hat{y}^{\vee 2} -y(x_f)^{\vee 2}\right) +
     (K-\tns{E}:\tnb{G}^{-1}:\vek{F})(\hat{y}-y(x_f)) \right)\\  = R_f + R_{\infty,2}.
\end{multline}
This may be called a \emph{quadratic} Bayesian update (QBU), and it is
clearly an extension of \refeq{eq:LBU-n1}.

\subsubsection{The Gauss-Markov-Kalman filter} \label{SSS:GMK-filter}
The $m=1$ version of \refT{n-vers} is well-known for the special case $R(x):=x$,
and we rephrase this generalisation of the well-known \emph{Gauss-Markov}
theorem from \citep{Luenberger1969}  Chapter 4.6, Theorem 3:
\begin{prop} \label{prop:n-1-vers} 
  The update $x_{a,1}$, minimising $\|x_f - \cdot\|^2_{\E{X}}$ over
  all elements generated by affine mappings (the up to $m=1$ case of
  \refT{n-vers}) of the measurement $\hat{y}$ with predicted
  measurement $y(x_f)$ is given
\begin{equation}   \label{eq:iVII}
  x_{a,1} = x_f + K(\hat{y} - y(x_f)),
\end{equation}
where the operator $K$ is the Kalman gain from  \refeq{eq:LBU-n1-1H}
and \refeq{eq:LBU-n1}. 
\end{prop}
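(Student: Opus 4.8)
The plan is to read off \refT{n-vers} in its $m=1$ case with the particular choice $R(x):=x$, and then feed the resulting optimal affine map into the filter construction \refeq{eq:orth-upd-g} (equivalently the $R(x)=x$ version \refeq{eq:orth-upd-x}). First I would observe that the class of ``elements generated by affine mappings of the measurement'' is precisely the linearly closed subspace $\E{R}_1$ of \refeq{eq:iIVxxN} with $m=1$, namely the image under $y\mapsto\Hf{0}{H}+\Hf{1}{H}y$ of all pairs $(\Hf{0}{H},\Hf{1}{H})\in\C{X}\times\E{L}(\C{Y},\C{X})$ for which the result lies in $\E{X}$. By \refT{cond-expect-orthog-n} the quadratic loss $\tilde R\mapsto\tfrac12\|x_f-\tilde R\|^2_{\E{X}}$ then has a \emph{unique} minimiser $P_{\E{R}_1}(x_f)=\Phi_{x,1}(y(x_f))$ on this closed subspace, characterised by the orthogonality / normal equations, which in the present case are exactly the $m=1$ instance of the block system \refeq{eq:cond-H}; the relevant $2\times 2$ Hankel operator matrix is symmetric and positive semi-definite, so a solution exists.

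Second, I would simply quote the elimination already performed in the text preceding \refeq{eq:cond-ex-n1}: using $C_{xy}=\ipj{x\otimes y}-\bar{x}\otimes\bar{y}$ and $C_{yy}=\ipj{y\vee y}-\bar{y}\vee\bar{y}$ one gets $\Hf{1}{H}=C_{xy}C_{yy}^{-1}=K$, the Kalman gain of \refeq{eq:LBU-n1-1H}, and $\Hf{0}{H}=\bar{x}-K\bar{y}$ from \refeq{eq:LBU-n1-0H}, hence
\[ \Phi_{x,1}(y)=\bar{x}+K(y-\bar{y}). \]
As in the remark after \refeq{eq:cond-ex-n1}, I would note that $C_{yy}^{-1}$ here is only symbolic and should in practice be replaced by a pseudo-inverse or an orthogonal-transformation least-squares solve; this plays no role in the statement. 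Then I would substitute $\Phi_{x,1}$ into \refeq{eq:orth-upd-g}, obtaining
\[ x_{a,1}=x_f+\bigl(\Phi_{x,1}(\hat{y})-\Phi_{x,1}(y(x_f))\bigr)
   = x_f+\bigl(K(\hat{y}-\bar{y})-K(y(x_f)-\bar{y})\bigr) = x_f+K(\hat{y}-y(x_f)), \]
since the constant term $\bar{x}$ and the $K\bar{y}$ contribution cancel in the innovation difference; this is precisely \refeq{eq:iVII}, and it coincides with \refeq{eq:LBU-n1}. Finally I would record that this is the Hilbert-space, operator-valued restatement of the classical Gauss--Markov theorem of \citep{Luenberger1969}, Ch.\ 4.6, Thm.\ 3.

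There is no genuinely hard step here: existence, uniqueness, the normal equations, and the needed semidefiniteness are all already delivered by \refT{cond-expect-orthog-n} and \refT{n-vers}, so the proof is essentially an instantiation. The only two points that deserve a sentence of care are (i) identifying the affine family with the subspace $\E{R}_1$, so that the cited theorems apply verbatim, and (ii) observing that the additive constant in $\Phi_{x,1}$ is irrelevant to the filter because it cancels when one forms $\Phi_{x,1}(\hat{y})-\Phi_{x,1}(y(x_f))$; both are routine, and I expect the write-up to be short.
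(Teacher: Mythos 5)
Your proposal is correct and follows essentially the same route as the paper, which offers no separate proof but treats the proposition as the $m=1$, $R(x):=x$ instantiation of \refT{n-vers}, using exactly the elimination giving \refeq{eq:LBU-n1-1H}--\refeq{eq:LBU-n1-0H} and the update formula \refeq{eq:LBU-n1} with the constant terms cancelling in the innovation. Your two points of care (identifying the affine family with $\E{R}_1$ so \refT{cond-expect-orthog-n} applies, and the cancellation of the constant in $\Phi_{x,1}(\hat{y})-\Phi_{x,1}(y(x_f))$) are precisely the remarks the paper itself makes.
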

The \refeq{eq:iVII} is reminiscent --- actually an extension --- not
only of the well-known \emph{Gauss-Markov} theorem
\citep{Luenberger1969}, but also of the \emph{Kalman} filter
\citep{Kalman} \citep{Papoulis1998/107}, so that we propose to call
\refeq{eq:iVII} the \tbf{Gauss-Markov-Kalman} (GMK) filter (GMKF).

We point out that $x_{a,1}, x_f$, and $y(x_f)$ are RVs, i.e.\
\refeq{eq:iVII} is an equation in $\E{X} = \C{X} \otimes \C{S}$
between RVs, whereas the traditional Kalman filter is an equation in
$\C{X}$.  If the mean is taken in \refeq{eq:iVII}, one obtains the
familiar Kalman filter formula \cite{Kalman} for the update of the
mean, and one may show \citep{opBvrAlHgm12} that \refeq{eq:iVII} also
contains the Kalman update for the covariance by computing
\refeq{eq:orth-upd-x-var} for this case, which gives the familiar
result of Kalman, i.e.\ the Kalman filter is a low-order part of
\refeq{eq:iVII}.

The computational strategy for a typical filter is now to replace and
approximate the---only abstractly given---computation of $x_a$
\refeq{eq:orth-upd-x} by the practically possible calculation of
$x_{a,m}$ as in \refeq{eq:n-deg-pol-q}.  This means that we
approximate $x_a$ by $x_{a,m}$ by using $\E{X}_m \subseteq
\E{X}_\infty$, and rely on \refT{Cea-Ln}.  This corresponds to some loss
of information from the measurement as one uses a smaller subspace for
the projection, but yields a manageable computation.  If the
assumptions of \refT{Cea-Ln} are satisfied, then one can expect for $m$
large enough that the terms in \refeq{eq:n-deg-pol-q} converge to
zero, thus providing an error indicator on when a sufficient accuracy
has been reached.

\subsubsection{Approximation by general functions} \label{SSS:approx-Phi-gen}
The derivation in \refSSS{approx-Phi-poly} was for the special case
where polynomials are used to find a subspace $L_{0,m}(\C{Y},\C{X})$
for the approximation.  It had the advantage of showing the connection
to the `Bayes linear' approach \citep{Goldstein2007}, to the
Gauss-Markov theorem \citep{Luenberger1969}, and to the \emph{Kalman}
filter \citep{Kalman} \citep{Papoulis1998/107}, giving in
\refeq{eq:iVII} of Proposition~\ref{prop:n-1-vers} the
\emph{Gauss-Markov-Kalman} filter (GMKF).

But for a more general approach not limited to polynomials, we proceed
similarly as in \refeq{eq:iIVxxN}, but now concretely assume a set of
linearly independent functions, not necessarily orthonormal, 
\begin{equation}  \label{eq:basis-L0}
  \C{B} := \{\psi_\alpha \; | \; \alpha \in \C{A}, \; \psi_\alpha \in L_0(\C{Y}); \;
   \psi_\alpha(\hat{h}(x,\vepsilon v)) \in \C{S}\}  \subseteq \C{S}_\infty   
\end{equation}
where $\C{A}$ is some countable index set.  Assume now that
\[ \E{S}_\infty = \overline{\spn}\; \C{B}, \] 
i.e.\ $\C{B}$ is a Hilbert basis of $\C{S}_\infty$, again a
consistency condition.

Denote by $\C{A}_k$ a finite part of $\C{A}$ of cardinality $k$, such
that $\C{A}_k \subset \C{A}_\ell$ for $k<\ell$ and $\bigcup_k \C{A}_k
=\C{A}$, and set
\begin{equation}  \label{eq:def-Rn}
\E{R}_k :=  \C{R} \otimes \C{S}_k \subseteq \E{R}_\infty ,
\end{equation}
where the finite dimensional and hence closed subspaces $\C{S}_k$ are
given by
\begin{equation}  \label{eq:def-Sn}
  \C{S}_k := \spn \{\psi_\alpha \; | \; \alpha \in
      \C{A}_k, \; \psi_\alpha \in \C{B} \} \subseteq \C{S} .
\end{equation}
Observe that the spaces $\E{R}_k$ from \refeq{eq:def-Rn} are linearly closed
according to Proposition~\ref{prop:fst-res-lcl}.

\refT{cond-expect-orthog-n} and \refT{Cea-Ln} apply in this case.
For a RV $R(x) \in \E{R}$ we make the following `ansatz' for the
optimal map $\Phi_{R,k}$ such that $P_{\E{R}_k}(R(x)) =
\Phi_{R,k}(\hat{h}(x, \vepsilon v))$:
\begin{equation}  \label{eq:ansatz-psi}
   \Phi_{R,k}(y) = \sum_{\alpha \in \C{A}_k} v_\alpha \psi_\alpha(y),
\end{equation}
with as yet unknown coefficients $v_\alpha \in \C{R}$.  This is a
normal \emph{Galerkin}-ansatz, and \refeq{eq:iIV-DDl-n} from
\refT{cond-expect-orthog-n} can be used to determine these
coefficients.

Take $\C{Z}_k := \D{R}^{\C{A}_k}$ with canonical basis
$\{\vek{e}_\alpha \; | \; \alpha \in \C{A}_k \}$, and let 
\[ \vek{G}_k
:= (\ip{\psi_\alpha(y(x))}{\psi_\beta(y(x))}_{\C{S}})_{\alpha, \beta
  \in \C{A}_k} \in \E{L}(\C{Z}_k)
\]
be the symmetric positive definite Gram matrix of the basis of
$\C{S}_k$; also set
\begin{align*}
  \tnb{v} &:= \sum_{\alpha \in \C{A}_k} \vek{e}_\alpha \otimes v_\alpha
  \in \C{Z}_k \otimes \C{R}, \\
  \tnb{r} &:= \sum_{\alpha \in \C{A}_k} \vek{e}_\alpha \otimes
  \EXP{\psi_\alpha(y(x)) R(x)} \in \C{Z}_k \otimes \C{R}.
\end{align*}
\begin{thm}
  For any $k \in \D{N}$, the coefficients $\{ v_\alpha\}_{\alpha \in
    \C{A}_k}$ of the optimal map $\Phi_{R,k}$ in \refeq{eq:ansatz-psi}
  are given by the unique solution of the Galerkin equation
  \begin{equation} \label{eq:psi-Galerkin}
    (\vek{G}_k \otimes I_{\C{R}}) \tnb{v} = \tnb{r} .
  \end{equation}
It has the formal solution 
\[ \tnb{v} = (\vek{G}_k \otimes I_{\C{R}})^{-1} \tnb{r} = 
    (\vek{G}_k^{-1} \otimes I_{\C{R}}) \tnb{r} \in \C{Z}_k \otimes \C{R} . \]
\end{thm}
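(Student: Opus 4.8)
The plan is to obtain the Galerkin system \refeq{eq:psi-Galerkin} directly from the orthogonality characterisation of the projection that is already in hand. By \refT{cond-expect-orthog-n}, applied with the linearly closed subspace $\E{R}_k = \C{R}\otimes\C{S}_k$ from \refeq{eq:def-Rn}, the element $P_{\E{R}_k}(R(x)) = \Phi_{R,k}(\hat{h}(x,\vepsilon v))$ is the unique $\tilde{R}\in\E{R}_k$ satisfying the orthogonality condition \refeq{eq:iIV-n}. Since $\C{S}_k$ is finite-dimensional with $\C{S}_k = \spn\{\psi_\alpha(\hat{h}(x,\vepsilon v)) : \alpha\in\C{A}_k\}$, every element of $\E{R}_k$ is a finite sum $\sum_{\alpha\in\C{A}_k} w_\alpha\otimes\psi_\alpha(\hat{h}(x,\vepsilon v))$ with $w_\alpha\in\C{R}$; hence, by bilinearity of $\ipd{\cdot}{\cdot}_{\E{R}}$, the condition \refeq{eq:iIV-n} is equivalent to testing only against the elementary tensors $w\otimes\psi_\beta(\hat{h}(x,\vepsilon v))$ for all $w\in\C{R}$ and all $\beta\in\C{A}_k$. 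So first I would insert the ansatz \refeq{eq:ansatz-psi}, $\Phi_{R,k}(y) = \sum_{\alpha\in\C{A}_k} v_\alpha\,\psi_\alpha(y)$, into \refeq{eq:iIV-n} for these test elements.

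Next comes a routine bilinear computation using the defining identity of the tensor inner product on elementary tensors together with $\ip{\cdot}{\cdot}_{\C{S}} = \EXP{\,\cdot\,\cdot\,}$. Writing $y = \hat{h}(x,\vepsilon v)$, for the ``$\Phi$'' part one gets $\ipd{v_\alpha\,\psi_\alpha(y)}{w\,\psi_\beta(y)}_{\E{R}} = \ip{v_\alpha}{w}_{\C{R}}\,\ip{\psi_\alpha(y)}{\psi_\beta(y)}_{\C{S}} = (\vek{G}_k)_{\beta\alpha}\,\ip{v_\alpha}{w}_{\C{R}}$, and for the data part $\ipd{R(x)}{w\,\psi_\beta(y)}_{\E{R}} = \EXP{\psi_\beta(y)\,\ip{R(x)}{w}_{\C{R}}} = \ip{\EXP{\psi_\beta(y)\,R(x)}}{w}_{\C{R}}$, all expectations being finite because $\psi_\alpha(y)\in\C{S}$ and $R(x)\in\E{R}$. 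Summing over $\alpha$ and using that $w\in\C{R}$ is arbitrary yields, for each $\beta\in\C{A}_k$, the $\C{R}$-valued equation
\[ \sum_{\alpha\in\C{A}_k} (\vek{G}_k)_{\beta\alpha}\, v_\alpha = \EXP{\psi_\beta(y)\,R(x)} . \]
With $\tnb{v} = \sum_\alpha \vek{e}_\alpha\otimes v_\alpha$ and $\tnb{r} = \sum_\beta \vek{e}_\beta\otimes\EXP{\psi_\beta(y)\,R(x)}$ this is exactly the componentwise form of $(\vek{G}_k\otimes I_{\C{R}})\tnb{v} = \tnb{r}$ in $\C{Z}_k\otimes\C{R}$. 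Conversely, any $\tnb{v}$ solving this system produces via \refeq{eq:ansatz-psi} an element of $\E{R}_k$ satisfying \refeq{eq:iIV-n}, hence equal to $P_{\E{R}_k}(R(x))$; so the Galerkin equation and the projection problem are equivalent, and existence is already furnished by \refT{cond-expect-orthog-n}.

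For uniqueness I would invoke that $\{\psi_\alpha(y)\}_{\alpha\in\C{A}_k}$ is linearly independent in $\C{S}$ (being a finite subfamily of the Hilbert basis $\C{B}$ of $\C{S}_\infty$ from \refeq{eq:basis-L0}), so that its Gram matrix $\vek{G}_k$ is symmetric \emph{positive definite}, hence invertible on the finite-dimensional space $\C{Z}_k = \D{R}^{\C{A}_k}$. Then $\vek{G}_k\otimes I_{\C{R}}$ is boundedly invertible with inverse $\vek{G}_k^{-1}\otimes I_{\C{R}}$, by the elementary tensor-product identity $(\vek{G}_k\otimes I_{\C{R}})(\vek{G}_k^{-1}\otimes I_{\C{R}}) = (\vek{G}_k \vek{G}_k^{-1})\otimes(I_{\C{R}} I_{\C{R}}) = I_{\C{Z}_k\otimes\C{R}}$, which gives the stated formal solution $\tnb{v} = (\vek{G}_k\otimes I_{\C{R}})^{-1}\tnb{r} = (\vek{G}_k^{-1}\otimes I_{\C{R}})\tnb{r}$. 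Incidentally this sharpens \refT{cond-expect-orthog-n}: there the optimal \emph{map} need not be unique, but once the basis $\C{B}$ is fixed its \emph{coefficients} $v_\alpha$ are, precisely because $\vek{G}_k$ is definite.

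The only delicate point — and the nearest thing to an obstacle — is the reduction from ``$\vphi$ ranges over $L_{0,k}(\C{Y},\C{R})$'' in \refeq{eq:iIV-DDl-nR} to testing merely against the $w\,\psi_\beta(y)$: one must make explicit that the test space which actually matters here is $\C{R}\otimes\C{S}_k$, i.e.\ the span of the chosen basis tensored with $\C{R}$, as built in \refeq{eq:def-Rn}--\refeq{eq:def-Sn}, so that no generality is lost in restricting to finitely many coordinate directions. Once that identification is made, the rest is the bilinear-form bookkeeping indicated above.
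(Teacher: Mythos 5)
Your proposal is correct and follows essentially the same route as the paper: the Galerkin system \refeq{eq:psi-Galerkin} is obtained as a direct consequence of the orthogonality condition \refeq{eq:iIV-DDl-n} (equivalently \refeq{eq:iIV-n}) of \refT{cond-expect-orthog-n} applied to $\E{R}_k=\C{R}\otimes\C{S}_k$, and unique solvability follows because $\vek{G}_k$ and $I_{\C{R}}$ are positive definite, so $\vek{G}_k\otimes I_{\C{R}}$ is invertible with inverse $\vek{G}_k^{-1}\otimes I_{\C{R}}$. You merely spell out the bilinear bookkeeping and the reduction to elementary-tensor test functions that the paper leaves implicit as ``a simple consequence''.
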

\begin{proof}
  The Galerkin \refeq{eq:psi-Galerkin} is a simple consequence of
  \refeq{eq:iIV-DDl-n} from \refT{cond-expect-orthog-n}.  As the Gram
  matrix $\vek{G}_k$ and the identity $I_{\C{R}}$ on $\C{R}$ are
  positive definite, so is the tensor operator $(\vek{G}_k \otimes
  I_{\C{R}})$, with inverse $(\vek{G}_k^{-1} \otimes I_{\C{R}})$.
\end{proof}
As in \refeq{eq:symbolic}, the block structure of the equations is
clearly visible.  Hence, to solve \refeq{eq:psi-Galerkin}, one only has
to deal with the `small' matrix $\vek{G}_n$.

The update corresponding to \refeq{eq:orth-upd} --- again actually
\refeq{eq:orth-upd-g} as we are approximating the map $\Phi_R$ now by
a new map $g_R = \Phi_{R,k}$ --- then becomes
\begin{equation} \label{eq:GBU-ng}
  R_a \approx R_{a,k} = R(x_f) + \left( \Phi_{R,k}(\hat{y}) - 
  \Phi_{R,k}(y(x_f)) \right)  = R_f +  R_{\infty,k}.
\end{equation}
This may be called a `general Bayesian update'.  Applying
\refeq{eq:GBU-ng} now again to the special case $R(x):=x$, one obtains
a possibly nonlinear filter based on the basis $\C{B}$:
\begin{equation} \label{eq:GBU-xg}
  x_a \approx x_{a,k} =  x_f + \left( \Phi_{x,k}(\hat{y}) - 
  \Phi_{x,k}(y(x_f)) \right)  = x_f +  x_{\infty,k}.
\end{equation}
In case the $\C{Y}^* \subseteq \spn \{ \psi_\alpha\}_{\alpha \in
  \C{A}_k}$, i.e.\ the basis generates all the linear functions on
$\C{Y}$, this is a true extension of the Kalman filter.

%
%
%
%
%
%
%
%
%
%
%
%


%

\section{Numerical realisation} \label{S:num-real}
In the instances where we want to employ the theory detailed in the
previous \refS{bayes} and \refS{char-rv}, the spaces $\C{U}$ and
$\C{Q}$ and hence $\C{X}$ are usually infinite dimensional, as is the
space $\C{S} = L_2(\Omega)$.  For an actual computation they all have
to be discretised or approximated by finite dimensional subspaces.

In our examples we will chose finite element discretisations for
$\C{U}$, $\C{Q}$, and hence $\C{X}$, and corresponding subspaces.
Hence let $\C{X}_M := \text{span }\{\vrho_m\ : m=1,\dots,M\} \subset
\C{X}$ be an $M$-dimensional subspace with basis
$\{\vrho_m\}_{m=1}^M$.  An element of $\C{X}_M$ will be represented by
the vector $\vek{x}=[x^1, \dots, x^M]^T \in \D{R}^M$ such that
$\sum^M_{m=1} x^m \vrho_m \in \C{X}_M$.  To avoid a profusion of
notations, the corresponding random vector in $\D{R}^M \otimes \C{S}$
--- a mapping $\Omega \to  \D{R}^M \cong \C{X}_M$ --- will also be
denoted by $\vek{x}$, as the meaning will be clear from the context.

The norm $\nd{\vek{x}}_{M}$ one has to take on
$\D{R}^M$ results from the inner product
$\bkt{\vek{x}_1}{\vek{x}_2}_{M} := \vek{x}_1^T \vek{Q}\vek{x}_2$ with
$\vek{Q} = \left(\bkt{\vrho_m}{\vrho_n}_{\C{X}}\right)$, the Gram
matrix of the basis.  We will later choose an orthonormal basis, so
that $\vek{Q} = \vek{I}$ is the identity matrix.  Similarly, on
$\E{X}_M = \D{R}^M \otimes \C{S}$ the inner product is
$\bkd{\vek{x}_1}{\vek{x}_2}_{\E{X}_M} := \EXP{\bkt{\vek{x}_1}{\vek{x}_2}_{M}}$.

The space of possible measurements $\C{Y}$ can usually be taken to be
finite dimensional, otherwise we take similarly as before a
$R$-dimensional subspace $\C{Y}_R$, whose elements are similarly
represented by a vector of coefficients $\vek{y} \in \D{R}^R$.  For
the discretised version of the RV $y(x_f) = y(\hat{h}(x_f, \vepsilon
v))$ we will often use the shorthand $\vek{y}_f := \vek{y}(\vek{x}_f)
= \vek{y}(\hat{h}(\vek{x}_f, \vepsilon \vek{v}))$.

As some of the most efficient ways of doing the update are linear
filters based on the general idea of orthogonal decomposition ---
\refeq{eq:orth-upd} in \refSS{char-p_rv-filt} --- applied to the mean
--- \refeq{eq:orth-upd-x} in \refSSS{filt-mean} --- but in the
modified form \refeq{eq:orth-upd-g} where $g$ is a linear map, and
especially the optimal linear map of the Gauss-Markov-Kalman (GMK)
filter \refeq{eq:iVII}, we start from Proposition~\ref{prop:n-1-vers}
in \refSSS{GMK-filter}.  For other approximations the finite
dimensional discretisation would be largely analogous.

On $\D{R}^M$, representing $\C{X}_M$, the Kalman gain operator in
Proposition~\ref{prop:n-1-vers} in \refeq{eq:iVII} becomes a matrix
$\vek{K}\in \D{R}^{M \times R}$.  Then the update corresponding to
\refeq{eq:iVII} is
\begin{equation} \label{eq:iIX} 
  \vek{x}_a = \vek{x}_f + \vek{K}(\vhat{y} - \vek{y}(\vek{x}_f)),
  \text{ with } \vek{K} =  \vek{C}_{xy}\, \vek{C}_{yy}^{-1}.
\end{equation}
Here the covariances are $\vek{C}_{xy} := \EXP{\tilde{\vek{x}}_f\;
  \vtil{y}(\vek{x}_f)}$, and similarly for $\vek{C}_{yy}$.  Often the
measurement error $v$ in the measurement model $\tilde{h}(x_f,
\vepsilon v) = h(x_f) + \vepsilon S_y(x_f) v$ is independent of
$\vek{x}$ --- actually \emph{uncorrelated} would be sufficient, i.e.\
$\vek{C}_{x v}=\vek{0}$ --- hence, assuming that $S_y$ does not depend
on $x$, $\vek{C}_{xx} = \vek{C}_{hh} + \vepsilon² \vek{S}_y
\vek{C}_{vv} \vek{S}_y^T$ and $\vek{C}_{xy} = \vek{C}_{xh}$, where $h
= h(x_f)$.


It is important to emphasise that the theory presented in the forgoing
\refS{bayes} and \refS{char-rv} is independent of any discretisation
of the underlying spaces.  But one usually can still not numerically
compute with objects like $\vek{x}\in \E{X}_M = \D{R}^M \otimes
\C{S}$, as $\C{S} = L_2(\Omega)$ is normally an infinite dimensional
space, and has to be discretised.  One well-known possibility are
samples, i.e.\ the RV $\vek{x}(\omega)$ is represented by its value at
certain points $\omega_z$, and the points usually come from some
quadrature rule.  The well-known Monte Carlo (MC) method uses random
samples, the quasi-Monte Carlo (QMC) method uses low discrepancy
samples, and other rules like sparse grids (Smolyak rule) are
possible.  Using MC samples in the context of the linear update
\refeq{eq:iVII} is known as the \emph{Ensemble Kalman Filter} (EnKF),
see \citep{BvrAkJsOpHgm11} for a general overview in this context, and
\citep{Evensen2009}, \citep{Evensen2009a} for a thorough description
and analysis.  This method is conceptually fairly simple and is
currently a favourite for problems where the computation of the
predicted measurement $\vek{y}(\vek{x}_f(\omega_z))$ is difficult or
expensive.  It needs far fewer samples for meaningful results than
MCMC, but on the other hand it uses the linear approximation inherent
in \refeq{eq:iIX}.

Here we want to use so-called \emph{functional} or \emph{spectral}
approximations, so similarly as for $\C{X}_M$, we pick a finite set of
linearly independent vectors in $\C{S}$.  As $\C{S} = L_2(\Omega)$,
these abstract vectors are in fact RVs with finite variance.  Here we
will use the best known example, namely \emph{Wiener}'s
\emph{polynomial chaos} expansion (PCE) as basis \citep{Wiener1938},
\citep{ghanemSpanos91}, \citep{holdenEtAl96}, \citep{Janson1997},
\citep{malliavin97}, \citep{matthies6}, this allows us to use
\refeq{eq:iIX} without sampling, see \citep{BvrAkJsOpHgm11},
\citep{opBvrAlHgm12}, \citep{bvrAlOpHgm12-a}, \citep{boulder:2011},
\citep{OpBrHgm12}, and also \citep{saadGhn:2009},
\citep{Blanchard2010a}.

The PCE is an expansion in multivariate \emph{Hermite polynomials}
\citep{ghanemSpanos91}, \citep{holdenEtAl96}, \citep{Janson1997},
\citep{malliavin97}, \citep{matthies6}; we denote by
$H_{\vek{\alpha}}(\vek{\theta}) = \prod_{k \in \D{N}}
h_{\alpha_k}(\theta_k) \in \C{S}$ the multivariate polynomial in
standard and independent Gaussian RVs $\vek{\theta}(\omega) =
(\theta_1(\omega), \dots, \theta_k(\omega), \dots)_{k\in \D{N}}$, where
$h_j$ is the usual uni-variate Hermite polynomial, and $\vek{\alpha} =
(\alpha_1, \dots, \alpha_k, \dots)_{k\in \D{N}}\in
\C{N}:=\D{N}_0^{(\D{N})}$ is a multi-index of generally infinite
length but with only finitely many entries non-zero.  As $h_0 \equiv
1$, the infinite product is effectively finite and always
well-defined.

The \emph{Cameron-Martin} theorem assures us \citep{holdenEtAl96},
\citep{malliavin97}, \citep{Janson1997} that the set of these
polynomials is dense in $\C{S} = L_2(\Omega)$, and in fact
$\{H_{\vek{\alpha}}/\sqrt{(\vek{\alpha} !)} \}_{\vek{\alpha} \in
  \C{N}}$ is a complete orthonormal system (CONS), where $\vek{\alpha}
! := \prod_{k \in \D{N}} (\alpha_k !)$ is the product of the
individual factorials, also well-defined as except for finitely many
$k$ one has $\alpha_k ! = 0! = 1$.  So one may write $\vek{x}(\omega) =
\sum_{\vek{\alpha}\in \C{N}} \vek{x}^{\vek{\alpha}} H_{\vek{\alpha}}
(\vek{\theta}(\omega))$ with $\vek{x}^{\vek{\alpha}} \in \D{R}^M$, and
similarly for $\vek{y}$ and all other RVs.  In this way the RVs are
expressed as functions of other, known RVs $\vek{\theta}$---hence the
name \emph{functional} approximation---and not through samples.

The space $\C{S}$ may now be discretised by taking a finite subset $\C{J}
\subset \C{N}$ of size $J = \ns{\C{J}}$, and setting $\C{S}_J = \spn
\{H_{\vek{\alpha}}\,:\, \vek{\alpha} \in \C{J} \} \subset \C{S}$.  The
orthogonal projection $P_J$ onto $\C{S}_J$ is then simply
\begin{equation}  \label{eq:proj-J}
P_J: \C{X}_M \otimes \C{S} \ni
\sum_{\vek{\alpha}\in \C{N}} \vek{x}^{\vek{\alpha}} H_{\vek{\alpha}} \mapsto
\sum_{\vek{\alpha}\in \C{J}} \vek{x}^{\vek{\alpha}} H_{\vek{\alpha}} 
\in \C{X}_M \otimes \C{S}_J.
\end{equation}
Taking \refeq{eq:iIX}, one may rewrite it as
\begin{eqnarray}  \label{eq:proj-lin-f1}
  \vek{x}_a &=& \vek{x}_f + \vek{K}(\vhat{y} - \vek{y}_f) =\\
  \sum_{\vek{\alpha}\in \C{N}} \vek{x}_a^{\vek{\alpha}} H_{\vek{\alpha}}(\vek{\theta}) &=& 
  \sum_{\vek{\alpha}\in \C{N}} \left(\vek{x}_f^{\vek{\alpha}} + \vek{K}\left( 
  \vhat{y}^{\vek{\alpha}}-\vek{y}^{\vek{\alpha}}_f\right)\right)
  H_{\vek{\alpha}}(\vek{\theta}). \label{eq:proj-lin-f2}
\end{eqnarray}
Observe, that as the measurement or observation $\vhat{y}$ is a
constant, one has in \refeq{eq:proj-lin-f2} that only $\vhat{y}^{0} =
\vhat{y}$, all other coefficients $\vhat{y}^{\vek{\alpha}} = \vek{0}$
for $\vek{\alpha} \neq \vek{0}$.

Projecting both sides of \refeq{eq:proj-lin-f2} onto $\C{X}_M \otimes
\C{S}_J$ is very simple and results in
\begin{equation} \label{eq:proj-lin-J}
  \sum_{\vek{\alpha}\in \C{J}} \vek{q}_a^{\vek{\alpha}} H_{\vek{\alpha}} = 
  \sum_{\vek{\alpha}\in \C{J}} \left(\vek{q}_f^{\vek{\alpha}} + \vek{K}\left( 
  \vek{z}^{\vek{\alpha}}-\vek{y}^{\vek{\alpha}}_f\right)\right)H_{\vek{\alpha}}.
\end{equation}
Obviously the projection $P_J$ commutes with the Kalman operator $K$ and
hence with its finite dimensional analogue $\vek{K}$.  One may actually
concisely write \refeq{eq:proj-lin-J} as
\begin{equation} \label{eq:proj-comm-K}
  P_J \vek{x}_a = P_J \vek{x}_f + P_J \vek{K}(\vhat{y} - \vek{y}_f) =
  P_J\vek{x}_f + \vek{K}(P_J\vhat{y} - P_J\vek{y}_f).
\end{equation}

Elements of the discretised space $\E{X}_{M,J} = \C{X}_M \otimes
\C{S}_J \subset \E{X}$ thus may be written fully expanded as 
$\sum_{m=1}^M \sum_{\vek{\alpha}\in \C{J}} x^{\vek{\alpha},m} \vrho_m 
H_{\vek{\alpha}}$.  The tensor representation is
$\tnb{x} := \sum_{\vek{\alpha}\in \C{J}} 
\vek{x}^{\vek{\alpha}} \otimes \vek{e}^{\vek{\alpha}}$, where the 
$\{ \vek{e}^{\vek{\alpha}} \}$ are the canonical basis in $\D{R}^J$,
and may be used to express \refeq{eq:proj-lin-J} or
\refeq{eq:proj-comm-K} succinctly as
\begin{equation}  \label{eq:proj-t}
 \tnb{x}_a = \tnb{x}_f+ \tnb{K}(\That{y}-\tnb{y}_f),
\end{equation}
again an equation between the tensor representations of some RVs,
where $\tnb{K} = \vek{K} \otimes \vek{I}$, with $\vek{K}$ from
\refeq{eq:iIX}.  Hence the update equation is naturally in a
tensorised form.  This is how the update can finally be computed in
the PCE representation without any sampling \citep{BvrAkJsOpHgm11},
\citep{opBvrAlHgm12}, \citep{bvrAlOpHgm12-a}, \citep{boulder:2011}.
Analogous statements hold for the forms of the update
\refeq{eq:n-deg-pol} with higher order terms $n>1$, and do not have to
be repeated here.  Let us remark that these updates go very seamlessly
with very efficient methods for sparse or low-rank approximation of
tensors, c.f.\ the monograph \citep{Hackbusch_tensor} and the
literature therein.  These methods are PCE-forms of the Bayesian
update, and in particular the \refeq{eq:proj-t}, because of its formal
affinity to the Kalman filter (KF), may be called the polynomial chaos
expansion based Kalman filter (PCEKF).

It remains to say how to compute the terms $\Hf{k}{H}$ in the update equation 
\refeq{eq:n-deg-pol}---or rather the terms in the defining \refeq{eq:cond-H}
in \refT{n-vers}---in this approach.  Given the PCEs of the RVs, this is actually
quite simple as any moment can be computed directly from the PCE
\citep{matthies6}, \citep{opBvrAlHgm12}, \citep{bvrAlOpHgm12-a}. 
A typical term $\ipj{y^{\vee k}} = \ipj{\text{Sym}(y^{\otimes k})}=
\text{Sym}(\ipj{y^{\otimes k}})$
in the operator matrix \refeq{eq:cond-H}, where $\vek{y}=\sum_\alpha\vek{y}^{\vek{\alpha}}
H_{\vek{\alpha}}(\vek{\theta})$, may be computed through
\begin{multline}  \label{eq:typ-zk}
  \ipj{\vek{y}^{\otimes k}} = \EXP{\bigotimes_{i=1}^k \sum_{\vek{\alpha}_i} \left( 
  \vek{y}^{\vek{\alpha}_i} H_{\vek{\alpha}_i}\right)} = \\
   \EXP{\sum_{\vek{\alpha}_1, \dots, \vek{\alpha}_k} \bigotimes_{i=1}^k 
   \vek{y}^{\vek{\alpha}_i} \prod_{i=1}^k H_{\vek{\alpha}_i}}  = 
   \sum_{\vek{\alpha}_1, \dots, \vek{\alpha}_k} \bigotimes_{i=1}^k \vek{y}^{\vek{\alpha}_i}
   \;\EXP{\prod_{i=1}^k H_{\vek{\alpha}_i}}
\end{multline}
As here the $H_{\vek{\alpha}}$ are \emph{polynomials}, the last
expectation in \refeq{eq:typ-zk} is finally over products of powers of
pairwise independent normalised Gaussian variables, which actually may
be done analytically \citep{holdenEtAl96}, \citep{malliavin97},
\citep{Janson1997}.  But some simplifications come from remembering
that $\vek{y}^0=\EXP{\vek{y}} = \bar{\vek{y}}$, $H_{\vek{0}}\equiv 1$,
the orthogonality relation $\bkt{H_{\vek{\alpha}}}{H_{\vek{\beta}}} =
\delta_{\vek{\alpha},\vek{\beta}}\, \vek{\alpha}!$, and that the
Hermite polynomials are an \emph{algebra}.  Hence
$H_{\vek{\alpha}}H_{\vek{\beta}} = \sum_{\vek{\gamma}}
c^{\vek{\gamma}}_{\vek{\alpha},\vek{\beta}}H_{\vek{\gamma}}$, where
the \emph{structure} coefficients
$c^{\vek{\gamma}}_{\vek{\alpha},\vek{\beta}}$ are known analytically
\citep{malliavin97}, \citep{matthies6}, \citep{opBvrAlHgm12},
\citep{bvrAlOpHgm12-a}.

Similarly, for a RV $R=R(x)$, for a typical right-hand-side term
$\ipj{R(x)\otimes y^{\vee k}} = \ipj{R\otimes \text{Sym}(y^{\otimes
    k})}$ in \refeq{eq:cond-H} with $\vek{R}=\sum_\beta
\vek{R}^{\vek{\beta}} H_{\vek{\beta}}(\vek{\theta})$ one has
\begin{equation}  \label{eq:typ-qzk}
   \ipj{R\otimes \text{Sym}(y^{\otimes k})} =
     \sum_{\vek{\beta}, \vek{\alpha}_1, \dots, \vek{\alpha}_k} \vek{R} \otimes
     \text{Sym}\left(\bigotimes_{i=1}^k \vek{y}^{\vek{\alpha}_i}\right)
   \;\EXP{H_{\vek{\beta}} \, \prod_{i=1}^k H_{\vek{\alpha}_i}}.
\end{equation}
As these relations may seem a bit involved --- they are actually just an
intricate combination of \emph{known} terms --- we show here how simple
they become for the case of the covariance needed in the linear update
formula \refeq{eq:iVII} or rather \refeq{eq:iIX}:
\begin{eqnarray}  \label{eq:cov-PCE-1}
   \vek{C}_{yy} &=
   \sum_{\vek{\alpha}\in \C{N}, \vek{\alpha} \ne 0} (\vek{\alpha} !)\;
    \vek{y}^{\vek{\alpha}}\otimes \vek{y}^{\vek{\alpha}} 
   &\approx 
   \sum_{\vek{\alpha}\in \C{J}, \vek{\alpha} \ne 0} (\vek{\alpha} !)\;
   \vek{y}^{\vek{\alpha}}\otimes \vek{y}^{\vek{\alpha}},\\
  \vek{C}_{xy} &= 
   \sum_{\vek{\alpha}\in \C{N}, \vek{\alpha} \ne 0} (\vek{\alpha} !)\;
    \vek{x}^{\vek{\alpha}}\otimes \vek{y}^{\vek{\alpha}} 
  &\approx \sum_{\vek{\alpha}\in \C{J}, \vek{\alpha} \ne 0} (\vek{\alpha} !)\;
   \vek{x}^{\vek{\alpha}}\otimes \vek{y}^{\vek{\alpha}}. 
    \label{eq:cov-PCE-2}
\end{eqnarray}

Looking for example at \refeq{eq:iIX} and our setup as explained in \refS{intro},
we see that the coefficients of $\vek{y}(\vek{x}_f)=\sum_\alpha\vek{y}_f^{\vek{\alpha}}
H_{\vek{\alpha}}$ have to be computed from those of
$\vek{x}_f=\sum_\beta\vek{x}_f^{\vek{\beta}} H_{\vek{\beta}}$.  This propagation
of uncertainty through the system is known as \emph{uncertainty quantification} (UQ),
e.g.\ \citep{matthies6} and the references therein.  For the sake of brevity,
we will not touch further on this subject, which nevertheless is the bedrock
on which the whole computational procedure is built.

We next concentrate in \refS{bayes-lin} on examples of updating with $\psi_{m}$ 
for the case $m=1$ in \refeq{eq:n-deg-pol}, whereas in \refS{bayes-non-lin}
an example for the case $m=2$ in \refeq{eq:n-deg-pol} will be shown.




%
%
%
%
%
%
%
%
%
%


%

\section{The linear Bayesian update} \label{S:bayes-lin}
All the examples in this \refS{bayes-lin} have been computed with the case
$m=1$ of up to linear terms in \refeq{eq:n-deg-pol}, i.e.\ this is the
LBU with PCEKF.
\begin{figure}[!ht]
\centering
 \includegraphics[width=0.8\textwidth,height=0.35\textheight]{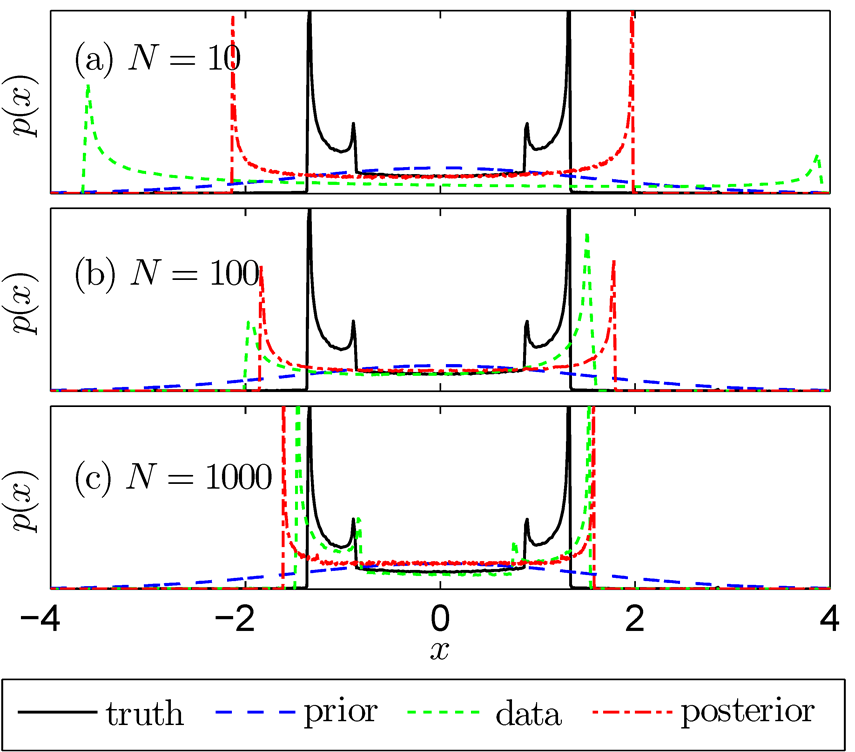}
 \caption{pdfs for linear Bayesian update (LBU), from \citep{opBvrAlHgm12}}
\label{F:exp-B-1}
\end{figure}
As the traditional Kalman filter is highly geared towards Gaussian
distributions \citep{Kalman}, and also its Monte Carlo variant EnKF
which was mentioned in \refS{num-real} tilts towards Gaussianity,
we start with a case---already described in \citep{opBvrAlHgm12}---where
the the quantity to be identified has a strongly
non-Gaussian distribution, shown in black---the `truth'---in \refig{exp-B-1}.
The operator describing the system is the identity---we compute the quantity
directly, but there is a Gaussian measurement error.  The `truth' was
represented as a $12^{\text{th}}$ degree PCE.
We use the methods as described in \refS{num-real}, and here in particular
the \refeq{eq:iIX} and \refeq{eq:proj-t}, the PCEKF.

The update is repeated several times (here ten times) with new
measure\-ments---see \refig{exp-B-1}.  The task is here to identify the
distribution labelled as `truth' with ten updates of $N$ samples
(where $N=10, 100, 1000$ was used), and we start with a very broad
Gaussian prior (in blue).  Here we see the ability of the polynomial
based LBU, the PCEKF, to identify highly non-Gaussian distributions,
the posterior is shown in red and the pdf estimated from the samples
in green; for further details see \citep{opBvrAlHgm12}.

\begin{figure}[!ht]
\centering
 \includegraphics[width=0.9\textwidth,height=0.35\textheight]{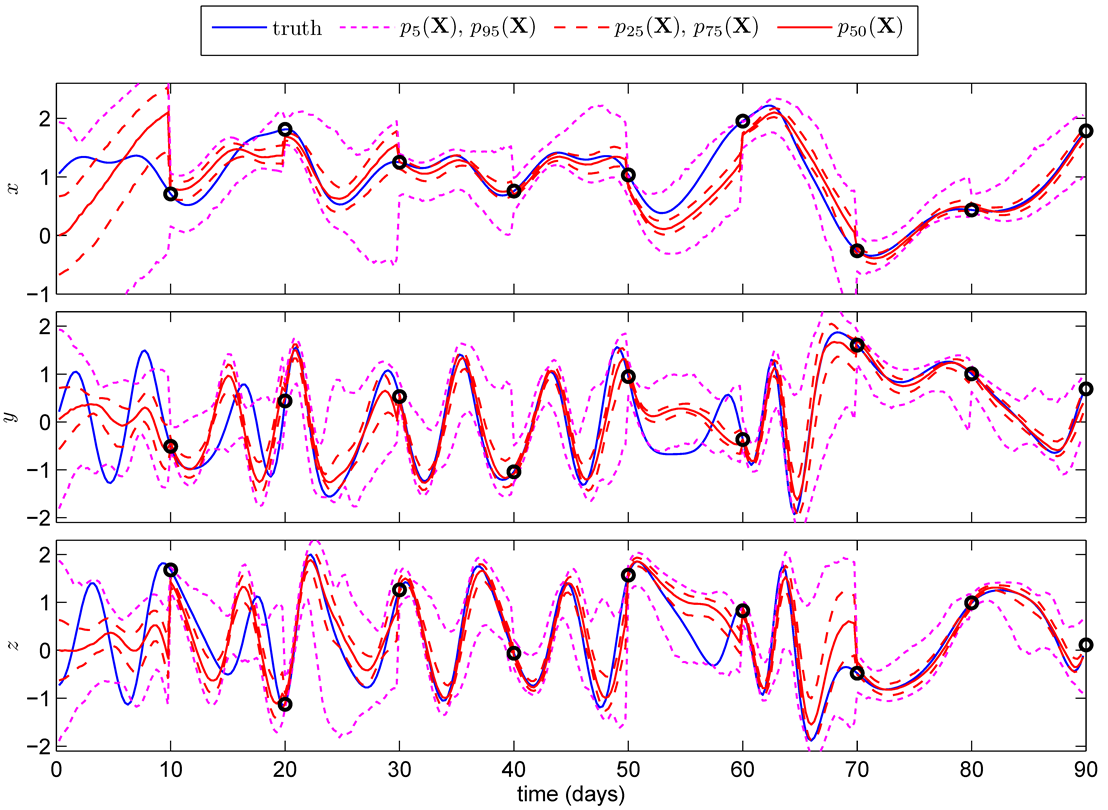}
 \caption{Time evolution of Lorenz-84 state and uncertainty with the LBU, from \citep{opBvrAlHgm12}}
\label{F:exp-B-2}
\end{figure}
The next example is also from \citep{opBvrAlHgm12}, where the system
is the well-known Lorenz-84 chaotic model, a system of three nonlinear
ordinary differential equations operating in the chaotic regime. This
is truly an example along the description of \refeq{eq:dyn-l} and
\refeq{eq:dyn-ml} in \refSS{bayes-setting}.  Remember that this was
originally a model to describe the evolution of some amplitudes of a
spherical harmonic expansion of variables describing world climate.
As the original scaling of the variables has been kept, the time axis
in \refig{exp-B-2} is in \emph{days}.  Every ten days a noisy
measurement is performed and the state description is updated.  In
between the state description evolves according to the chaotic dynamic
of the system.  One may observe from \refig{exp-B-2} how the
uncertainty---the width of the distribution as given by the quantile
lines---shrinks every time a measurement is performed, and then
increases again due to the chaotic and hence noisy dynamics.  Of
course, we did not really measure world climate, but rather simulated
the `truth' as well, i.e.\ a \emph{virtual} experiment, like the
others to follow.  More details may be found in \citep{opBvrAlHgm12}
and the references therein.

\begin{figure}[!ht]
\centering
\begin{minipage}{.4\textwidth}
  \centering
  \includegraphics[width=.99\linewidth]{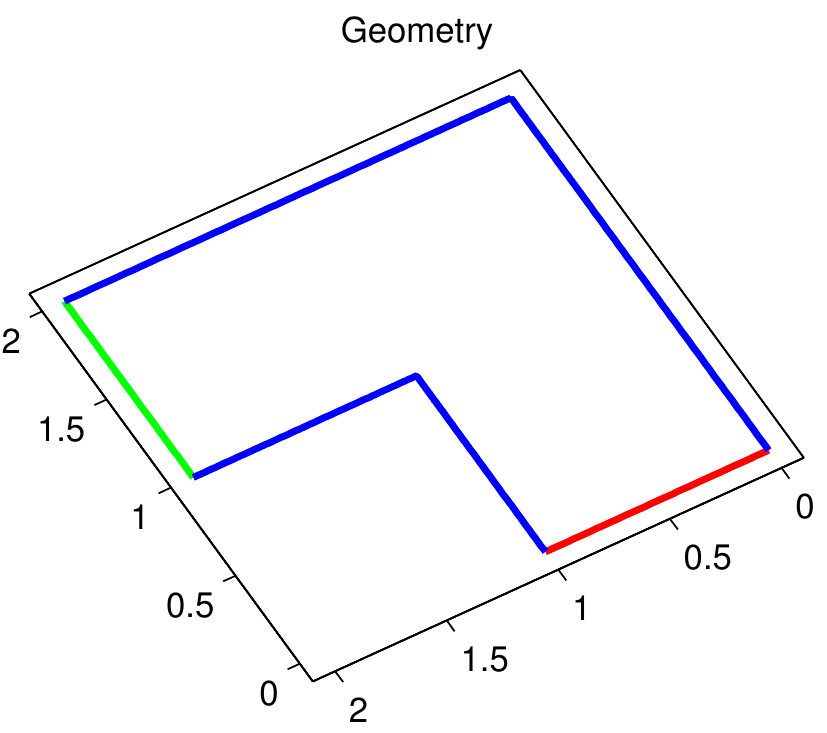}
  \caption{Diffusion domain, from \citep{bvrAlOpHgm12-a}}
  \label{F:exp-B-3}
\end{minipage}%
\begin{minipage}{.6\textwidth}
  \centering
  \includegraphics[width=.99\linewidth]{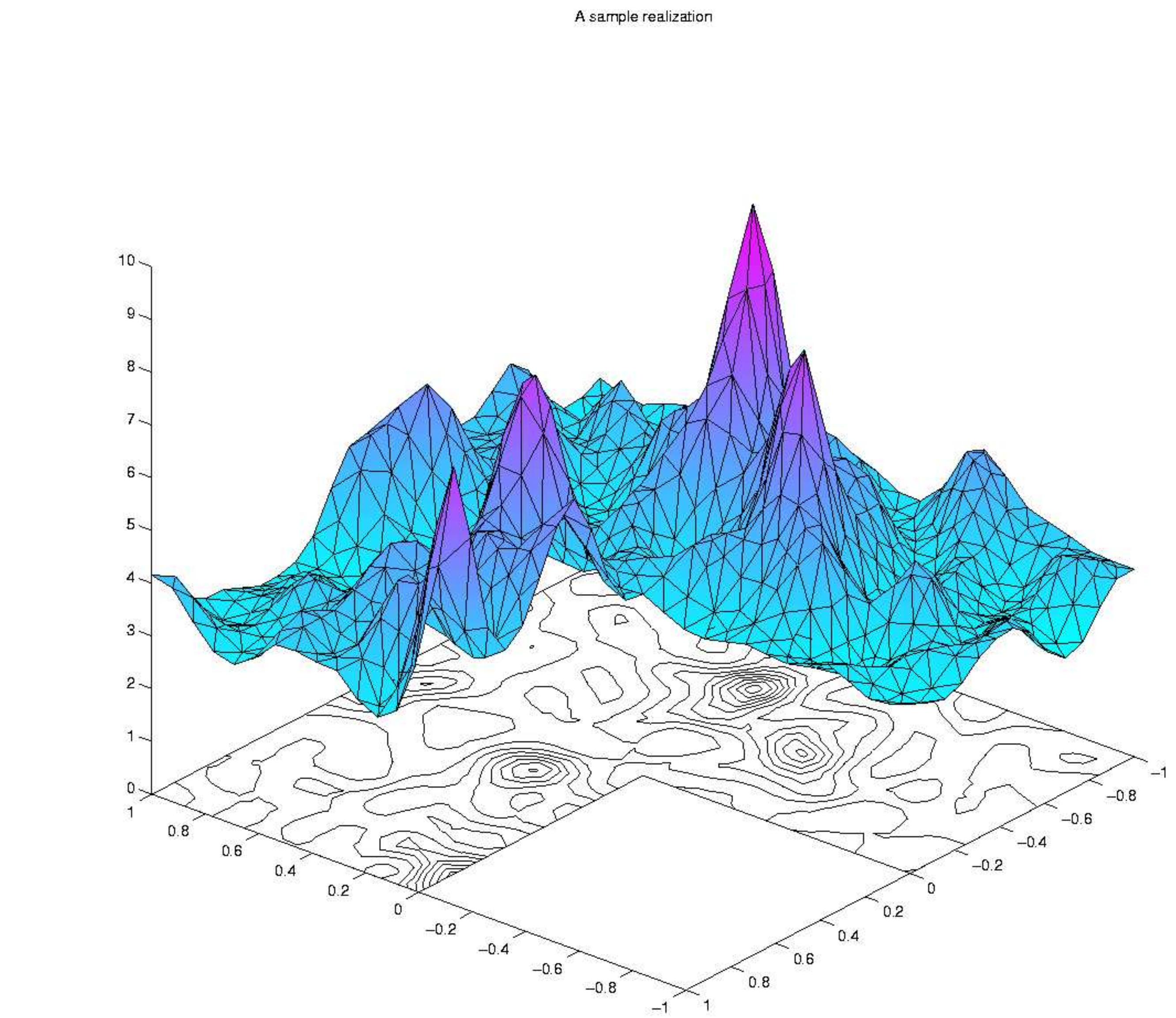}
  \caption{Conductivity field, from \citep{bvrAlOpHgm12-a}}  
  \label{F:exp-B-4}
\end{minipage}
\end{figure}
From \citep{bvrAlOpHgm12-a} we take the example shown in
\refig{exp-B-3}, a linear stationary diffusion equation on an L-shaped
plane domain as alluded to in \refS{intro}.  The diffusion coefficient
$\kappa$ in \refeq{eq:I-c} is to be identified.  As argued in \citep{BvrAkJsOpHgm11},
it is better to work with $q = \log \kappa$ as the diffusion coefficient has
to be positive, but the results are shown
in terms of $\kappa$.

One possible realisation of the diffusion coefficient
is shown in \refig{exp-B-4}.  More realistically, one should assume that
$\kappa$ is a symmetric positive definite tensor field, unless one knows that
the diffusion is \emph{isotropic}.  Also in this case one should do the updating
on the logarithm.  For the sake of simplicity we stay with the scalar case,
as there is no principal novelty in the non-isotropic case.
The virtual experiments use different right-hand-sides $f$ in \refeq{eq:I-c},
and the measurement is the observation of the solution $u$ averaged over little patches.

\begin{figure}[!ht]
\centering
\begin{minipage}{0.48\textwidth}
  \centering
  \includegraphics[width=0.99\linewidth]{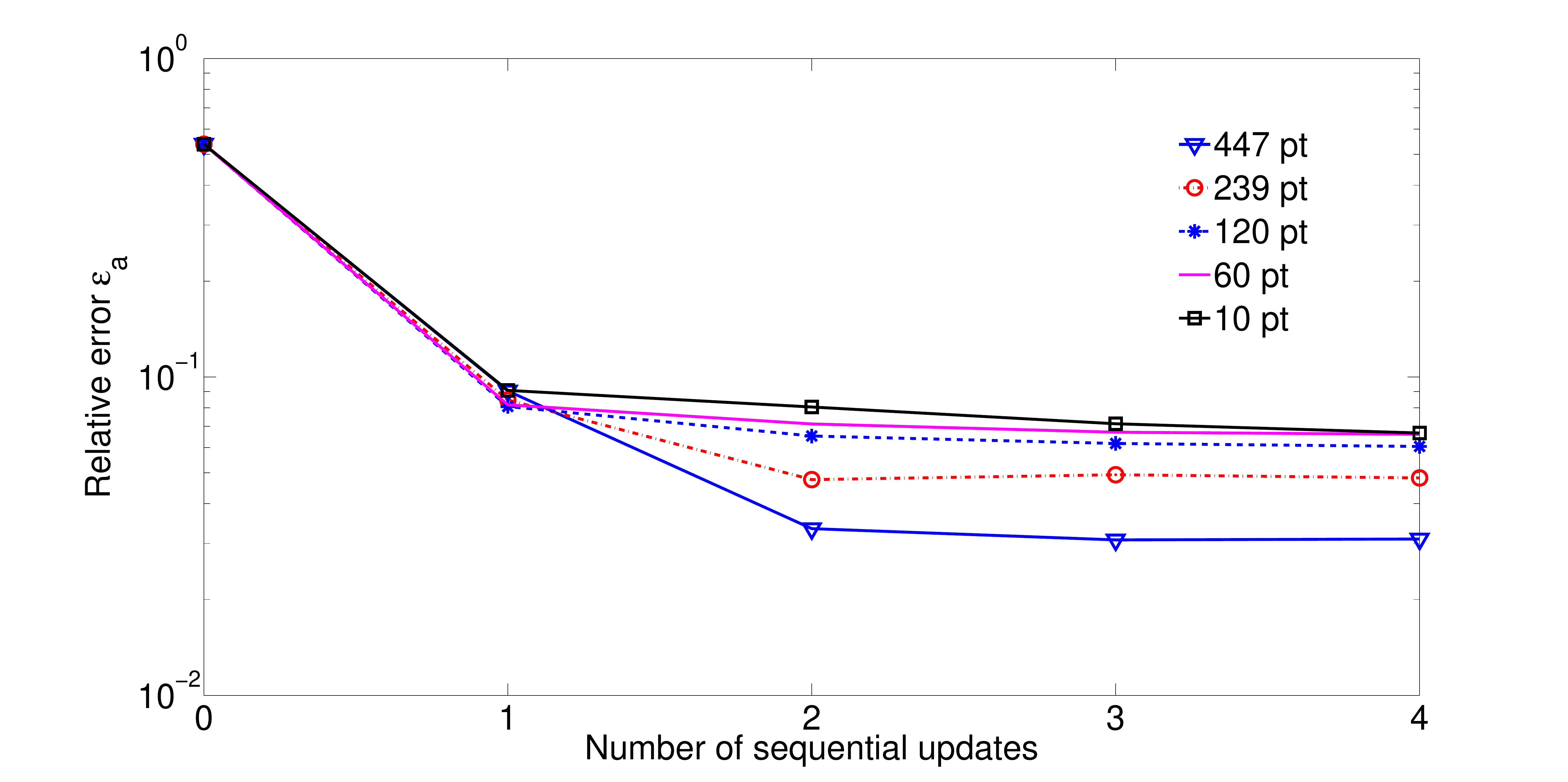}
  \caption{Convergence of identification, from \citep{bvrAlOpHgm12-a}}
  \label{F:exp-B-7}
\end{minipage}%
\hfill
\begin{minipage}{0.48\textwidth}
  \centering
  \includegraphics[width=0.99\linewidth,height=0.17\textheight]{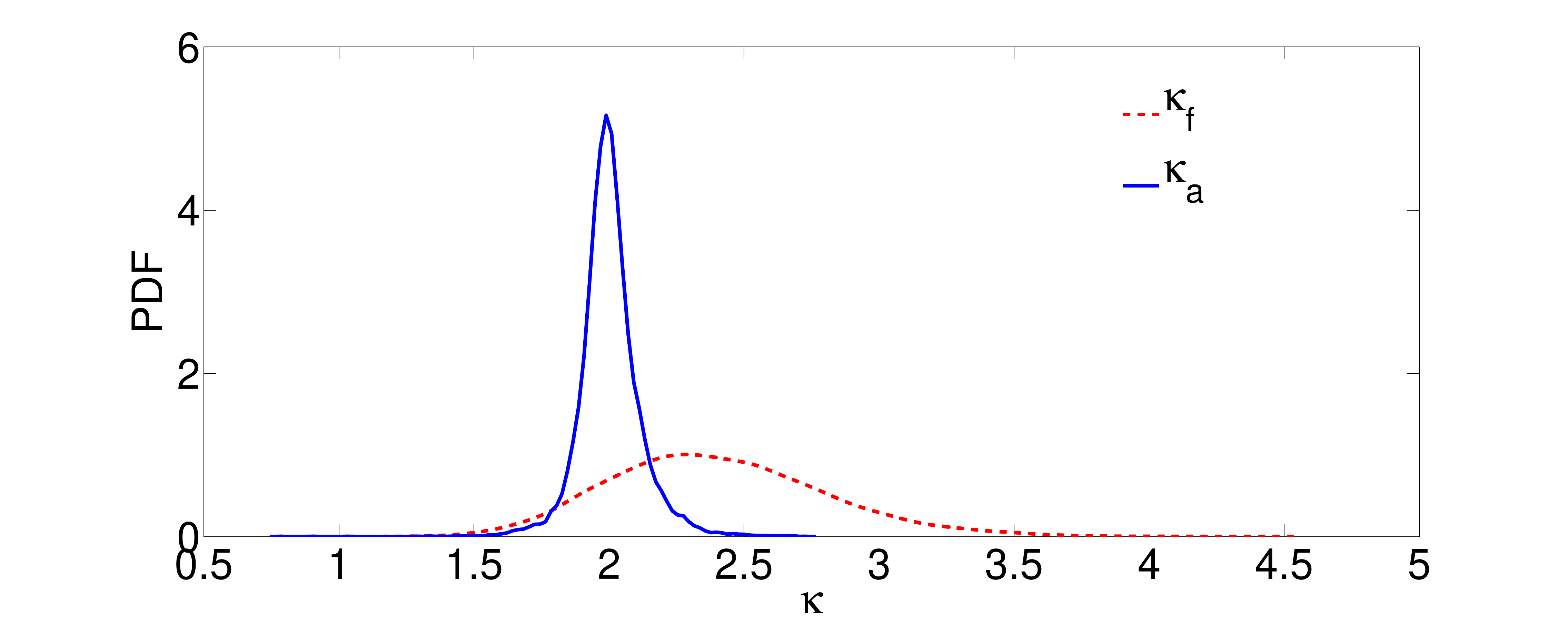}
  \caption{Prior and posterior, from \citep{bvrAlOpHgm12-a}}  
  \label{F:exp-B-8}
\end{minipage}
\end{figure}
In \refig{exp-B-7} one may observe the decrease of the error with successive
updates, but due to measurement error and insufficient information from just
a few patches, the curves level off, leaving some residual uncertainty.
The pdfs of the diffusion coefficient at some point in the domain before
and after the updating is shown in \refig{exp-B-8}, the `true' value at
that point was $\kappa=2$.
Further details can be found in \citep{bvrAlOpHgm12-a}.

\section{The nonlinear Bayesian update} \label{S:bayes-non-lin}
%
In this Section we want to show a computation with the case $m=2$ of
up to quadratic terms in $\psi_m$ in \refeq{eq:n-deg-pol}.  We go back
to the example of the chaotic Lorentz-84 \citep{opBvrAlHgm12} model
already shown in \refS{bayes-lin}, from \refeq{eq:dyn-l} and
\refeq{eq:dyn-ml} in \refSS{bayes-setting}.  This kind of experiment
has several advantages but at the same time also challenges for
identification procedures: it has only a three-dimensional state
space, these are the uncertain `parameters', i.e.\ $\vek{x} =
(x_1,x_2,x_3) = (x, y ,z) \in\C{X}=\D{R}^3$, the corresponding
operator $A$ resp.\ $f$ in the abstract \refeq{eq:I} resp.\
\refeq{eq:dyn-l} is sufficiently nonlinear to make the problem
difficult, and adding to this we operate the equation in its chaotic
regime, so that new uncertainty from the numerical computation is
added between measurements.

\begin{figure}[!ht]
\centering
 \includegraphics[width=0.9\textwidth,height=0.2\textheight]{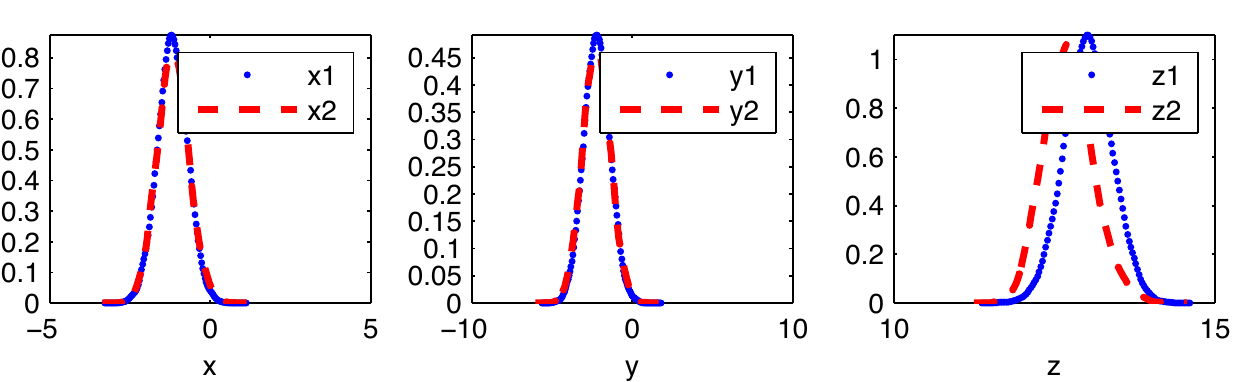}
 \caption{Linear measurement: Comparison posterior for LBU ($m=1$) and 
 QBU ($m=2$) after one update}
\label{F:exp-NB-5.5}
\end{figure}
As a first set of experiments we take the measurement operator to be
linear in $\vek{x}$, i.e.\ we can observe the \emph{whole} state
directly.  At the moment we consider updates after each day---whereas
in \refS{bayes-lin} the updates were performed every 10 days.  The
update is done once with the linear Bayesian update (LBU), and again
with a \emph{quadratic} nonlinear BU (QBU) with $m=2$.  The results
for the posterior pdfs are given in \refig{exp-NB-5.5}, where the
linear update is dotted in blue, and the full red line is the
quadratic QBU; there is hardly any difference between the two, most
probably indicating that the LBU is already very accurate.

As the differences between LBU and QBU were small --- we take this as
an indication that the LBU is not too inaccurate an approximation to
the conditional expectation --- we change the experiment and take a
nonlinear measurement function, which is now cubic: $h(\vek{x})
=(x^3,y^3,z^3)$.  We now observe larger differences between LBU and
QBU.
%
\begin{figure}[!ht]
\centering
 \includegraphics[width=0.9\textwidth,height=0.2\textheight]{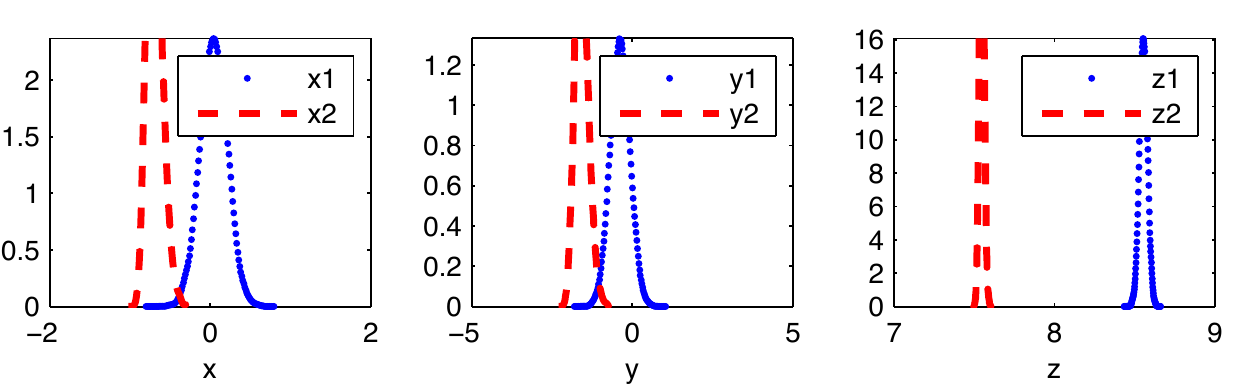}
 \caption{Cubic measurement: Comparison posterior for LBU ($m=1$) and 
 QBU ($m=2$) after one update}
\label{F:exp-NB-9}
\end{figure}

These differences in posterior pdfs after one update may be gleaned 
from \refig{exp-NB-9}, and they are indeed larger
than in the linear case \refig{exp-NB-5.5}, due to the strongly nonlinear
measurement operator, showing that the QBU may provide much more
accurate tracking of the state, especially for non-linear observation
operators.

%

\begin{figure}[!ht]
\centering
\begin{minipage}{0.47\textwidth}
  \centering
  \includegraphics[width=0.99\linewidth,height=0.26\textheight]{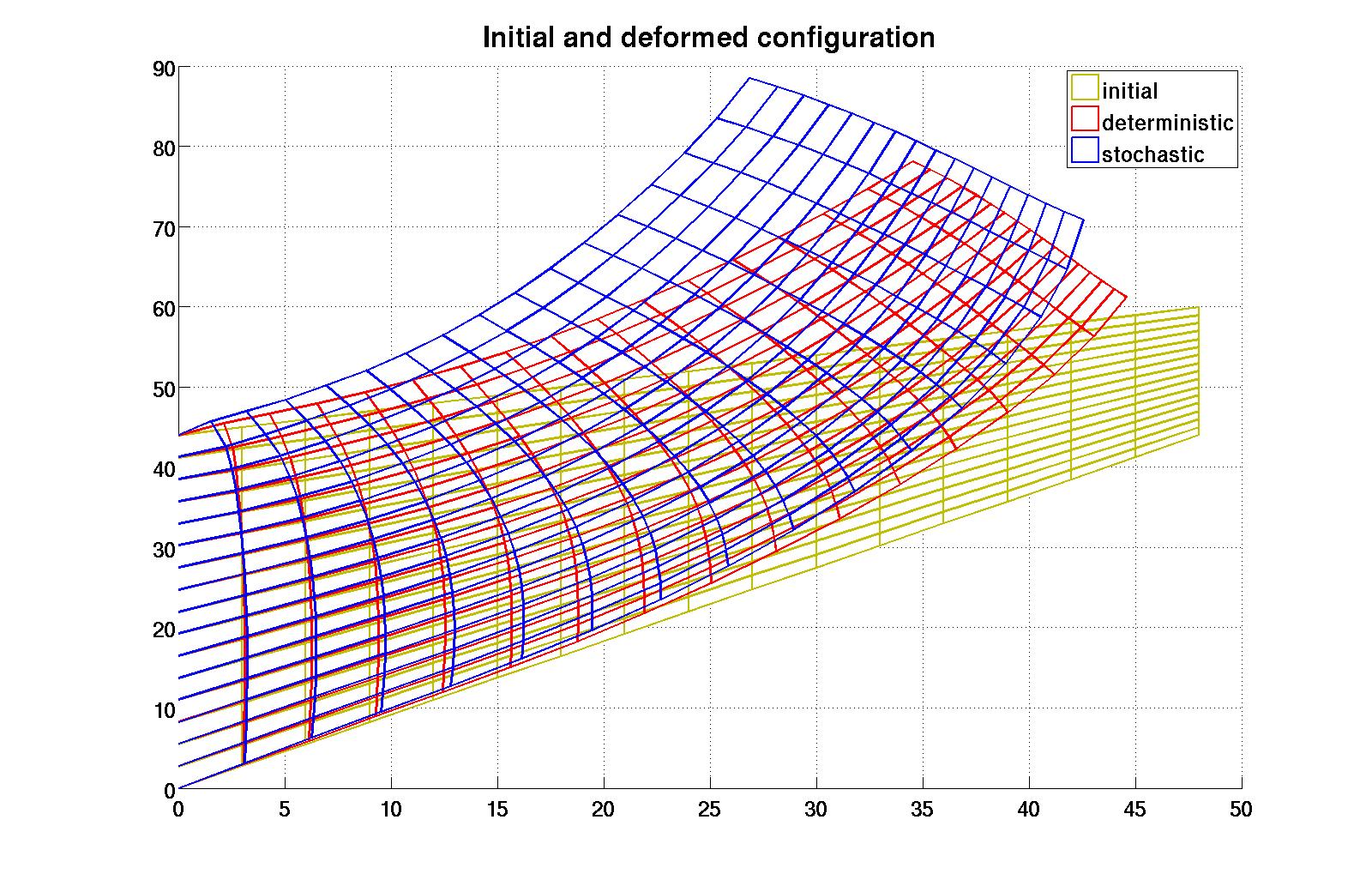}
  \caption{Deformations, from \citep{BvrAkJsOpHgm11}, \citep{rosic2013hgm}}
  \label{F:exp-B-9}
\end{minipage}%
\hfill
\begin{minipage}{0.47\textwidth}
  \centering
  \includegraphics[width=0.99\linewidth]{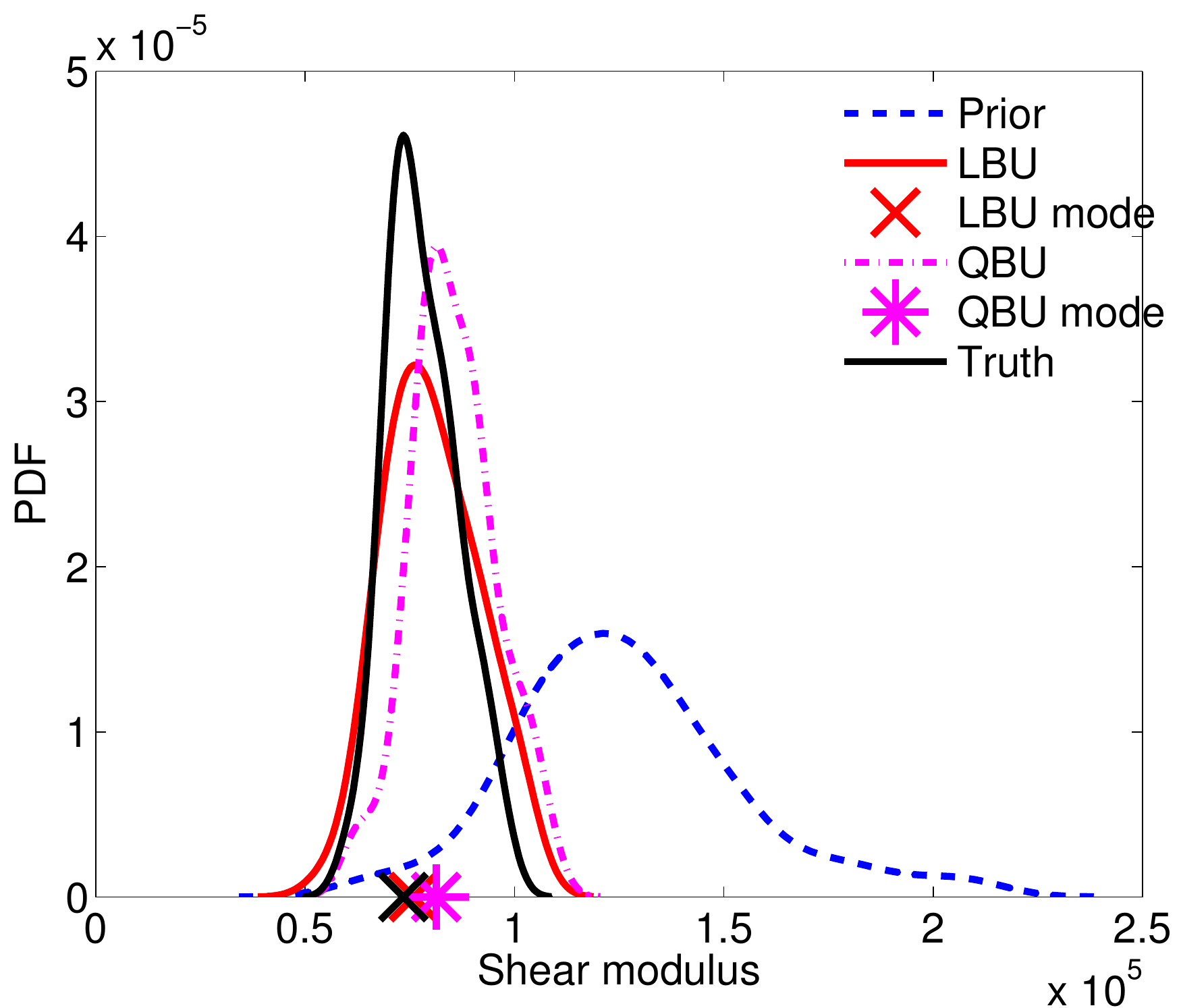}
  \caption{LBU and QBU for the shear modulus}
  \label{F:exp-B-10}
\end{minipage}
\end{figure}
As a last example we take a strongly nonlinear and also
non-smooth situation, namely elasto-plasticity with linear hardening
and large deformations and a \emph{Kirchhoff-St.\ Venant} elastic
material law \citep{BvrAkJsOpHgm11}, \citep{rosic2013hgm}.  This
example is known as \emph{Cook's membrane}, and is shown in
\refig{exp-B-9} with the undeformed mesh (initial), the deformed one
obtained by computing with average values of the elasticity and
plasticity material constants (deterministic), and finally the average
result from a stochastic forward calculation of the probabilistic
model (stochastic), which is described by a variational inequality
\citep{rosic2013hgm}.

The shear modulus $G$, a random field and not a deterministic value in
this case, has to be identified, which is made more difficult by the
non-smooth non-linearity.  In \refig{exp-B-10} one may see the `true'
distribution at one point in the domain in an unbroken black line,
with the mode --- the maximum of the pdf --- marked by a black cross
on the abscissa, whereas the prior is shown in a dotted blue line.
The pdf of the LBU is shown in an unbroken red line, with its mode
marked by a red cross, and the pdf of the QBU is shown in a broken
purple line with its mode marked by an asterisk.  Again we see a
difference between the LBU and the QBU.  But here a curious thing
happens; the mode of the LBU-posterior is actually closer to the mode
of the `truth' than the mode of the QBU-posterior.  This means that
somehow the QBU takes the prior more into account than the LBU, which
is a kind of overshooting which has been observed at other occasions.
On the other hand the pdf of the QBU is narrower --- has less
uncertainty --- than the pdf of the LBU.

%
%
%
%
%
%
%
%
%
%


%

\section{Conclusion} \label{S:concl}
The connection between inverse problems and uncertainty quantification
was shown.  An abstract model of a system was introduced, together
with a measurement operator, which provides a possibility to predict
--- in a probabilistic sense --- a measurement.  The framework chosen
is that of Bayesian analysis, where uncertain quantities are modelled
as random variables.  New information leads to an update of the
probabilistic description via Bayes's rule.

After elaborating on the --- often not well-known --- connection
between conditional probabilities as in Bayes's rule and conditional
expectation, we set out to compute and --- necessarily --- approximate
the conditional expectation.  As a polynomial approximation was
chosen, there is the choice up to which degree one should go.  The
case with up to linear terms --- the linear Bayesian update (LBU) ---
is best known and intimately connected with the well-known Kalman
filter.  We call this update the Gauss-Markov-Kalman filter.  In
addition, we show how to compute approximations of higher order, in
particular the quadratic Bayesian update (QBU).

There are several possibilities on how one may choose a numerical
realisation of these theoretical concepts, and we decided on
functional or spectral approximations.  It turns out that this
approach goes very well with recent very efficient approximation
methods building on separated or so-called low-rank tensor
approximations.

Starting with the linear Bayesian update, a series of examples of
increasing complexity is shown.  The method works well in all cases.
Some examples are then chosen to show the nonlinear or rather
quadratic Bayesian update, where we go up to quadratic terms.  A
series of experiments is chosen with different measurement operators,
which have quite a marked influence on whether the linear and
quadratic update are close to each other.

%
%
%
%
%
%





\providecommand{\bysame}{\leavevmode\hbox to3em{\hrulefill}\thinspace}
\providecommand{\MR}{\relax\ifhmode\unskip\space\fi MR }
\providecommand{\MRhref}[2]{%
  \href{http://www.ams.org/mathscinet-getitem?mr=#1}{#2}
}
\providecommand{\href}[2]{#2}


{ 
   \tiny
       \texttt{\RCSId} 
   }



\end{document}